\title[Orbital and asymptotic stability of a train of peakons for the Novikov equation]{Orbital and asymptotic stability of a train of peakons for the Novikov equation}
\author[J.M. Palacios]{Jos\'e Manuel Palacios}
\address{Institut Denis Poisson, Universit\'e de Tours, Universit\'e d'Orleans, CNRS, Parc Grandmont 37200, Tours, France}
\email{jose.palacios@etu.univ-tours.fr}
\newcommand{\be}{\begin{equation}}
\newcommand{\ee}{\end{equation}}
\newcommand{\bp}{\begin{proof}}
\newcommand{\ep}{\end{proof}}
\newcommand{\bel}{\begin{equation}\label}
\newcommand{\eeq}{\end{equation}}
\newcommand{\bea}{\begin{eqnarray}}
\newcommand{\eea}{\end{eqnarray}}
\newcommand{\bee}{\begin{eqnarray*}}
\newcommand{\eee}{\end{eqnarray*}}
\newcommand{\ben}{\begin{enumerate}}
\newcommand{\een}{\end{enumerate}}
\newcommand{\R}{\mathbb{R}}
\newcommand{\N}{\mathbb{N}}
\newcommand{\sgn}{\operatorname{sgn}}
\newtheorem{thm}{Theorem}[section]
\newtheorem{cor}[thm]{Corollary}
\newtheorem{lem}[thm]{Lemma}
\newtheorem{defn}[thm]{Definition}
\theoremstyle{remark}
\newtheorem{rem}{Remark}[section]
\definecolor{codegreen}{rgb}{0,0.6,0}
\definecolor{codegray}{rgb}{0.5,0.5,0.5}
\definecolor{codepurple}{rgb}{0.58,0,0.82}
\definecolor{backcolour}{rgb}{0.95,0.95,0.92}
\lstdefinestyle{mystyle}{
	backgroundcolor=\color{backcolour},   
	commentstyle=\color{codegreen},
	keywordstyle=\color{magenta},
	numberstyle=\tiny\color{codegray},
	stringstyle=\color{codepurple},
	basicstyle=\footnotesize,
	breakatwhitespace=false,         
	breaklines=true,                 
	captionpos=b,                    
	keepspaces=true,                 
	numbers=left,                    
	numbersep=5pt,                  
	showspaces=false,                
	showstringspaces=false,
	showtabs=false,                  
	tabsize=2
}
\numberwithin{equation}{section}
\pgfplotsset{compat=newest}
\theoremstyle{definition}
\numberwithin{ej}{section}
\begin{document}
%\begin{sf}
%==============PORTADA=====================

%==============CAMBIO DE MARGENES=====================
%\newgeometry{top=2.5cm, bottom=2.5cm, right=2.0cm, left=1.8cm}

%==============ENCABEZA/PIE DE PAGINA=====================
%\newpage
%\pagestyle{fancy}
%\fancyhf{}

%Encabezado
%\fancyhead[L]{\rightmark}
%\fancyhead[L]{\small \rm \textit{\curso}} %Izquierda
%\fancyhead[R]{\small \rm \textit{}} %Derecha

%\fancyfoot[L]{\small \rm \textit{\titulo.}} %Izquierda
%\fancyfoot[R]{\small \rm \textbf{\thepage}} %Derecha
%\fancyfoot[C]{\thepage} %Centro

\renewcommand{\sectionmark}[1]{\markright{\thesection.\ #1}}
\renewcommand{\headrulewidth}{0.5pt}
\renewcommand{\footrulewidth}{0.5pt}
\begin{abstract}
The Novikov equation is an integrable Camassa-Holm type equation with cubic nonlinearity. One of the most important features of this equation is the existence of peakon and multi-peakon solutions, i.e.  peaked traveling waves behaving as solitons. %, which are peaked traveling waves solution. 
This paper aims to prove both, the orbital and asymptotic stability of peakon trains solutions, i.e. multi-peakon solutions such that their initial configuration is increasingly ordered. Furthermore, we give an improvement of the orbital stability of a single peakon so that we can drop the non-negativity hypothesis on the momentum density. The same result also holds for the orbital stability for peakon trains, i.e. in this latter case we can also avoid assuming non-negativity of the initial momentum density. % In the asymptotic stability case we assume we are under the same assumptions than in the single peakon case. 
 Finally, as a corollary of these results together with some asymptotic formulas for the position and momenta vectors for multi-peakon solutions, we obtain the orbital and asymptotic stability for initially not well-ordered multipeakons.
\end{abstract}

\maketitle 

\section{Introduction}

\subsection{The model} This paper is concerned with the Novikov equation \begin{align}\label{novikov_eq}
u_t-u_{txx}+4u^2u_x=3uu_xu_{xx}+u^2u_{xxx}, \qquad t\in\R,\,x\in\R,
\end{align}
where $u(t)$ is a real-valued function. This equation was derived by Novikov \cite{No} in a symmetry classification of nonlocal partial differential equations with cubic nonlinearity. By using the perturbative symmetry approach \cite{MiNo}, which yields necessary conditions for a PDE to admit infinitely many symmetries, Novikov  was able to isolate equation \eqref{novikov_eq} and derive its first few symmetries. Later, he was able to find an associated scalar Lax-pair, proving the integrability of the equation. Moreover, Hone and Wang have recently found a matrix Lax-pair representation of the Novikov equation, more specifically, they have shown that equation \eqref{novikov_eq} arises as the zero curvature equation $F_t-G_x+[F,G]=0$ which is the compatibility condition for the linear system \cite{HoWa} \begin{align*}
\Psi_x=F(y,\lambda)\Psi \quad \hbox{and} \quad \Psi_t=G(y,\lambda)\Psi,
\end{align*}
where $y=u-u_{xx}$ and the matrices $F$ and $G$ are defined by \begin{align*}
F=\left(\begin{matrix}
0 & \lambda y & 1 \\ 0 & 0 & \lambda y \\ 1 & 0 & 0
\end{matrix}\right), \quad G= \left(\begin{matrix}
\tfrac{1}{3\lambda^2}-uu_x & \tfrac{1}{\lambda}u_x-\lambda u^2y & u_x^2 \\ \tfrac{1}{\lambda}u & -\tfrac{2}{3\lambda^2} & -\tfrac{1}{\lambda}u_x-\lambda u^2y 
\\ -u^2 & \tfrac{1}{\lambda}u & \tfrac{1}{3\lambda^2}+uu_x
\end{matrix}\right).
\end{align*}
Moreover, by using this matrix Lax-pair representation, Hone and Wang showed how the Novikov equation is related by a reciprocal transformation to a negative flow in the Sawada-Kotera hierarchy. 

\medskip

The Novikov equation possesses infinitely many conservation laws, among which, the most important ones are given by
\begin{align}\label{cons_e}
E(u):=\int_\R\left(u^2(t,x)+u_x^2(t,x)\right)dx \quad \hbox{and}\quad F(u):=\int \Big(u^4+2u^2u_x^2-\dfrac{1}{3}u_x^4\Big)dx.
\end{align}
Solutions of \eqref{novikov_eq} are known to satisfy several symmetry properties: shifts in space and time, i.e. if $u(t,x)$ is a solution to equation \eqref{novikov_eq} then so is $u(t+t_0,x+x_0)$, as well as space-time invertion, i.e. if $u(t,x)$ is a solution of \eqref{novikov_eq}, then $u(-t,-x)$ is another solution.

\medskip

One of the most important features of the Novikov equations is the existence of \emph{peakon} and \emph{antipeakon} solutions \cite{HoWa} which are peaked traveling waves with a discontinuous derivative at the crest. In this case, for any $c>0$ they are explicitly given by \begin{align}\label{peakons_nov_def}
\pm\varphi_{c}(x-ct)=\pm\sqrt{c}\varphi(x-ct):=\pm\sqrt{c}e^{-\vert x-ct\vert}.
\end{align}
Moreover, the Novikov equation also exhibit multi-peakons-antipeakons solutions. More precisely, for any given natural number $n\in\N$, let us denote by $\vec{q}=(q_1,...,q_n)$ and $\vec{p}=(p_1,...,p_n)$ the position and momenta vectors respectively. Then, the $n$-peaked traveling wave solution on the line is given by $
u(t,x)=\sum_{i=1}^n p_i(t)\exp(-\vert x-q_i(t)\vert)$, where $p_i$ and $q_i$ satisfy the following system of $2n$-differential equations
\begin{align}\label{multipeak}
\begin{cases}
\dfrac{dq_i}{dt}=u^2\big(t,q_i(t)\big)=\displaystyle\sum_{j,k=1}^np_jp_ke^{-\vert q_i-q_j\vert-\vert q_i-q_k\vert},
\\ \displaystyle\dfrac{dp_i}{dt}=-p_i(t)u\big(t,q_i(t)\big)u_x\big(t,q_i(t)\big)=p_i\sum_{j,k=1}^np_jp_k\sgn(q_i-q_j)e^{-\vert q_i-q_j\vert-\vert q_i-q_k\vert}.\end{cases}
\end{align}
There exist some similar expressions for periodic peakons and multipeakons solutions but we do not intend to deepen into this direction in this work. On the other hand, equation \eqref{novikov_eq} can be rewritten in a \textit{compact} form in terms of its \emph{momentum density} as
\begin{align}\label{nov_eq_y}
y_t+u^2y_x+3uu_xy=0, \quad \hbox{where} \quad y:=u-u_{xx},
\end{align}
which can be regarded as a cubic nonlinear generalization of the celebrated Camassa-Holm (CH) equation \cite{CH,FuFo}, \begin{align}\label{CH}
u_t-u_{txx}=uu_{xxx}+2u_xu_{xx}-3uu_x,\quad \hbox{equivalently} \quad y_t+uy_x+2u_xy=0,
\end{align}
or the Degasperis-Procesi (DP) equation \cite{DP}, \begin{align}\label{DP}
u_t-u_{txx}=uu_{xxx}+3u_xu_{xx}-4uu_x, \quad \hbox{equivalently} \quad y_t+uy_x+3u_xy=0.
\end{align}
It is worth noticing that the last three equations in terms of their momentum densities correspond to transport equations for $y(t)$. As a consequence, initial data with signed initial momentum density give rise to solutions with the same property. This is one of the key points to prove that smooth and sufficiently fast decaying initial data with signed initial momentum density give rise to global solutions.

\medskip

Regarding the CH and the DP equations, both can be derived as a model for the propagation of unidirectional shallow water waves over a flat bottom by writing the Green-Naghdi equations in Lie-Poisson Hamiltonian form and then making an asymptotic expansion which keeps the Hamiltonian structure \cite{AlLa,CH,CoLa,Jo}. Moreover, both of them can be written in Hamiltonian form as \[
\partial_tE'(u)=-\partial_xF'(u),
\]
where for the Camassa-Holm equation $E(u)$ and $F(u)$ are given by \[
E_{CH}(u):=\int u^2+u_x^2 \quad \hbox{and} \quad F_{CH}(u):=\int u^3+uu_x^2
\]
while for the Degasperis-Procesi equation they are given by \[
E_{DP}(u):=\int yv=\int 5v^2+4v_x^2+v_{xx}^2 \quad \hbox{and}\quad F_{DP}(u):=\int u^3,
\]
where $v:=(4-\partial_x^2)^{-1}u$. Moreover, both of them belongs to the so-called $b$-family introduced by Degasperis, Holm and Hones in \cite{DeHoHo}, \[
u_t-u_{txx}=bu_xu_{xx}+uu_{xxx}-(b+1)uu_x.
\]
In \cite{MiNo} it was shown that the $b$-family corresponds to an integrable equation only when $b=2,3$, which corresponds exactly to the CH and the DP equations respectively.

\medskip

On the other hand, the Novikov equation, as well as the CH and the DP equations, can also be written in a nonlocal form in the following way. From now on we shall denote by $p(x)$ the fundamental solution of $1-\partial_x^2$ in $\R$, that is $p:=\tfrac{1}{2}e^{-\vert x\vert}$. Then, we can rewrite \eqref{novikov_eq} as 
\begin{align}\label{nov_eq_2}
u_t+u^2u_x=-p*\left(3uu_xu_{xx}+2u_x^3+3u^2u_x\right),
\end{align}
which can be understood as a nonlocal perturbation of Burgers-type equations \[
u_t+\tfrac{1}{3}(u^3)_x=0,
\]
or more generally as a nonlinear nonlocal transport equation. This latter fact has many implications, for instance, from the blow-up criteria for transport equations we obtain that singularities are caused by the focusing of characteristics. It is worth noticing that, in order to give peakons and multi-peakons a precise meaning as (weak) solutions of the Novikov equation, it is necessary to rewrite equation \eqref{novikov_eq} in the non-local form as in \eqref{nov_eq_2}. In fact, due to their non-smoothness they can not be understood as strong solutions of the equation\footnote{Another way of defining peakons and multi-peakons as weak solutions of the Novikov equations is by rewriting \eqref{novikov_eq} in a derivative form.}. %That is why from now on we shall mostly work with equation \eqref{nov_eq_2} in a distributional sense, and somehow we shall forget about equation \eqref{novikov_eq}.}

\medskip

At this point it is clear that the Novikov equation shares many of its remarkable analytic properties with both the CH and the DP equations, as the existence of a Lax-pair, the completely integrability and the bi-Hamiltonian structure \cite{DP,HoWa}, but also all of them exhibit both existence of peaked traveling waves as well as the phenomenon of wave breaking \cite{CH,ChGuLiQu,CoLa,DP,No}. This latter one means that the wave profile remains bounded while its slope becomes unbounded. As the authors explain in \cite{ChGuLiQu}, understanding the wave-breaking mechanism not only presents fundamental importance from a mathematical point of view but also a great physical interest since it would help to provide a key-mechanism for localizing energy in conservative systems by
forming one or several small-scale spots. Finally, we remark that, unlike the Novikov equation, peakon solutions for the CH and the DP equations have a slightly different form, which is given by \begin{align}\label{ch_dp_peakon_def}
\widetilde{\varphi}_c(x-ct)=c\varphi(x-ct):=ce^{-\vert x-x_0-ct\vert}, \qquad c\in\R\setminus\{0\},\  x_0\in\R.
\end{align}
It is worth noticing that in sharp contrast with the Novikov equation, CH and DP peakons can move in both directions, left and right, just by changing the sign of $c$, while all Novikov peakons and anti-peakons move to the right.

\medskip

About the stability of these peaked solitary waves, the first proof of orbital stability was given in the Camassa-Holm case for $H^1$-perturbations assuming that their associated momentum density defines a non-negative Radon measure \cite{CoMo2}. The orbital stability for perturbations in the whole energy space $H^1(\R)$ was proved by a direct approach by Constantin and Strauss in \cite{CoSt} (see also \cite{LiLi} for a proof in the Degasperis-Procesi case). Later, following the ideas in \cite{CoSt,LiLi} Liu et al. proved the orbital stability for Novikov's peakon solutions under the additional assumption of non-negative initial momentum density \cite{LiLiQu}. In this work we shall prove that we can drop this latter hypothesis (see Theorem \ref{MT4} below).

\medskip
 
From a physical point of view, all of these peakon solutions \eqref{peakons_nov_def} and \eqref{ch_dp_peakon_def} reveal some similarities to the well-known Stokes waves of greatest height, i.e. traveling waves of maximum possible amplitude that are solutions to the governing equations for irrotational water waves \cite{Co,To}. These traveling waves (Stokes waves) are smooth everywhere except at the crest, where the lateral tangents differ. Then, it is important from both a physical and a mathematical point of view to study these types of solutions.

%\medskip

%{\color{blue}Regarding the blow-up of these equations \cite{JiNi,,,} . The main idea used in the analysis is to trace the dynamics of the blow-up quantity along the characteristics.........}
%
%\medskip
%
%{\color{blue}Regarding the behaviour of Novikov's flow on Sobolev spaces $H^s$ the situation is quite tough. Recently, in \cite{YaLiZh} Yan, Li and Zhang proved that the Novikov equation is ill-posed in $H^s(\R)$ for $s<3/2$, in the sense that the data-to-solution is not uniformly continuous. On the other hand, more recently, Himonas, Holliman and Kenig \cite{HiHoKe} succeded in constructing an explicit peakon-antipeakon  solution so that its peak and antipeak collides. They used this explicit peakon-antipeakon to prove that for $\tfrac{5}{4}<s<\tfrac{3}{2}$ the data-to-solution is not even continuous, while if $s<\tfrac{5}{4}$ the solution is not unique. If $s=\tfrac{5}{4}$ then either continuity or uniqueness fails. It is worth to point out that this construction is not direct at all since, unlike the CH and the DP cases, all peakon and antipeakon solutions move in the same direction. Therefore collision may only happen if the wave that follows moves faster than the one ahead of it. Thus, in order to prove this, Himonas \emph{et. al.} had to solve the full system of four nonlinear differential equation defined by \eqref{multipeak} for $n = 2$. Nevertheless, as we shall see in Section \ref{preliminaries}, under some stronger hypothesis we do have global existence and uniqueness in certain class of functions.}

\subsection{Initial data space}

Before stating our main results we need to introduce the functional spaces where our initial data shall belong. Following the ideas of \cite{CoMo,EM1,EM2,Mo,Pa} we define  
\[
Y:=\big\{u\in H^1(\R): \ u-u_{xx}\in\mathcal{M}_b\big\},
\]
where $\mathcal{M}_b$ denotes the space of Radon measures  with finite total variation on $\R$. Moreover, from now on we shall denote by $Y_+$ the subspace defined by $Y_+:=\{u\in Y: \ u-u_{xx}\in\mathcal{M}_{b}^+\}$, where $\mathcal{M}_{b}^+$ denotes the space of non-negative finite Radon measures on $\R$. A crucial remark in what follows is that, for any function $v\in C_0^\infty(\R)$ we have
\begin{align}\label{positive_mom_1}
v(x)&=\dfrac{1}{2}\int_{-\infty}^x e^{x'-x}(v-v_{xx})(x')dx'+\dfrac{1}{2}\int_x^\infty e^{x-x'}(v-v_{xx})(x')dx'
\end{align}
and \begin{align}\label{positive_mom_2}
v_x(x)&=-\dfrac{1}{2}\int_{-\infty}^x e^{x'-x}(v-v_{xx})(x')dx'+\dfrac{1}{2}\int_x^\infty e^{x-x'}(v-v_{xx})(x')dx'
\end{align}
Therefore, if $v-v_{xx}\geq 0$ on $\R$ we conclude that $\vert v_x\vert\leq v$. Thus, by density of $C_0^\infty(\R)$ in $Y$, we deduce the same property for functions $v\in Y_+$.

\begin{rem}
We recall the following standard estimate which shall be useful in the sequel:
\[
\Vert u\Vert_{W^{1,1}}=\Vert p*(u- u_{xx})\Vert_{W^{1,1}}\lesssim \Vert u-u_{xx}\Vert_{\mathcal{M}},
\]
and hence it also holds that \[
\Vert u_{xx}\Vert_{\mathcal{M}}\leq \Vert u\Vert_{L^1}+\Vert u-u_{xx}\Vert_{\mathcal{M}}.
\]
Thus, we have $
Y(\R)\hookrightarrow \left\{u\in W^{1,1}(\R): \, u_x\in \mathrm{BV}(\R)\right\}$,
where $\mathrm{BV}(\R)$ denotes the space of functions with bounded variation. 
\end{rem}

\subsection{Main results}
As we mentioned before, in this work we intend to address both, the orbital and asymptotic stability problems for a train of peakons.

\subsubsection{Orbital stability in the energy space}

Our first result is an improvement of the orbital stability property for the single peakon solution. Indeed, by some slight improvements and modifications of the proof in \cite{LiLiQu}, we shall show that the sign assumption on the momentum density is artificial, and hence it can be removed. This is rather an observation regarding the fact that the proof follows the one in \cite{CoSt} for the Camassa-Holm equation.
\begin{thm}[Orbital stability of peakons in the energy space]\label{MT4}
Let $c>0$ be fixed. There exists $ 0<\varepsilon^\star\ll \min\{1,\sqrt{c}\}$ small enough such that if \[
u\in L^\infty((-T,T), H^1(\R)\cap W^{1,4}(\R)),
\]
is a solution to the Novikov equation \eqref{nov_eq_2} emanating from an initial data $u_0\in H^1(\R)\cap W^{1,4}(\R)$ satisfying \begin{align}\label{initial_cond_hyp_peakon}
\left\Vert u_0-\varphi_{c}\right\Vert_{H^1}+\Vert u_{0,x}-\varphi_c'\Vert_{L^4}\leq \varepsilon^4 ,\quad \hbox{for some}\quad 0<\varepsilon<\varepsilon^\star,
\end{align}
such that $E(\cdot)$ and $F(\cdot)$ are conserved along the trajectory, then, the following estimate holds: \[
\quad \sup_{t\in[-T,T]}\Vert u(t)-\varphi_c(\cdot-\xi(t))\Vert_{H^1}\leq 2c^{3/8}\big(4+\textbf{c}\big)\varepsilon, \ \quad \textbf{c}:=\max\{1,c^{3/8}\},
\]
where $\xi(t)\in\R$ is any point where the function $u(t,\cdot)$ attains its maximum.
\end{thm}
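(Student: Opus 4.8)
The plan is to follow the variational strategy of Constantin--Strauss, adapted to the Novikov functionals $E$ and $F$ of \eqref{cons_e}, and to isolate precisely where the sign condition $u-u_{xx}\ge0$ enters the argument of \cite{LiLiQu} so that it can be dispensed with. First I would record the conserved quantities on the peakon family: a profile of maximal height $M$, namely $Me^{-|\cdot|}$, satisfies $E=2M^2$ and $F=\tfrac{4}{3}M^4$; in particular $E(\varphi_c)=2c$ and $F(\varphi_c)=\tfrac{4}{3}c^2$, since $\varphi_c$ has height $\sqrt c$. Because $E$ and $F$ are conserved and $u_0$ is $\varepsilon^4$-close to $\varphi_c$ in $H^1\cap W^{1,4}$, continuity of $E$ on $H^1$ and of $F$ on $W^{1,4}$ gives $|E(u(t))-2c|+|F(u(t))-\tfrac{4}{3}c^2|\lesssim\varepsilon^4$ uniformly on $[-T,T]$; this is the role of the $W^{1,4}$ hypothesis, namely to make the quartic term of $F$ finite and continuous.

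Second, I would reduce everything to the control of the height $M(t):=\max_x u(t,x)=u(t,\xi(t))$. For any point $\xi$ where $u(t,\cdot)$ attains its maximum $M$, the peakon $\varphi_{M^2}(\cdot-\xi)=Me^{-|\cdot-\xi|}$ satisfies $\varphi_{M^2}(\cdot-\xi)-\partial_x^2\varphi_{M^2}(\cdot-\xi)=2M\delta_\xi$, so pairing in $H^1$ and using $u(\xi)=M$ yields the exact identity $\|u(t)-\varphi_{M^2}(\cdot-\xi)\|_{H^1}^2=E(u(t))-2M^2$. Combined with $\|\varphi_{M^2}-\varphi_c\|_{H^1}=\sqrt2\,|M-\sqrt c|$ and the triangle inequality, the claimed bound on $\|u(t)-\varphi_c(\cdot-\xi(t))\|_{H^1}$ follows as soon as $E(u(t))-2M^2$ and $|M-\sqrt c|$ are shown to be $O(\varepsilon^2)$. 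Existence of a maximizer $\xi(t)$ is ensured by $H^1(\R)\hookrightarrow C_0(\R)$.

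Third, and this is the crux, I would prove the sharp Novikov analogue of the Constantin--Strauss inequality,
\[
F(u)\ \le\ \tfrac{4}{3}\,M^2 E(u)-\tfrac{4}{3}\,M^4,\qquad M=\max_x u,
\]
valid for every $u\in H^1(\R)\cap W^{1,4}(\R)$, with equality exactly on the peakons. The argument introduces the shape-deviation $w$, equal to $u-u_x$ for $x<\xi$ and to $u+u_x$ for $x>\xi$, which vanishes identically on a peakon centred at $\xi$ and satisfies $\int_\R w^2=E(u)-2M^2$. Splitting the integrals at $\xi$ and using the exact derivative $(u^2)_x=2uu_x$ produces a manifestly nonnegative quadratic form in $w$ together with quartic remainder terms. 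The main obstacle is to handle these remainders \emph{without} a sign condition: in \cite{LiLiQu} they are disposed of through $u-u_{xx}\ge0$, which by \eqref{positive_mom_1}--\eqref{positive_mom_2} forces the pointwise bound $|u_x|\le u$ and hence $u_x^4\le u^2u_x^2$. My aim is to show this is never actually needed. The term $-\tfrac{1}{3}\int u_x^4$ in $F$ is essential (it is nonzero already on the peakon and cannot be discarded), and I would verify by a direct regrouping that, keeping this term, the whole expression $\tfrac{4}{3}M^2E(u)-\tfrac{4}{3}M^4-F(u)$ is sign-definite for all $u\in H^1\cap W^{1,4}$, so the pointwise bound $|u_x|\le u$ drops out of the proof and the sign hypothesis is exposed as artificial.

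Finally I would exploit the double-root structure to close the argument pointwise in $t$. Writing $s=M^2$, the function $G(s):=\tfrac{4}{3}sE(u)-\tfrac{4}{3}s^2-F(u)=-\tfrac{4}{3}\big(s-\tfrac{1}{2}E(u)\big)^2+\big(\tfrac{1}{3}E(u)^2-F(u)\big)$ is nonnegative by the inequality above, and at the exact peakon values it reduces to $-\tfrac{4}{3}(s-c)^2$. Since the conserved quantities lie within $O(\varepsilon^4)$ of those values, the residual $\tfrac{1}{3}E(u)^2-F(u)$ is $O(\varepsilon^4)$, so $G(s)\ge0$ forces $\tfrac{4}{3}(M^2-c)^2\lesssim\varepsilon^4$, whence $|M^2-c|\lesssim\varepsilon^2$ and $|M-\sqrt c|\lesssim\varepsilon^2$ at every time, with no continuity-in-$t$ argument needed. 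Feeding this back, $E(u(t))-2M^2=(E(u(t))-2c)+2(c-M^2)\lesssim\varepsilon^2$, so $\|u(t)-\varphi_{M^2}(\cdot-\xi(t))\|_{H^1}\lesssim\varepsilon$ while $\|\varphi_{M^2}-\varphi_c\|_{H^1}\lesssim\varepsilon^2$, and the triangle inequality gives the stated estimate after tracking the $c$-dependence of all constants, which is what produces the factor $c^{3/8}(4+\mathbf{c})$. It is precisely this perfect-square (double-root) structure that upgrades closeness at $t=0$ to closeness for all $t$, and it is the reason the $\varepsilon^4$ hypothesis yields an $\varepsilon$ conclusion.
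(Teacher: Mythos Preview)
Your proposal is correct and follows essentially the same route as the paper. Both arguments rest on three ingredients: an identity linking $\|u-\varphi_c(\cdot-\xi)\|_{H^1}^2$ to $E(u)$ and the height $M$; the sharp inequality $F(u)\le\tfrac43M^2E(u)-\tfrac43M^4$; and a double-root/polynomial argument turning closeness of $(E,F)$ to $(2c,\tfrac43c^2)$ into $|M-\sqrt c|=O(\varepsilon^2)$. Your variants---going through the intermediate peakon $\varphi_{M^2}$ and the triangle inequality instead of the paper's direct identity $\|v-\varphi_c(\cdot-\xi)\|_{H^1}^2=E(v)-2c-4\sqrt c\,(M-\sqrt c)$, and writing the polynomial as a quadratic in $s=M^2$ rather than the quartic $(M+\sqrt c)^2(M-\sqrt c)^2$---are algebraically equivalent and cost nothing.

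The only place where you leave real content undeveloped is your third step, which is precisely where the sign hypothesis of \cite{LiLiQu} entered and where the paper's improvement lies. You introduce $w=u\mp u_x$ and announce ``a direct regrouping'', but do not produce it. The paper's device is to pair your $g:=w$ with a weight
\[
h(x)=u^2\mp\tfrac23 u u_x-\tfrac13 u_x^2\quad(\text{sign matching }g),
\]
for which one computes the exact identity $\int hg^2=F(u)-\tfrac43M^4$. The elementary pointwise bound $h\le\tfrac43u^2$ (equivalent to $(u\mp u_x)^2\ge0$, no sign on the momentum needed) then gives
\[
F(u)-\tfrac43M^4=\int hg^2\le \tfrac43\int u^2g^2\le \tfrac43M^2\int g^2=\tfrac43M^2\bigl(E(u)-2M^2\bigr),
\]
which is the inequality you want. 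This is the ``regrouping'' you were looking for; note the bound $|u_x|\le u$ is never invoked. One small point worth making explicit in both your write-up and the paper's: the last inequality uses $u^2\le M^2$, i.e.\ $-M\le u\le M$, which is not automatic for arbitrary $v\in H^1\cap W^{1,4}$ (the antipeakon $-\varphi_c$ has $E=2c$, $F=\tfrac43c^2$ but $M=0$). In the theorem this is harmless, since $\|u(t)\|_{L^\infty}^2\le\tfrac12E(u_0)=c+O(\varepsilon^4)$ while $M\ge0$ and, a posteriori, $M^2\approx c$.
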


\begin{rem}
It is worth noticing that, in contrast to the Camassa-Holm case, continuity with respect to time of the solution is not needed here. Specifically, we only need the quantities $E(\cdot)$ and $ F(\cdot) $ to be conserved. Indeed,  for any $ v\in H^1\cap W^{1,4}$, it holds that \[
\Vert v-\varphi\Vert_{H^1} \to 0 \quad\hbox{as}\quad  \vert E(v)-E(\varphi)\vert+\vert F(v)-F(\varphi)\vert \to 0, 
\]
whereas the analogous result $\Vert v-\widetilde\varphi\Vert_{H^1} \to 0 $ as $ \vert E_{CH}(v)-E_{CH}(\widetilde\varphi)\vert +\vert F_{CH}(v)-F_{CH}(\widetilde\varphi)\vert \to 0 $ only holds if we additionally assume that $ v$ belongs to some $ L^\infty$-neighborhood of $ \widetilde\varphi$. 
\end{rem}

Once we have proved the orbital stability of a single peakon in some functional space we may consider the orbital stability problem for a train of peakons under the same hypothesis. In this regard, we obtain the analogous result to the last theorem for peakon train solutions of the Novikov equation.

\begin{thm}[Orbital stability of a train of peakons in the energy space]\label{MT2}
Let $c_1,...c_n$ be $n$ real numbers such that $0<c_1<...<c_n$. There exists $\varepsilon^\star>0$ small enough, $L_0>0$ and an universal constant $C>0$ such that if for some $0<T\leq +\infty$, \[
u\in C([0,T),H^1(\R))\cap L^\infty([0,T),W^{1,4}(\R))
\]
is a solution to the Novikov equation \eqref{nov_eq_2} emanating from an initial data $u_0\in H^1(\R)\cap W^{1,4}(\R)$ such that $E(\cdot)$ and $F(\cdot)$ are conserved along the trajectory and satisfying \begin{align}\label{initial_cond_hyp_train}
\left\Vert u_0-\sum_{i=1}^n \varphi_{c_i}(\cdot-z_i)\right\Vert_{H^1}+\left\Vert u_{0,x}-\sum_{i=1}^n \varphi_{c_i}'(\cdot-z_i)\right\Vert_{L^4}\leq \varepsilon^4 ,\quad \hbox{with}\quad 0<\varepsilon<\varepsilon^\star,
\end{align}
for some array of numbers $\{z_i\}_{i=1}^n\subset\R$ with $z_{i+1}-z_i\geq L$ where $L\geq L_0$, then the following holds: There exist $C^1$ functions $x_1(t),...,x_n(t):[0,T)\to\R$ such that \begin{align}\label{orb_concl}
\sup_{t\in[0,T)}\left\Vert u(t,\cdot)-\sum_{i=1}^n\varphi_{c_i}(\cdot-x_i(t))\right\Vert_{H^1}\lesssim \varepsilon+L^{-1/8}.
\end{align}
Additionally, we have $x_{i+1}(t)-x_i(t)>\tfrac{L}{2}$ for all $t\in[0,T)$.
\end{thm}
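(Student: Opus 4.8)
The plan is to combine a bootstrap (continuity) argument with localized almost-conservation estimates for the two conserved quantities $E$ and $F$, thereby reducing the $n$-peakon problem to $n$ essentially independent copies of the single-peakon estimate of Theorem~\ref{MT4}. This is the El~Dika--Molinet strategy from the Camassa--Holm setting, but here the cubic nonlocal structure of \eqref{nov_eq_2} and the quartic term $\int u_x^4$ inside $F$ will force the use of the $W^{1,4}$-control already built into \eqref{initial_cond_hyp_train}. Concretely, I would fix a large constant $A$ and let $T^\star$ be the supremum of times $t\in[0,T)$ for which one can find continuous centers $x_1(s)<\dots<x_n(s)$ with $x_{i+1}(s)-x_i(s)>L/2$ and $\big\|u(s)-\sum_i\varphi_{c_i}(\cdot-x_i(s))\big\|_{H^1}\le A(\varepsilon+L^{-1/8})$ for all $s\le t$. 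Since the data is $\varepsilon^4$-close to $\sum_i\varphi_{c_i}(\cdot-z_i)$, taking $x_i(0)$ near $z_i$ gives $T^\star>0$, and the whole argument aims to prove on $[0,T^\star)$ the strictly better bound $\tfrac{A}{2}(\varepsilon+L^{-1/8})$; by continuity of $t\mapsto u(t)$ in $H^1$ this forbids $T^\star<T$, so $T^\star=T$.

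To track the bumps I would define each $x_i(t)$ as a point where $u(t,\cdot)$ attains a local maximum inside the moving window attached to the $i$-th peakon, exactly mirroring the choice of $\xi(t)$ in Theorem~\ref{MT4}. Continuity in $t$ is inherited from $u\in C([0,T),H^1)$ together with $H^1(\R)\hookrightarrow C(\R)$, while the $C^1$ regularity is recovered a posteriori from the evolution law of these maxima, once the solution is known to be uniformly close to a single translated peakon near $x_i(t)$.

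The heart of the proof, and the step I expect to be the main obstacle, is an almost-monotonicity property. I would introduce smooth weights $\psi_i(t,x)$ passing from $0$ to the left of the $i$-th valley to $1$ to its right, with the transition layer centered at a point moving at an intermediate speed $\sigma_i\in(c_{i-1},c_i)$. Differentiating the localized functionals $\int(u^2+u_x^2)\psi_i$ and $\int\!\big(u^4+2u^2u_x^2-\tfrac13 u_x^4\big)\psi_i$ along the flow via \eqref{nov_eq_2}, the leading transport terms should carry a favorable sign because the faster peakons sit ahead, whereas the remaining nonlocal cubic contributions and the weight-derivative terms must be shown to be small in $L$, uniformly on $[0,T^\star)$. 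The genuine difficulty is that $u$ lives only in the energy space: the monotonicity has to be proved first on smooth approximations and then passed to the limit, and the long-range tails of the nonlocal operator $p*(\cdot)$ and of the quartic term have to be estimated against the slowly varying weights, which is precisely where the $W^{1,4}$-bound (propagated by conservation of $F$) enters.

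Granting the almost-conservation of the localized $E$- and $F$-pieces, I would finally write $E(u(t))$ and $F(u(t))$ as sums over $i$ of quantities localized near $x_i(t)$, each staying $L$-close to $E(\varphi_{c_i})$ and $F(\varphi_{c_i})$ up to a defect that is polynomially small in $L$ (of order $L^{-1/2}$ when the transition layer of $\psi_i$ has width $\sim L$, so that $\|\psi_i'\|_{L^2}\sim L^{-1/2}$). Applying the single-peakon mechanism of Theorem~\ref{MT4} to each localized piece then controls $\|u(t)-\varphi_{c_i}(\cdot-x_i(t))\|_{H^1}$ on the $i$-th window; and because that estimate bounds the $H^1$-distance essentially by the fourth root of the energy/functional defect (compare the scaling $\varepsilon^4\mapsto\varepsilon$ in \eqref{initial_cond_hyp_peakon}), an $L^{-1/2}$ defect produces exactly the $L^{-1/8}$ appearing in \eqref{orb_concl}. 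Summing over $i$ yields the global bound with a constant strictly smaller than $A$, which closes the bootstrap, gives $T^\star=T$, and simultaneously propagates the separation $x_{i+1}(t)-x_i(t)>L/2$; the $C^1$ regularity of the $x_i$ is then read off from the near-single-peakon behaviour on each window.
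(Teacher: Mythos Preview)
Your overall architecture---bootstrap in the tubular neighborhood, localized almost-monotonicity, and reduction to the single-peakon estimate on each window---matches the paper's strategy, and your explanation of the exponent $L^{-1/8}$ is correct. Two points, however, diverge from what the paper actually does and are worth flagging.

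First, the $C^1$ centers are \emph{not} obtained directly from the local maxima. The paper introduces a separate family $\widetilde{x}_i(t)$ via a modulation argument: one imposes orthogonality conditions $\int v\,(\rho_{n_0}*\varphi_{c_i}')(\cdot-\widetilde{x}_i)=0$ and applies the Implicit Function Theorem to get $C^1$-in-time parameters. The local maxima $x_i(t)$ are then located \emph{a posteriori} inside windows around the $\widetilde{x}_i(t)$, and it is the modulation (plus the equation satisfied by $v$) that yields the speed estimate $|\dot{\widetilde{x}}_i-c_i|\lesssim\sqrt{\alpha}$ driving the separation $x_{i+1}-x_i>L/2$. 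Your proposed route---extracting $C^1$ regularity of the maxima ``from the evolution law''---is not made precise, and for solutions that are only $C_tH^1_x$ the location of a pointwise maximum need not be differentiable in $t$; the modulation detour is what actually works.

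Second, the paper does \emph{not} prove almost-monotonicity for the localized $F$-functional, only for the localized energy $\mathcal{I}_{i,K}(t)=\int(u^2+u_x^2)\Psi_{i,K}$. Crucially, the individual $E_i=\mathcal{I}_i-\mathcal{I}_{i+1}$ are not themselves almost-conserved; rather, the partial sums $\mathcal{I}_i$ are almost non-increasing, and the weighted sum $\sum M_i^2\big(E_i(u(t))-E_i(u_0)\big)$ is controlled by an Abel summation that exploits the ordering $M_1<\dots<M_n$. For $F$, no monotonicity is needed at all: since $\sum_i\Phi_i\equiv1$, one has $\sum_iF_i(u(t))=F(u(t))=F(u_0)$ by conservation, so the $F$-defect is bounded directly from the initial data via the $W^{1,4}$-closeness in \eqref{initial_cond_hyp_train}. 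Your sketch would go through once these two mechanisms replace the vaguer ``each localized piece stays $L$-close'' claim.
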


\begin{rem}
Notice that the local existence assumptions of Theorems \ref{MT4} and \ref{MT2} are satisfied, in particular, for initial data in $Y(\R)$ (see Theorem \ref{theorem_lwp} below).
\end{rem}
On the other hand, in \cite{HoLuSz} Hones et al. studied  the asymptotic behavior of multipeakon solutions in the case when no antipeakons are allow. In particular, the limits of $p_i(t)$ and $\dot{q}_i(t)$ in \eqref{multipeak} as $t$ goes to $+\infty$ are determined. As a corollary of the previous theorem together with the study made in \cite{HoLuSz} we obtain the orbital stability of the whole manifold \[
\mathcal{N}:=\left\{v(x)=\sum_{i=1}^n p_ie^{-\vert x-q_i\vert}: \ p_1,...,p_n\in \R_+, \ q_1<...<q_n\right\}.
\]
In concrete, we have the following result:
\begin{cor}[Orbital stability of not-well ordered multi-peakons]\label{cor_MT2}
Let $p_1^0,...,p_n^0$ be $n$ positive real numbers and $q_1^0<...<q_n^0$. For any $\alpha>0$ and any $\delta>0$ there exists $\varepsilon>0$ such that for any initial data $u_0\in Y_+(\R)$ satisfying \begin{align}\label{hyp_cor_orb}
\left\Vert u_0-\sum_{i=1}^np_i^0\exp\big(-\vert \cdot-q_i^0\vert\big)\right\Vert_{H^1}\leq \varepsilon \quad \hbox{with}\quad \Vert y_0\Vert_{\mathcal{M}}\leq \alpha,
\end{align}
then the following holds: For all time $t\in\R$ we have \begin{align}\label{first_part_cor}
\inf_{\vec{q}\in\R^n,\,\vec{p}\in\R_+^n}\left\Vert u(t,\cdot)-\sum_{i=1}^np_i\exp\big(-\vert \cdot-q_i\vert\big)\right\Vert_{H^1}\leq \delta.
\end{align}
Moreover, there exists $T>0$ sufficiently large such that \[
\hbox{for all }\, t\geq T, \quad \inf_{\vec{q}\in\mathcal{G}}\left\Vert u(t,\cdot)-\sum_{i=1}^n\lambda_i\exp\big(-\vert \cdot-q_i\vert\big)\right\Vert_{H^1}\leq \delta,
\]
and \[
\hbox{for all }\, t\leq- T, \quad \inf_{\vec{q}\in\mathcal{G}}\left\Vert u(t,\cdot)-\sum_{i=1}^n\lambda_{n+1-i}\exp\big(-\vert \cdot-q_i\vert\big)\right\Vert_{H^1}\leq \delta.
\]
where $\mathcal{G}:=\{\vec{q}\in\R^n: \ q_1<...<q_n\}$ and the parameters $0<\lambda_1<...<\lambda_n$ are the square roots of the eigenvalues of the matrix $TPEP$, where: \[
P:=\mathrm{diag}\left(p_1^0,...,p_n^0\right), \quad E:=\left(e^{-\vert q_i-q_j\vert}\right)_{i,j=1}^n \ \hbox{ and } \,\ T:=\left(1+\mathrm{sgn}(j-k)\right)_{i,j=1}^n.
\]
\end{cor}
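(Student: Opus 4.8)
The plan is to derive Corollary \ref{cor_MT2} as a consequence of the orbital stability Theorem \ref{MT2} combined with the asymptotic analysis of the multipeakon dynamical system \eqref{multipeak} carried out in \cite{HoLuSz}. The essential idea is that even though the initial profile in \eqref{hyp_cor_orb} need not be increasingly ordered in the sense required by Theorem \ref{MT2}, the pure-peakon flow \eqref{multipeak} (no antipeakons, guaranteed by $u_0 \in Y_+(\R)$ so that the momentum density stays non-negative and the $p_i$ remain positive for all time) eventually \emph{sorts itself} into a well-ordered configuration as $t \to +\infty$, with the limiting amplitudes $\lambda_i$ given by the square roots of the eigenvalues of $TPEP$, and into the reverse ordering as $t \to -\infty$. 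So the proof is really a matching argument: first use the asymptotic formulas to produce, at some large time, a profile that \emph{is} well-separated and ordered, then invoke Theorem \ref{MT2} from that time onward.

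First I would record the structural facts I need from \cite{HoLuSz}. For the pure peakon flow with $0 < \lambda_1 < \dots < \lambda_n$ the square roots of the eigenvalues of $TPEP$, one has asymptotics of the form $p_i(t) \to \lambda_{\sigma(i)}$ and $\dot q_i(t) \to \lambda_{\sigma(i)}^2$ as $t \to +\infty$ with the peaks asymptotically separating linearly, and the mirror-image statement as $t \to -\infty$ with the reversed permutation (this is exactly the invariance under space-time inversion $(t,x)\mapsto(-t,-x)$ noted in the introduction, which also explains the index reversal $\lambda_{n+1-i}$ in the negative-time statement). The point is that, given $\delta>0$, there exists a time $T_0>0$ so that the exact $n$-peakon solution $P(t,x)=\sum_i p_i(t)e^{-|x-q_i(t)|}$ emanating from the reference data satisfies, for $t\geq T_0$, both $q_{i+1}(t)-q_i(t)\geq L$ for an $L\geq L_0$ as large as I wish and $\big\|P(t,\cdot)-\sum_i \lambda_i e^{-|x-q_i(t)|}\big\|_{H^1}$ arbitrarily small.

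Next, the continuity-in-time of the full solution $u$ together with well-posedness in $Y_+(\R)$ (Theorem \ref{theorem_lwp}) and the smallness \eqref{hyp_cor_orb} lets me propagate closeness: for $\varepsilon$ small the solution $u(t)$ stays $H^1$-close to the exact multipeakon $P(t)$ on compact time intervals, so at $t=T_0$ the profile $u(T_0)$ is within $\varepsilon^4$ (in the $H^1 + \dot W^{1,4}$ norm of \eqref{initial_cond_hyp_train}) of a well-ordered, well-separated sum of peakons with speeds $c_i=\lambda_i^2$ and separation $L$. I then apply Theorem \ref{MT2} with initial time $T_0$ to obtain \eqref{first_part_cor} for all $t\geq T_0$, and the bound $\lesssim \delta$ follows after choosing $\varepsilon$ and $L^{-1/8}$ small via \eqref{orb_concl}. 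The claim for $t\leq -T$ is obtained by the identical argument applied to the inversion-transformed solution $u(-t,-x)$, and the statement \eqref{first_part_cor} on the whole of $\R$ follows by covering the remaining compact time interval $[-T_0,T_0]$ with the propagation-of-closeness estimate, which directly gives proximity to the manifold $\mathcal{N}$ of ordered peakon sums.

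The main obstacle I anticipate is the \emph{quantitative} matching between the smallness parameter $\varepsilon$ in \eqref{hyp_cor_orb} and the threshold $\varepsilon^\star$ and minimal separation $L_0$ demanded by Theorem \ref{MT2}: one must choose $T_0$ large enough that the \emph{exact} multipeakon is sufficiently ordered and separated, but then $\varepsilon$ must be chosen small enough (depending on $T_0$) that the perturbation has not grown beyond $\varepsilon^\star$ on the interval $[0,T_0]$. Because the growth of the perturbation over $[0,T_0]$ is controlled only by continuous dependence, whose constants degrade in $T_0$, this is a genuine order-of-quantifiers issue that has to be handled carefully; the non-negativity $u_0\in Y_+$ is what prevents antipeakon formation and collisions, keeping the reference multipeakon globally defined and its asymptotics valid, and is therefore essential to closing the argument.
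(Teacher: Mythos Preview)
Your proposal is correct and follows essentially the same route as the paper's own proof: use the asymptotics of the exact multipeakon from \cite{HoLuSz} to find a large time $T$ at which the reference profile is well-ordered and well-separated, invoke continuous dependence on the data to propagate $H^1$-closeness over $[-T,T]$, and then apply Theorem \ref{MT2} (together with the space-time inversion symmetry for $t\leq -T$) from that time onward. The only minor discrepancy is that the paper appeals to the continuity statement in Theorem \ref{theorem_gwp} (item 2) rather than Theorem \ref{theorem_lwp}, since the former gives the needed $H^1$-continuity for $Y_+$ data on arbitrary compact time intervals.
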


\subsubsection{Asymptotic stability results}
About the asymptotic stability of peaked traveling waves, the case of a single peakon have recently been addressed and proved by the author by a proof based in a rigidity property of the Novikov equation (see \cite{Pa} Theorems $1.2$ and $1.3$).
\begin{thm}[\cite{Pa}]\label{AS_single_peakon}
Let $c>0$ be fixed. There exists an universal constant $1\gg\varepsilon^\star>0$ such that for any $\beta\in(0,c)$ and initial data $u_0\in Y_+$ satisfying 
\begin{align}\label{AS_smallness_hip}
\Vert u_0-\varphi_c\Vert_{H^1}\leq \varepsilon^\star\Big(\tfrac{\beta}{c}\Big)^8,
\end{align}
then the following property holds: There exists $c^*>0$ with $\vert c-c^*\vert\ll c$ and a $C^1$ function $x:\R\to\R$ satisfying $\dot{x}(t)\to c^*$ as $t\to+\infty$
\[
u(t,\cdot+x(t))\rightharpoonup \varphi_{c^*} \ \hbox{ in }\ H^1(\R).
\]
where\footnote{By this we mean that $u\in C(\R,H^1(\R))$ with $y\in C_{ti}(\R,\mathcal{M}_b(\R))$. See definition \ref{def_c_ti} below.} $u\in C_{ti}(\R,Y_+)$ is the global weak solution to equation \eqref{nov_eq_2} associated to $u_0$. Moreover, for any $z\in\R$ the following strong convergence holds
\begin{align}\label{AS_strong_h1_conv_peakon_mthm}
\lim_{t\to+\infty}\Vert u(t)-\varphi_{c^*}(\cdot-x(t))\Vert_{H^1((-\infty,z)\cup(\beta t,+\infty))}=0.
\end{align}
\end{thm}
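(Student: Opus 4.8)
The plan is to follow the Martel--Merle--Molinet scheme for asymptotic stability of solitary waves, adapted to the peakon setting. First I would invoke the orbital stability result (Theorem \ref{MT4}), which applies to data in $Y_+$ since $Y_+\hookrightarrow H^1\cap W^{1,4}$ (using $|u_x|\le u$ together with $u\in H^1\subset L^\infty$). This guarantees that, under the smallness hypothesis \eqref{AS_smallness_hip}, the solution stays uniformly close to the peakon orbit for all times and produces a modulation parameter $x(t)$ — conveniently taken as a point where $u(t,\cdot)$ attains its maximum, as in Theorem \ref{MT4} — which is $C^1$ in $t$ and for which $\Vert u(t,\cdot+x(t))-\varphi_c\Vert_{H^1}$ remains controlled by $(\beta/c)^8$ throughout. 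The particular threshold $\varepsilon^\star(\beta/c)^8$ is dictated by the need to keep the constants appearing in the monotonicity estimates below (which degenerate as $\beta\to c$) under control.

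The heart of the argument is an almost-monotonicity property for localized versions of the conserved quantities $E$ and $F$. Fix a smooth nondecreasing cutoff $\Psi$ with $\Psi\to 0$ at $-\infty$ and $\Psi\to 1$ at $+\infty$, pick a speed $\sigma\in(\beta,c)$, and introduce
\[
I_{\sigma}(t):=\int_\R\big(u^2+u_x^2\big)(t,x)\,\Psi\big(x-x(t_0)-\sigma(t-t_0)\big)\,dx,
\]
together with an analogous localized version of $F$. Using the nonlocal form \eqref{nov_eq_2}, integration by parts, and the pointwise bound $|u_x|\le u$ valid for $Y_+$-solutions, I would show that these quantities are almost monotone in $t$ up to exponentially small errors, precisely because the cutoff front travels strictly slower than the peakon but strictly faster than the radiation ($\beta<\sigma<c$). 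This is the step in which the sign of the momentum density and the specific algebraic structure of the cubic Novikov nonlinearity are indispensable.

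Next, by weak-$H^1$ compactness, along any sequence $t_n\to+\infty$ the translated profiles $u(t_n,\cdot+x(t_n))$ converge weakly to some limit $\psi\in Y_+$, and running the Novikov flow from $\psi$ yields an asymptotic solution $\tilde u\in C_{ti}(\R,Y_+)$. The almost-monotonicity above, combined with the conservation of the global quantities $E$ and $F$, forces $\tilde u$ to be \emph{$Y$-almost-localized}: its momentum density $\tilde y=\tilde u-\tilde u_{xx}$ stays uniformly concentrated in space for all times, with no mass escaping to $\pm\infty$. Intuitively, any energy not locked to the peakon is shed to the left at speed below $\beta$ and therefore cannot survive in the limit object.

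The decisive and hardest step is a rigidity (Liouville-type) theorem: an almost-localized $Y_+$-solution remaining close to the peakon family must be exactly a peakon $\varphi_{c^*}$. I would prove this by exploiting the transport structure of the momentum equation \eqref{nov_eq_y}, $y_t+u^2y_x+3uu_xy=0$: for a non-negative, almost-localized density the transport dynamics, together with the critical character of the peakon for the functional built from $E$ and $F$ that underlies the orbital stability, leaves no room for any profile other than a single Dirac mass for $\tilde y$, i.e. $\psi=\varphi_{c^*}$. Since the limit is then independent of the sequence $(t_n)$, one obtains $u(t,\cdot+x(t))\rightharpoonup\varphi_{c^*}$, and the relation $E(\varphi_{c^*})=2c^*$ pins down $c^*$ and gives $\dot x(t)\to c^*$. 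Finally, the strong $H^1$ convergence on $(-\infty,z)\cup(\beta t,+\infty)$ is read off from the monotonicity: the localized right-energy converges to $E(\varphi_{c^*})$ while the far-left energy tends to $0$, so no $H^1$ mass is lost on these regions and weak convergence upgrades to strong. I expect the rigidity theorem to be the main obstacle, since it is there that the integrable transport structure specific to the Novikov equation must be used to exclude any residual profile.
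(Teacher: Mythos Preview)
Your outline follows the Martel--Merle--Molinet scheme, which is indeed the route taken in \cite{Pa} (note that Theorem \ref{AS_single_peakon} is quoted from that reference rather than proved in the present paper). The architecture---orbital stability, modulation, almost-monotonicity of localized energies, weak compactness yielding a limit solution, rigidity, upgrade to strong convergence---is correct, but three points need adjustment.

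First, taking $x(t)$ to be a maximum point of $u(t,\cdot)$ as in Theorem \ref{MT4} will not produce a $C^1$ map, yet the conclusion $\dot x(t)\to c^*$ demands differentiability. The actual modulation (cf.\ Lemma \ref{mod_lemma} and \eqref{mod_orthogonality}) is obtained via the Implicit Function Theorem through a mollified orthogonality condition $\int v\,(\rho_{n_0}*\varphi')(\cdot-\widetilde x(t))\,dx=0$, precisely so that $\widetilde x$ is $C^1$ and $\dot{\widetilde x}$ can be extracted from the equation. Second, the rigidity step---which you rightly flag as the crux---is not carried by the transport structure of $y$ together with the $E,F$ variational characterization; the paper emphasizes that the Liouville theorem in \cite{Pa} rests on a \emph{new Lyapunov functional, unrelated to the (non-conserved) momentum}, and the pertinent notion is \emph{$H^1$-almost localized} (decay of the localized $H^1$-energy), not concentration of $y$. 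Third, the strong convergence on $(-\infty,z)$ is not read off directly from the monotonicity of your $I_\sigma$; it comes from a separate general fact (Lemma \ref{traveling_energy_lem}): for any $u_0\in Y_+$ all the $H^1$-energy travels to the right, proved by showing that the implicitly defined energy-level curve $x_\gamma(t)$ satisfies $\dot x_\gamma(t)\ge \tfrac{2}{5}\int u^2\Psi'(\cdot-x_\gamma(t))\,dx>0$ and hence $x_\gamma(t)\to+\infty$.
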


As we mentioned before, the main ingredient in the proof of Theorem \ref{AS_single_peakon} is a rigidity property of the Novikov equation ensuring that every $H^1$-almost localized solution (c.f. Definition $1.1$ in \cite{Pa}) to equation \eqref{nov_eq_2} is actually a peakon. This latter property has been proved by introducing a new Lyapunov functional not related to the (not conserved) momentum of the equation. The main result of the present work is the asymptotic stability for peakon train solutions.

\begin{thm}[Asymptotic stability of a train of peakons]\label{MT5}
Let $c_1,...c_n$ be $n$ positive real number satisfying $c_1<...<c_n$ and $\beta\in(0,\tfrac{c_1}{4})$. There exists $L_0>0$ sufficiently large and $\varepsilon^\star>0$ small enough such that if a solution $u\in C_{ti}(\R,Y_+(\R))$ to the Novikov equation associated to some initial data $u_0\in Y_+(\R)$ satisfies \begin{align}\label{smallness_hyp_train_peakons}
\left\Vert u_0-\sum_{i=1}^n\varphi_{c_i}(\cdot-z_i)\right\Vert_{H^1}\leq \varepsilon^4, \quad \hbox{with } \ 0<\varepsilon<\varepsilon^\star,
\end{align}
for some $\{z_i\}_{i=1}^n\subset\R$ satisfying $z_{i+1}-z_i\geq L$ for some $L\geq L_0$ then the following holds: There exists $n$ positive real numbers $c_1^\star<...<c_n^\star$ and $C^1$ functions $x_1,...,x_n:\R\to\R$ such that for all $i=1,...,n,$ \begin{align*}
\dot{x}^\star_i(t)\to c_i^\star \ \hbox{ as } \ t\to+\infty \ \ \hbox{ and } \ \ u\big(t,\cdot+x_i(t)\big)\rightharpoonup \varphi_{c_i^\star} \ \hbox{ in } \ H^1 \ \hbox{ as } \ t\to+\infty.
\end{align*}
Moreover, for any $z\in\R$ the following strong convergence holds: \begin{align}\label{MT_5_conclusions}
\lim_{t\to+\infty}\left\Vert u(t)-\sum_{i=1}^n\varphi_{c_i^\star}\big(\cdot-x_i^\star(t)\big)\right\Vert_{H^1(\mathcal{A}_t)}=0, \quad \hbox{with } \ \mathcal{A}_t:=(-\infty,z)\cup(\beta t,+\infty)
\end{align}
\end{thm}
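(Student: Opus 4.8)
The plan is to bootstrap the orbital stability already obtained in Theorem \ref{MT2} into asymptotic stability by decoupling the bumps and reducing the dynamics near each crest to the single-peakon situation, where the rigidity (Liouville) property underlying Theorem \ref{AS_single_peakon} applies.

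\textbf{Modulated decomposition and linear separation.} Since the smallness assumption \eqref{smallness_hyp_train_peakons} is stronger than \eqref{initial_cond_hyp_train}, and $u_0\in Y_+(\R)$ furnishes, via Theorem \ref{theorem_lwp}, a global solution along which $E$ and $F$ are conserved, Theorem \ref{MT2} provides $C^1$ trajectories $x_1(t),\dots,x_n(t)$ with $\bigl\| u(t)-\sum_i\varphi_{c_i}(\cdot-x_i(t))\bigr\|_{H^1}\lesssim \varepsilon+L^{-1/8}$ and separation $x_{i+1}(t)-x_i(t)>\tfrac L2$. First I would sharpen this by deriving the modulation equations for the $x_i$: testing \eqref{nov_eq_2} against $\varphi_{c_i}'(\cdot-x_i)$ and using the separation gives $\dot x_i(t)=c_i+O(\varepsilon+L^{-1/8})+O(e^{-\delta L})$. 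Because $0<c_1<\cdots<c_n$, and $L_0$ is taken large, this upgrades the static separation to a \emph{linear} one, $x_{i+1}(t)-x_i(t)\ge \tfrac L2+\tfrac12(c_{i+1}-c_i)\,t$, so the bumps become asymptotically independent while all of them travel to the right.

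\textbf{Almost-monotonicity of localized energy.} The central tool is a family of localized versions of $E$ and $F$. Fixing intermediate speeds $\sigma_k\in(c_{k-1},c_k)$, moving centers $m_k(t)=\tfrac12\bigl(x_{k-1}(t)+x_k(t)\bigr)$, and a smooth nondecreasing cutoff $\psi$ equal to $0$ near $-\infty$ and $1$ near $+\infty$, I would set $\mathcal I_k(t):=\int_\R \psi\bigl((x-m_k(t))/K\bigr)\bigl(u^2+u_x^2\bigr)(t,x)\,dx$. Using the nonlocal form \eqref{nov_eq_2}, the strictly positive transport speed $u^2$, and crucially the sign information $u_0\in Y_+$ (which yields $|u_x|\le u$ pointwise), I expect to prove that each $\mathcal I_k$ is almost non-increasing, $\mathcal I_k(t_2)\le \mathcal I_k(t_1)+C e^{-\delta(L+\gamma t_1)}$ for $t_2\ge t_1$: the flux terms produced by the cutoff are supported in the inter-bump regions where $u$ is exponentially small, and the linear separation makes the correction integrable in time. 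Running the same computation with $1-\psi$ controls the energy trailing behind each center, which decouples the crests and confines the dispersive tail.

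\textbf{Weak limits, rigidity, and strong convergence.} Fix $k$ and a sequence $t_m\to+\infty$. By orbital stability the profiles $u(t_m,\cdot+x_k(t_m))$ are bounded in $H^1$, so up to a subsequence they converge weakly to some $\phi_k$ with $\|\phi_k-\varphi_{c_k}\|_{H^1}\lesssim \varepsilon+L^{-1/8}$. Letting $w_k$ denote the Novikov solution issued from $\phi_k$, the decoupling of the previous step shows that $w_k$ loses no energy to the far left or right of its single bump, i.e. $w_k$ is $H^1$-almost localized in the sense of \cite{Pa}; the rigidity/Liouville property underlying Theorem \ref{AS_single_peakon} then forces $w_k$, hence $\phi_k$, to be a single peakon $\varphi_{c_k^\star}$. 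The almost-monotone convergence of $\mathcal I_k$ pins down $c_k^\star$ independently of the sequence, giving the weak convergence $u(t,\cdot+x_k(t))\rightharpoonup\varphi_{c_k^\star}$ and, through the modulation equations, $\dot x_k(t)\to c_k^\star$; the ordering $c_1^\star<\cdots<c_n^\star$ follows from the linear separation together with these limit speeds. To obtain the strong $H^1$ convergence on $\mathcal A_t=(-\infty,z)\cup(\beta t,+\infty)$ in \eqref{MT_5_conclusions}, I would combine weak convergence with convergence of the localized energies: on each moving window $\int(u^2+u_x^2)$ converges to its peakon value, so no $H^1$ mass is lost and weak convergence becomes strong, exactly as in the single-bump case; the hypothesis $\beta<c_1/4$ guarantees $(\beta t,+\infty)$ eventually contains every bump while discarding the tail, and on $(-\infty,z)$ the rightward motion of all bumps together with the monotonicity forces the energy left behind to vanish.

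\textbf{Main obstacle.} I expect the hardest part to be the almost-monotonicity estimate, proved for a merely Lipschitz solution whose momentum density $y=u-u_{xx}$ is only a finite measure, while simultaneously controlling the peakon-peakon interaction. In contrast with the smooth soliton setting, the energy density concentrates at the non-smooth crests and the conserved functional $F$ carries the quartic term $u_x^4$, so keeping the error terms exponentially small demands careful use of the nonlocal representation \eqref{nov_eq_2} and of the positivity $u_0\in Y_+$ (which both secures the global $Y_+$ solution and yields $|u_x|\le u$). The second delicate point is making the Liouville reduction rigorous, namely verifying that the weak limit indeed launches an $H^1$-almost localized solution to which the rigidity theorem of \cite{Pa} genuinely applies.
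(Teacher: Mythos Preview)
Your proposal is correct and follows the paper's own route: modulation plus linear separation (Lemma \ref{mod_lemma}), almost-monotonicity of localized $H^1$ functionals (Lemmas \ref{tech_lem_mon_exp}--\ref{tech_lem_left}), weak limits identified as peakons via the rigidity theorem of \cite{Pa}, convergence of $\rho_i$ and $\dot{\widetilde x}_i$ through the monotone functionals and the modulation equations, and the general rightward-drift of $Y_+$ energy (Lemma \ref{traveling_energy_lem}) to dispose of $(-\infty,z)$. The one structural point you gloss over is that the identification of the $c_k^\star$ is not done in parallel but \emph{inductively from the fastest bump $k=n$ down to $k=1$}: for $k<n$ the energy to the right of $\widetilde x_k$ still carries the faster peakons, so the paper first invokes the inductive hypothesis \eqref{inductive_hypothesis} to subtract the already-identified tail $W_{k+1}=\sum_{j>k}\varphi_{c_j^\star}(\cdot-\widetilde x_j)$, transfers the almost-monotonicity to $w_{k+1}=u-W_{k+1}$, and only then runs the single-peakon argument around $\widetilde x_k$.
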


Finally, gathering the latter theorem with the asymptotic obtained by Hones et al in \cite{HoLuSz} we obtain the following asymptotic stability result for the whole manifold $\mathcal{N}$. 

\begin{cor}[Asymptotic stability of not-well ordered multi-peakons]\label{cor_MT5}
Let $p_1^0,...,p_n^0$ be $n$ positive real numbers, $q_1^0,...,q_n^0$ any sequence of real numbers satisfying $q_1^0<...<q_n^0$ and let us consider $0<\lambda_1<...<\lambda_n$ defined as in Corollary \ref{cor_MT2}. For any $\alpha>0$, any $\delta>0$ there exists $\varepsilon>0$ sufficiently small such that if $u_0\in Y_+(\R)$ satisfies \[
\left\Vert u_0-\sum_{i=1}^np_i^0\exp\big(-\vert \cdot-q_i^0\vert\big)\right\Vert_{H^1}\leq \varepsilon \quad \hbox{with}\quad \Vert y_0\Vert_{\mathcal{M}}\leq \alpha,
\]
then the following holds: There exists $0<p_1<...<p_n$ and $C^1$ functions $q_1,...,q_n:\R\to\R$ satisfying \[
\quad \vert p_i-\lambda_i\vert\leq\delta \quad \hbox{and}\quad \lim_{t\to+\infty}\dot{q}_i(t)=p_i^2, \quad 
\hbox{for all } i=1,...,n,
\]
such that \[
u(t)-\sum_{i=1}^n\varphi_{p_i}(\cdot-q_i(t))\xrightarrow{t\to+\infty} 0 \ \hbox{ in } \ H^1\left(\big(\tfrac{\lambda_1}{4}t,+\infty\big)\right).
\]
\end{cor}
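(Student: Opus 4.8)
The plan is to combine the \emph{self-sorting} of the exact multipeakon established in \cite{HoLuSz} with the train asymptotic stability of Theorem \ref{MT5}, by ``restarting'' the analysis at a large but fixed time. Let $M(t,x)=\sum_{i=1}^n p_i(t)e^{-|x-q_i(t)|}$ denote the exact $n$-peakon solution of \eqref{multipeak} issued from the ordered-position, positive-amplitude data $M(0)=\sum_i p_i^0 e^{-|\cdot-q_i^0|}$. According to \cite{HoLuSz}, as $t\to+\infty$ the amplitudes converge to the sorted limits, $p_i(t)\to\lambda_i$, the speeds satisfy $\dot q_i(t)\to\lambda_i^2$, and consecutive peaks separate, $q_{i+1}(t)-q_i(t)\to+\infty$. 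In particular $M(T_0)$ is itself a well-ordered, widely separated train once $T_0$ is large; the idea is to transfer the smallness of the perturbation to time $T_0$ and then apply Theorem \ref{MT5} from there.

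First I would freeze the constants $L_0>0$ and $\varepsilon^\star>0$ furnished by Theorem \ref{MT5}, and then choose $T_0=T_0(\delta,L_0,\{\lambda_i\})$ so large that the profile $M(T_0)=\sum_i p_i(T_0)e^{-|\cdot-q_i(T_0)|}$ has strictly increasing amplitudes with $|p_i(T_0)-\lambda_i|\le \delta/2$ and separations $q_{i+1}(T_0)-q_i(T_0)\ge L_0$. Written in the notation $\sum_i \varphi_{\tilde c_i}(\cdot-\tilde z_i)$ with $\sqrt{\tilde c_i}=p_i(T_0)$ and $\tilde z_i=q_i(T_0)$, this profile satisfies exactly the configuration hypotheses of Theorem \ref{MT5}.

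Next I would transfer the initial closeness to time $T_0$. The bound $\|y_0\|_{\mathcal{M}}\le\alpha$ guarantees, via global well-posedness in $Y_+$ (Theorem \ref{theorem_lwp}), that $u$ is a global solution confined to a fixed ball of $Y_+$; on the \emph{compact} interval $[0,T_0]$ the continuous dependence of the flow in $H^1$ then yields a modulus of continuity depending only on $T_0$, $\alpha$ and the reference data. Hence, shrinking $\varepsilon$ (depending on the now-fixed $T_0$), the hypothesis $\|u_0-M(0)\|_{H^1}\le\varepsilon$ forces $\|u(T_0)-M(T_0)\|_{H^1}\le(\varepsilon')^4$ with $\varepsilon'<\varepsilon^\star$. \textbf{This quantitative finite-time continuous-dependence estimate for non-smooth $Y_+$ (peakon-type) data is the main obstacle}: its constant degrades as $T_0$ grows, but since $T_0$ is fixed once $\delta$ and $L_0$ are chosen, smallness of $\varepsilon$ absorbs it, and the confinement to a bounded ball of $Y_+$ provided by $\alpha$ makes the modulus uniform.

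Finally, using the time-translation invariance of \eqref{nov_eq_2} and the fact that $u\in C_{ti}(\R,Y_+)$, I would apply Theorem \ref{MT5} to the shifted solution $s\mapsto u(T_0+s)$ with reference train $\sum_i\varphi_{\tilde c_i}(\cdot-\tilde z_i)$ and a parameter $\beta$ below the slowest limiting speed. This produces limiting amplitudes $c_i^\star$, $C^1$ positions $x_i^\star$ with $\dot x_i^\star\to c_i^\star$, weak $H^1$ convergence of each bump to its limiting peakon, and strong $H^1$ convergence on $(-\infty,z)\cup(\beta s,+\infty)$. Setting $p_i:=\sqrt{c_i^\star}$ and $q_i:=x_i^\star$ recovers the stated profile $\sum_i\varphi_{p_i}(\cdot-q_i)$ with $\dot q_i\to p_i^2$; the estimate $|p_i-\lambda_i|\le\delta$ follows from $p_i(T_0)\to\lambda_i$ combined with the fact that the orbital stability underlying Theorem \ref{MT5} (see Theorem \ref{MT2}) keeps the limiting amplitudes within an $O(\varepsilon'+L^{-1/8})$ band of the reference amplitudes $\tilde c_i$, which is made $\le\delta/2$ by enlarging $T_0$ and shrinking $\varepsilon$. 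Undoing the shift $t=T_0+s$ turns the region $(\beta s,+\infty)$ into a half-line of the form $(\beta t-\beta T_0,+\infty)$, and choosing $\beta$ suitably below the slowest limiting speed makes this half-line contain the target set, which delivers the claimed strong convergence in $H^1\big((\tfrac{\lambda_1}{4}t,+\infty)\big)$.
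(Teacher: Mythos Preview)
Your proposal is correct and follows essentially the same route as the paper: the paper states that Corollary \ref{cor_MT5} is obtained ``immediately'' by combining Theorem \ref{MT5} with the multipeakon asymptotics of \cite{HoLuSz} (and the argument of Corollary \ref{cor_MT2}), which is precisely your strategy of self-sorting the exact multipeakon up to a large fixed time $T_0$, propagating the $H^1$-closeness on $[0,T_0]$ by continuous dependence, and then invoking Theorem \ref{MT5} from $T_0$. One minor point: the continuous dependence you invoke in the propagation step is exactly the content of item~2 of Theorem \ref{theorem_gwp} (rather than Theorem \ref{theorem_lwp}), which gives $H^1$-continuity uniformly on compact time intervals for sequences bounded in $Y_+$; this is what the paper uses in the proof of Corollary \ref{cor_MT2} and is the correct reference here.
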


\begin{rem}
Notice that due to the fact that the Novikov equation \eqref{nov_eq_2} is invariant under the transformation $u(t,x)\mapsto -u(t,x)$, we also deduce the orbital and asymptotic stability result for a train of antipeakon profiles with perturbations in the class of $H^1$ functions with momentum density belonging to $\mathcal{M}_b^-(\R)$. 
\end{rem}

\subsection{Organization of this paper}
This paper is organized as follows. In Section \ref{preliminaries} we introduce some  definitions and state a series of results needed in our analysis, for instance, the local and global well-posedness results in the class of solutions we shall work with. In section \ref{sec_MT4} we prove the orbital stability result for a single peakon solution. In section \ref{sec_MT2}, following the ideas of the latter section we prove the orbital stability of a train of peakons. Finally, in section \ref{sec_MT5} we prove the asymptotic stability of peakon trains for $Y_+(\R)$ perturbations.

\medskip

\section{Preliminaries}\label{preliminaries}

\subsection{Preliminaries and definitions}

In order to make regularization arguments, in the sequel we shall need the following family of functions: Let $\{\rho_n\}_{n\in\N}$ be a mollifiers family definied by \begin{align}\label{def_rho}
\rho_n(x):=n\left(\int_\R\rho(\xi)d\xi\right)^{-1}\rho(nx), \quad \hbox{ where } \quad \rho(x):=\begin{cases}
e^{\frac{1}{x^2-1}} & \hbox{for } \vert x\vert<1
\\ 0 & \hbox{for } \vert x\vert\geq 1.
\end{cases}
\end{align}
It worth to notice that for any $n\in\N$ we have $\Vert \rho_n\Vert_{L^1}=1$. On the other hand, from now on we shall denote by $C_b(\R)$ the set of bounded continuous functions on $\R$, and by  $C_c(\R)$ the set of compactly supported continuous functions on $\R$. Throughout this paper we shall also need the following definitions.
\begin{defn}[Weakly convergence of measures]\label{def_weakly_conv}
We say that a sequence $\{\nu_n\}\subseteq\mathcal{M}$ converge weakly towards $\nu\in\mathcal{M}$, which we shall denote by $\nu_n\rightharpoonup \nu$, if \[
 \langle \nu_n,\phi\rangle\to\langle \nu,\phi\rangle, \quad \hbox{for any }\, \phi\in C_c(\R).
\]
\end{defn}

\begin{rem}\label{weak_weakstar_conv}
Notice that we are adopting the standard Measure Theory's notation for the \emph{weak convergence} of a measure. Nevertheless, we recall that from a Functional Analysis point of view this convergence corresponds to the weak-* convergence on Banach spaces.
\end{rem}

\begin{defn}[Tightly and weak continuity of measure-valued functions]\label{def_c_ti} Let $I\subseteq \R$ be an interval.
\begin{enumerate}
\item We say that a function $f\in C_{ti}(I,\mathcal{M}_b)$ if for any $\phi\in C_b(\R)$ the map $t\mapsto\langle f(t)\phi\rangle$ is continuous on $I$.
\item We say that a function $f\in C_w(I,\mathcal{M})$ if for any $\phi\in C_c(\R)$ the map $t\mapsto\langle f(t)\phi\rangle$ is continuous in $I$.
\end{enumerate}
\end{defn}

\begin{defn}[Weak convergence in $C_{ti}(I)$] Let $I\subseteq \R$ be an interval. We say that a sequence $f_n\rightharpoonup f$ in $C_{ti}(I,\mathcal{M}_b)$ if for any $\phi \in C_b(\R)$ we have \[
\langle f_n(\cdot)\phi\rangle \to \langle f(\cdot)\phi\rangle \,\hbox{ in } C(I).
\]
\end{defn}

\subsection{Global well-posedness}

In the proofs of Theorem  \ref{MT5} we shall need to approximate non-smooth solutions of equation \eqref{nov_eq_2} by sequences of smooth solutions. In this regard, we shall need a global well-posedness result on a class of smooth solutions. In \cite{WuYi}, following the ideas of the seminal work of Constantin and Escher \cite{CoEs} on the Camassa-Holm equation, Wu and Yin proved the smooth global well-posedness for initial data with non-negative momentum density.

\begin{thm}[\cite{WuYi}]\label{GWP_smooth}
Let $u_0\in H^s$ for $s\geq 3$, with non-negative  momentum density $y_0$ belonging to $L^{1}(\R)$. Then, equation \eqref{novikov_eq} has a unique global strong solution \[
u\in C(\R,H^s(\R))\cap C^1(\R,H^{s-1}(\R)).
\]
Moreover, we have that $E(u)$ and $F(u)$ are two conservation laws. Additionally, denoting by $y(t):=u(t)-u_{xx}(t)$ the momentum density, we have that $y(t)$ and $u(t)$ are non-negative for all times $t\in\R$ and $\vert u_x(t,\cdot)\vert\leq u(t,\cdot)$ on $\R$.
\end{thm}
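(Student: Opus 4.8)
The plan is to upgrade a local solution to a global one by showing that the conserved energy, together with the sign of the momentum density, forbids the only mechanism for blow-up. First I would record local well-posedness: for $s\ge 3$ the Cauchy problem for \eqref{novikov_eq} (equivalently, for the nonlocal form \eqref{nov_eq_2}) has a unique solution $u\in C([0,T^*),H^s)\cap C^1([0,T^*),H^{s-1})$ on a maximal interval, depending continuously on $u_0$; since $s\ge 3$ this is a classical solution, so \eqref{nov_eq_y} holds pointwise and $y=u-u_{xx}\in H^{s-2}\hookrightarrow C_b(\R)$. This is obtained by Kato's quasilinear semigroup theory or a mollification scheme, the core being the a priori estimate derived via $\Lambda^s=(1-\partial_x^2)^{s/2}$ and Kato--Ponce commutator and Moser estimates,
\bel{gwp_energy}
\frac{d}{dt}\|u(t)\|_{H^s}^2\lesssim\big(\|u\|_{L^\infty}^2+\|u\|_{L^\infty}\|u_x\|_{L^\infty}+\|u_x\|_{L^\infty}^2\big)\|u(t)\|_{H^s}^2 .
\eeq
By Gronwall, \eqref{gwp_energy} yields the \emph{blow-up criterion}: if $T^*<\infty$ then $\int_0^{T^*}\big(\|u(t)\|_{L^\infty}+\|u_x(t)\|_{L^\infty}\big)^2\,dt=+\infty$; equivalently, the solution extends as long as $\|u\|_{L^\infty}$ and $\|u_x\|_{L^\infty}$ remain bounded.

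Next I would verify the conservation laws and extract an $L^\infty$ bound. Differentiating $E(u)=\int(u^2+u_x^2)$ in time, substituting \eqref{novikov_eq} and integrating by parts (all boundary terms vanishing by the $H^s$-decay) gives $\tfrac{d}{dt}E(u)=0$, and the analogous computation gives $\tfrac{d}{dt}F(u)=0$. In particular $\|u(t)\|_{H^1}\equiv\|u_0\|_{H^1}$, whence by the one-dimensional Sobolev embedding $\|u(t)\|_{L^\infty}\le\tfrac{1}{\sqrt2}\|u_0\|_{H^1}$ is bounded uniformly in time.

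The crux is the propagation of the sign of $y$. Introduce the characteristic flow $q(t,x)$ solving $\partial_t q=u^2(t,q)$, $q(0,x)=x$; since $u\in C^1_x$, the map $q(t,\cdot)$ is a well-defined increasing diffeomorphism of $\R$. Along it, the transport equation \eqref{nov_eq_y} integrates to
\bel{gwp_char}
y\big(t,q(t,x)\big)=y_0(x)\,\exp\!\Big(-3\int_0^t (uu_x)\big(s,q(s,x)\big)\,ds\Big),
\eeq
so that $y_0\ge 0$ forces $y(t,\cdot)\ge 0$ throughout the life-span. Granting $y(t)\ge 0$, the representation formulas \eqref{positive_mom_1}--\eqref{positive_mom_2} apply to $u(t)\in H^s$ and give the pointwise bounds $|u_x(t,\cdot)|\le u(t,\cdot)$; in particular $u(t)\ge 0$. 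Combining with the energy bound,
\[
\|u_x(t)\|_{L^\infty}\le\|u(t)\|_{L^\infty}\le\tfrac{1}{\sqrt2}\|u_0\|_{H^1}\qquad\text{for every } t\text{ in the maximal interval}.
\]
Thus the integrand in the blow-up criterion is uniformly bounded, so its integral over any finite interval is finite; therefore $T^*=+\infty$. The whole argument is symmetric under the space-time reversal $u(t,x)\mapsto u(-t,-x)$, which also preserves $y\ge 0$, so the solution is global for $t\le 0$ as well, and the stated non-negativity and pointwise bounds hold for all $t\in\R$.

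The step I expect to require the most care is making \eqref{gwp_char} rigorous: one must check that the flow $q$ is defined on the whole maximal interval, that $t\mapsto y(t,q(t,x))$ is genuinely differentiable, and that composing $y\in H^{s-2}$ with $q$ loses no regularity --- a subtle matter, since $u$ and $y$ sit at different regularity levels. The clean route is to establish \eqref{gwp_char}, and hence the sign propagation, first for smooth, fast-decaying approximating data (where everything is classical) and then to pass to the limit using the continuous dependence furnished by the local theory; reconciling the flow regularity with the transport identity is where the genuine work lies.
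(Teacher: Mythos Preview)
The paper does not supply its own proof of this theorem: it is quoted as a result of Wu--Yin \cite{WuYi}, and is used as a black box in the subsequent well-posedness discussion. Your outline is correct and is precisely the standard argument (going back to Constantin--Escher for Camassa--Holm, adapted by Wu--Yin to Novikov): local well-posedness with the blow-up criterion controlled by $\|u_x\|_{L^\infty}$, conservation of $E$ giving a uniform $H^1$ bound, propagation of $y\ge 0$ along the characteristics of the transport equation \eqref{nov_eq_y}, and the pointwise inequality $|u_x|\le u$ via \eqref{positive_mom_1}--\eqref{positive_mom_2} to close the argument. Your caveat about making the characteristic identity \eqref{gwp_char} rigorous is well placed; the usual fix is exactly the approximation-by-smooth-data route you describe, using the continuity of the data-to-solution map.
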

Unfortunately, since peakon profiles
do not belong\footnote{Actually, they do not belong to any $W^{1+\frac{1}{p},p}(\R)$ for any $p\in [1,+\infty)$. However, peakon profiles do belong to $W^{1,\infty}(\R)$, where $W^{1,\infty}(\R)$ denotes the space of Lipschitz functions.} to $H^{3/2}(\R)$, they do not enter into this framework either, and hence this theorem is not useful for our purposes. Nevertheless, by following the work of Constantin and Molinet \cite{CoMo}, in the same work Wu and Yin also proved a global well-posedness theorem for a class of functions containing peakons. This result shall be crucial in our analysis. However, we shall need a slightly improved version of this theorem, which we state below. 
\begin{thm}[\cite{WuYi}]\label{theorem_gwp}
Let $u_0\in H^1(\R)$ be a function satisfying $y_0:=(u_0-u_{0,xx})\in\mathcal{M}_b^+(\R)$.
Then, the following properties hold: \begin{enumerate}
\item[1.] \emph{\textbf{Uniqueness and global existence:}} There exists a global weak solution \[
u\in C(\R,H^1(\R))\cap C^1(\R,L^2(\R)),
\]
associated to the initial data $u(0)=u_0$ such that its momentum density \[
y(t,\cdot):=u(t,\cdot)-u_{xx}(t,\cdot)\in C_{ti}(\R,\mathcal{M}_b^+(\R)).
\]
Additionally $E(u)$ and $F(u)$ are conservation laws. Moreover, the solution is unique in the class \[
\{f\in C(\R,H^1(\R))\}\cap\{f-f_{xx}\in L^\infty(\R,\mathcal{M}_b^+)\}.
\]
\item[2.] \emph{\textbf{Continuity with respect to the initial data $H^1(\R)$:}} For any sequence $\{u_{0,n}\}_{n\in\N}$ bounded in $Y_+(\R)$  such that $ u_{0,n}\to u_0 \,\hbox{ in } H^1(\R)$,
the following holds: For any $T>0$, the family of solutions $\{u_{n}\}$ to equation \eqref{nov_eq_2} associated to $\{u_{0,n}\}$ satisfies \begin{align}\label{convergence_h1_ti}
u_n\to u \,\hbox{ in }\, C([-T,T],H^1(\R)) \quad \hbox{and} \quad y_{0,n}\rightharpoonup y \,\hbox{ in }\, C_{ti}([-T,T],\mathcal{M}).
\end{align}
\end{enumerate}
\end{thm}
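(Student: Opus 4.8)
The plan is to follow the regularization scheme of Constantin--Molinet \cite{CoMo}, as adapted to the cubic Novikov setting in \cite{WuYi}, and to upgrade the conclusions to the tight topology $C_{ti}$. First I would construct the solution by smooth approximation: mollify the datum, setting $u_{0,n}:=\rho_n*u_0$ (or a variant that preserves positivity of the momentum), so that $u_{0,n}\in H^s$ for every $s\geq 3$, the momentum density $y_{0,n}=u_{0,n}-u_{0,n,xx}$ is non-negative and in $L^1$, and $u_{0,n}\to u_0$ in $H^1$ with $y_{0,n}\rightharpoonup y_0$ in $\mathcal{M}_b$. Theorem \ref{GWP_smooth} then produces global smooth solutions $u_n$ with $y_n(t)\geq 0$, $u_n(t)\geq 0$ and $|u_{n,x}(t)|\leq u_n(t)$ pointwise, along which $E(u_n)$ and $F(u_n)$ are conserved.

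Next I would assemble the uniform estimates that give the limit a meaning. Conservation of $E$ yields $\sup_t\|u_n(t)\|_{H^1}=\|u_{0,n}\|_{H^1}$, bounded uniformly in $n$; combined with $\|u_n\|_{L^\infty}\lesssim\|u_n\|_{H^1}$ and the sign inequality $|u_{n,x}|\leq u_n$ this gives a uniform $W^{1,\infty}$ bound. Using non-negativity, $\|y_n(t)\|_{\mathcal{M}}=\int_\R y_n(t)=\int_\R u_n(t)$, and a direct computation from \eqref{nov_eq_y} shows $\frac{d}{dt}\int y_n=-\tfrac12\int u_{n,x}^3$, which by the previous bounds is controlled by $E(u_n)^{3/2}$; hence $\|y_n(t)\|_{\mathcal{M}}$ grows at most linearly in $t$, bounded on every compact interval uniformly in $n$. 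Reading \eqref{nov_eq_2} as $u_{n,t}=-u_n^2u_{n,x}-p*(\cdots)$ furnishes a uniform bound on $u_{n,t}$ in $C(\R,L^2)$ and thus equicontinuity in time. A compactness argument (Helly for the measures $y_n(t)$, Arzel\`a--Ascoli in time) then extracts a subsequence with $u_n\rightharpoonup u$ weak-$*$ in $L^\infty_tH^1$, $u_n\to u$ in $C_{loc}$, and $y_n(t)\rightharpoonup y(t)$ in $\mathcal{M}_b^+$ uniformly enough in $t$ to place $y\in C_{ti}(\R,\mathcal{M}_b^+)$. Since every nonlinear term in \eqref{nov_eq_2} is at most quadratic in $(u,u_x)$ and $p*$ is smoothing, one passes to the limit and identifies $u$ as a weak solution.

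To recover the conservation laws and the \emph{strong} convergence needed in part (2), I would first verify $E(u(t))=E(u_0)$ and $F(u(t))=F(u_0)$ for the limit, by testing the $L^2$-level equation and exploiting the regularization by $p*$. Then weak $H^1$ convergence of $u_n(t)$ together with convergence of norms, $E(u_n)=E(u_{0,n})\to E(u_0)=E(u(t))$, upgrades to strong convergence in $H^1$, uniformly on compact time intervals, and this then propagates to the tight convergence of the momenta. This strengthening of the continuity statement is precisely the refinement over the version in \cite{WuYi}.

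The main obstacle is \textbf{uniqueness} in the class $\{f\in C(\R,H^1)\}\cap\{f-f_{xx}\in L^\infty(\R,\mathcal{M}_b^+)\}$. Here I would pass to Lagrangian coordinates, defining the flow $q(t,x)$ by $\partial_tq=u^2(t,q)$, $q(0,x)=x$, which is a bi-Lipschitz homeomorphism because $u^2\in W^{1,\infty}$. Along this flow the transport equation \eqref{nov_eq_y} integrates to an explicit push-forward formula for $y(t)$ with a Jacobian weight coming from the $3uu_x$ term, so two solutions issued from the same datum must carry the same $y(t)$ once their flows coincide; a Gronwall estimate on the difference of the two flow maps, closed through the reconstruction $u=p*y$, then forces the solutions to agree. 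The delicate points are justifying the characteristic computation when $y$ is only a measure --- this is exactly where the sign condition $y\geq 0$ is essential, since it keeps $\|y(t)\|_{\mathcal{M}}$ finite along the flow and the flow bi-Lipschitz --- and controlling the cubic nonlocal terms in the Gronwall argument. With uniqueness in hand, part (2) follows by a compactness--uniqueness argument: every subsequence of $\{u_n\}$ has a further subsequence converging, by the uniform estimates above, to \emph{some} solution with datum $u_0$, which must be the unique $u$; hence the full sequence converges in both $C([-T,T],H^1)$ and $C_{ti}([-T,T],\mathcal{M})$.
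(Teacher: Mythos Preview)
Your proposal is correct and follows essentially the same Constantin--Molinet regularization scheme that the paper itself defers to: the paper does not give a self-contained argument but simply refers to \cite{Mo,Mo2} (Propositions~2.2) and to \cite{WuYi} for point~1, noting only that the $C_{ti}$ refinement for $y$ carries over from \cite{Mo}. Your sketch is exactly a fleshed-out version of that programme --- mollify, invoke Theorem~\ref{GWP_smooth}, collect the uniform bounds from $E$ and the sign condition, pass to the limit, and close with Lagrangian uniqueness plus compactness--uniqueness for part~2. One small slip: the nonlinear terms in \eqref{nov_eq_2} are cubic in $(u,u_x)$, not quadratic, but this is harmless since your uniform $W^{1,\infty}$ bound (from $|u_x|\leq u$ and $H^1\hookrightarrow L^\infty$) still controls them and allows the limit passage.
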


\begin{proof}
We refer to \cite{Mo,Mo2}, Propositions $2.2$, for a proof of this theorem in both the Camassa-Holm and the $b$-family case. Notice that the same proof applies to the Novikov equation, provided Theorem \ref{GWP_smooth} and the fact that the first point of the statement was proven in \cite{WuYi}, except for the fact that $y\in C_{ti}(\R,\mathcal{M}_b^+)$, which can be proven in exactly the same fashion as in \cite{Mo}.% It is important to point out that the only difference between these two cases is due to the lack of conservation of momentum. In fact, in the CH case it is possible to approximate the solution by a sequence of smooth solutions and, by using Helly's Theorem, to deduce that the limit solution satisfies $u_x\in \mathrm{BV}$. Nevertheless, since Helly's Theorem still holds in any compact set, we have $u_x\in\mathrm{BV}_{loc}$, which turns out to be enough to deal with the nonlinear terms.}
%First of all we point out that the first point of the statement was proven in \cite{ZhYi} except for the conservation of $M(u)$ and the fact that $y\in C_{ti}(\R,\mathcal{M}_+)$. Moreover, the result given in \cite{ZhYi} is stated only for positive times, nevertheless, due to the invariances of the equation we immediately deduce the result for negative time as well. 
%\medskip
%Let us start by proving these two facts  assuming that \eqref{convergence_h1} holds. 
%------------------------
%------------------------
%------------------------
%------------------------
%------------------------
%------------------------
%------------------------
%------------------------
%------------------------
%------------------------
%{\color{blue}\begin{rem}
%Restringirse a subconjuntos $\Omega$ acotados para así obtener embedding compacto via Rellich. Luego, mediante argumento diagonal obtener la igualdad en todo el espacio?
%\end{rem}}
\end{proof}

\subsection{Local well-posedness}
To study the orbital stability of a train of peakons, since we are not assuming positivity of the momentum density we shall need a suitable well-posedness theorem. In this regard, in \cite{HiHo} the following local well-posedness for smooth solutions was derived. 
\begin{thm}[\cite{HiHo}]\label{lwp_smooth}
Let $u_0\in H^s$ with $s>\tfrac{3}{2}$. Then, there exists $T>0$ and a unique solution $ u\in C([0,T],H^s(\R))$ to equation \eqref{nov_eq_2} associated to $u_0$. Moreover, the data-to-solution map depends continuously on $u_0$.
\end{thm}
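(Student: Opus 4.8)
The plan is to treat \eqref{nov_eq_2} as a quasilinear nonlocal transport equation whose only genuinely top-order contribution is the convective term $u^2u_x$, and then to run the standard energy-method/mollification scheme, closing with a Bona--Smith argument for the continuous dependence. The first step is to recast the nonlocal nonlinearity so that no second derivative survives inside the convolution. Using the identity $3uu_xu_{xx}=\partial_x\big(\tfrac32 uu_x^2\big)-\tfrac32 u_x^3$, equation \eqref{nov_eq_2} becomes
\[
u_t+u^2u_x=-\partial_x p*\big(\tfrac32 uu_x^2\big)-p*\big(\tfrac12 u_x^3+3u^2u_x\big).
\]
Since $p*=(1-\partial_x^2)^{-1}$ smooths by two derivatives and $\partial_x p*$ by one, the whole right-hand side is an order-zero (in fact regularizing) nonlinear operator $F(u)$ that maps $H^s$ into itself and is locally Lipschitz there for $s>\tfrac32$, using the algebra/embedding $H^{s-1}\hookrightarrow L^\infty$. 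Hence only $u^2u_x$ can cause derivative loss.

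Next comes the a priori estimate. Applying $\Lambda^s:=(1-\partial_x^2)^{s/2}$ and pairing with $\Lambda^s u$ in $L^2$, one splits $\int \Lambda^s(u^2u_x)\,\Lambda^s u=\int [\Lambda^s,u^2]u_x\,\Lambda^s u+\int u^2\,\Lambda^s u_x\,\Lambda^s u$; the second integral is integrated by parts into $-\tfrac12\int (u^2)_x(\Lambda^s u)^2$, while the commutator is bounded by the Kato--Ponce estimate $\|[\Lambda^s,u^2]u_x\|_{L^2}\lesssim \|(u^2)_x\|_{L^\infty}\|u\|_{H^s}+\|u^2\|_{H^s}\|u_x\|_{L^\infty}$. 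For $s>\tfrac32$ one has $H^s\hookrightarrow W^{1,\infty}$, so every factor is controlled by $\|u\|_{H^s}$ and the whole thing closes to a Riccati-type inequality $\tfrac{d}{dt}\|u\|_{H^s}^2\lesssim \|u\|_{H^s}^4$. This guarantees a lifespan $T=T(\|u_0\|_{H^s})>0$ on which $\|u(t)\|_{H^s}$ remains bounded.

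To make this rigorous I would regularize, e.g. replacing $u^2u_x$ by the Friedrichs-mollified term $J_\varepsilon\big((J_\varepsilon u)^2\partial_x J_\varepsilon u\big)$ (or passing through a Galerkin truncation), which turns the equation into an ODE in $H^s$ solvable by Picard; the identical energy estimate yields $\varepsilon$-uniform bounds on a common interval $[0,T]$. Uniqueness and compactness then both flow from a single difference estimate: for two solutions $u,v$ the difference $w=u-v$ obeys, in the weaker norm $L^2$ (or $H^{s-1}$), $\tfrac{d}{dt}\|w\|^2\lesssim \big(\|u\|_{H^s}+\|v\|_{H^s}\big)\|w\|^2$, so Gronwall gives uniqueness and shows the mollified family is Cauchy at low regularity; interpolating against the uniform $H^s$ bound upgrades this to convergence in $C([0,T],H^{s'})$ for every $s'<s$, producing a solution in $L^\infty([0,T],H^s)\cap C([0,T],H^{s'})$.

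The main obstacle is the continuous dependence of the data-to-solution map in the \emph{strong} $H^s$ topology: the difference estimate above is only Lipschitz in a weaker norm, and naive energy estimates lose continuity at the top regularity. I would resolve this by the Bona--Smith technique, approximating $u_0$ by mollified data $u_0^\varepsilon$ (via a family such as \eqref{def_rho}), exploiting that $\|u_0^\varepsilon-u_0\|_{H^s}\to 0$ while the higher norms blow up in a controlled way, $\|u_0^\varepsilon\|_{H^{s+k}}\lesssim \varepsilon^{-k}\|u_0\|_{H^s}$ and $\|u_0^\varepsilon-u_0\|_{H^{s-k}}=o(\varepsilon^{k})$, and then combining the smooth-data flow continuity with the difference estimate evaluated at two distinct regularity levels to control $\|u^\varepsilon-u\|_{H^s}$ uniformly in time. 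This interpolation-balancing step is the delicate heart of the proof; the rest is routine quasilinear machinery. An equivalent route is to verify Kato's three conditions for the operator $A(u)=u^2\partial_x$ on the pair $H^{s-1}\supset H^s$ with source term $F(u)$, which packages existence, uniqueness and continuous dependence simultaneously, but it reduces to the very same commutator and difference estimates.
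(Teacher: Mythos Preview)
The paper does not supply its own proof of this statement: Theorem \ref{lwp_smooth} is simply quoted from \cite{HiHo} and used as a black box (it feeds into the proof of Theorem \ref{theorem_lwp}). So there is nothing to compare against in the paper itself.

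That said, your outline is correct and is essentially the scheme carried out in \cite{HiHo}. Your algebraic rewriting of the nonlocal term is right (using $3uu_xu_{xx}=\partial_x(\tfrac32 uu_x^2)-\tfrac32 u_x^3$ one indeed reduces the right-hand side to $-\partial_x p*(\tfrac32 uu_x^2+u^3)-\tfrac12 p*u_x^3$, which maps $H^s\to H^s$ Lipschitz-locally for $s>\tfrac32$); the Kato--Ponce commutator estimate closes the $H^s$ energy bound to a cubic Riccati inequality; the Friedrichs mollification and $L^2$/$H^{s-1}$ difference estimate give existence and uniqueness; and the Bona--Smith interpolation argument is exactly the device used in \cite{HiHo} to upgrade continuous dependence to the top $H^s$ norm (direct estimates only give Lipschitz dependence in $H^{s-1}$, and in fact the data-to-solution map is known not to be uniformly continuous in $H^s$). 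The alternative you mention---verifying Kato's semigroup conditions for $A(u)=u^2\partial_x$ on $H^{s-1}\supset H^s$---also works and was the route taken for the Camassa--Holm equation, but it packages the same commutator computation.
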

Nevertheless, as we discussed before, neither peakons nor peakon trains belong to this class of initial data. However, in \cite{Da} Danchin noticed that, in the Camassa-Holm case, the maximal existence time of a solution $u\in H^s(\R)$ for $s>\tfrac{3}{2}$ is bounded from below by a positive real number, which allowed him to obtain local weak solutions without the positivity assumption on the initial momentum density. The following theorem states the analogous result for the Novikov equation.
\begin{thm}\label{theorem_lwp}
Let $u_0\in H^1(\R)$ be a function satisfying $y_0:=(u_0-u_{0,xx})\in\mathcal{M}_b(\R)$.
Then, there exists a function $T=T(\Vert y_0\Vert_{\mathcal{M}})>0$ and a unique solution $u\in\mathcal{C}_{ti}([-T,T],Y(\R))$ to equation \eqref{nov_eq_2} associated to the initial data $u_0$. Moreover, the functionals $E(\cdot)$ and $F(\cdot)$ are conserved along the trajectory. Additionally, if $y_0\in\mathcal{M}_b$ defines a signed Radon measure, then the solution $u(t)$ is global in time. Furthermore, if $\{u_{0,n}\}$ is a sequence in $Y(\R)$ satisfying $u_{0,n}\to u_0$ in $Y(\R)$, then the corresponding sequence of solutions $\{u_n\}$ to the Novikov equation emanating from $u_{0,n}$ satisfy $u_n\to u$ in $C([-T,T],H^1(\R))$.
\end{thm}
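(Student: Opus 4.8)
The plan is to follow Danchin's strategy for the Camassa--Holm equation, adapted to the cubic nonlocal form \eqref{nov_eq_2}: construct the solution as a limit of smooth solutions, the decisive point being an a priori lower bound on the lifespan depending only on $\Vert y_0\Vert_{\mathcal{M}}$, and then recover uniqueness and continuous dependence through the characteristic flow. First I would regularize: set $u_{0,n}:=\rho_n*u_0$ with $\rho_n$ as in \eqref{def_rho}, so that $u_{0,n}\in H^\infty(\R)$, $u_{0,n}\to u_0$ in $H^1$, and $y_{0,n}=\rho_n*y_0\in L^1(\R)$ with $\Vert y_{0,n}\Vert_{L^1}\leq\Vert y_0\Vert_{\mathcal{M}}$. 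Theorem \ref{lwp_smooth} then supplies smooth solutions $u_n$ on a maximal interval, and the whole game is to control this interval from below uniformly in $n$.

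The heart of the argument is precisely this uniform lifespan bound. Working with the transport form \eqref{nov_eq_y} and differentiating $\Vert y_n(t)\Vert_{L^1}$ (multiplying by $\sgn(y_n)$ and integrating by parts), the transport term and the reaction term combine as $2\int u_nu_{n,x}\vert y_n\vert-3\int u_nu_{n,x}\vert y_n\vert=-\int u_nu_{n,x}\vert y_n\vert$, whence
\[
\frac{d}{dt}\Vert y_n(t)\Vert_{L^1}\leq \Vert u_n\Vert_{L^\infty}\Vert u_{n,x}\Vert_{L^\infty}\Vert y_n(t)\Vert_{L^1}\leq \tfrac14\Vert y_n(t)\Vert_{L^1}^3,
\]
using $\Vert u_n\Vert_{L^\infty},\Vert u_{n,x}\Vert_{L^\infty}\leq\tfrac12\Vert y_n\Vert_{L^1}$ from $u_n=p*y_n$ and $u_{n,x}=p_x*y_n$. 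Integrating this Riccati-type inequality produces a lifespan $T\geq T(\Vert y_0\Vert_{\mathcal{M}})\sim\Vert y_0\Vert_{\mathcal{M}}^{-2}>0$ together with a uniform bound $\sup_{[-T,T]}\Vert y_n(t)\Vert_{L^1}\leq C(\Vert y_0\Vert_{\mathcal{M}})$, independent of $n$; in particular $\sup_{[-T,T]}\Vert u_n(t)\Vert_{W^{1,\infty}}\leq C$ by the Remark following the definition of $Y$.

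With these uniform bounds I would extract a limit. Using \eqref{nov_eq_2} to control $\partial_t u_n$, an Aubin--Lions/Arzel\`a--Ascoli argument gives, up to a subsequence, $u_n\to u$ in $C([-T,T],L^2_{\mathrm{loc}}(\R))$ and $y_n\rightharpoonup y$ weakly-$*$ in $L^\infty([-T,T],\mathcal{M}_b)$, which is enough to pass to the limit in \eqref{nov_eq_2}; the tight continuity $u\in\mathcal{C}_{ti}([-T,T],Y)$ is obtained as in \cite{CoMo,Mo} from the representation of $y(t)$ as a weighted push-forward of $y_0$ along the flow. Conservation of $E(\cdot)$ follows from $u_n\to u$ in $C([-T,T],H^1)$ and $E(u_n(t))=E(u_{0,n})$; for $F(\cdot)$ I would upgrade to strong $L^4$-convergence of $u_{n,x}$ by interpolating the $L^2$-convergence against the uniform $L^\infty$-bound, $\Vert u_{n,x}-u_x\Vert_{L^4}^4\lesssim\Vert u_{n,x}-u_x\Vert_{L^2}^2\Vert u_{n,x}-u_x\Vert_{L^\infty}^2$, and then pass to the limit in $F(u_n(t))=F(u_{0,n})$.

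The main obstacle is uniqueness, as the sign of $y$ is unavailable here. Following Danchin I would pass to Lagrangian coordinates via the flow $\dot X(t,\xi)=u^2(t,X(t,\xi))$, which is well defined because $u^2$ is Lipschitz ($u\in W^{1,\infty}$ uniformly on $[-T,T]$); along characteristics $y(t,X(t,\xi))=y_0(\xi)\exp\big(-3\int_0^t(uu_x)(s,X(s,\xi))\,ds\big)$, so $y(t)$ is a weighted push-forward of $y_0$ and $u=p*y$ is reconstructed from it. Recasting the system in these coordinates turns it into an ODE in a Banach space whose nonlinearity is locally Lipschitz in the relevant norms, and a Gr\"onwall estimate on the difference of two solutions (controlling $\Vert u_1-u_2\Vert$ in an $L^2$-type norm with coefficients depending on the $\Vert y_i\Vert_{\mathcal{M}}$) yields uniqueness; the same estimate, applied to the regularized data, gives the continuous dependence $u_n\to u$ in $C([-T,T],H^1)$ and promotes convergence of the full sequence. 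Finally, global existence when $y_0$ is sign-definite is immediate: for $y_0\geq0$ the constructed solution coincides by uniqueness with the global solution of Theorem \ref{theorem_gwp}, and the case $y_0\leq0$ reduces to it via the symmetry $u\mapsto-u$.
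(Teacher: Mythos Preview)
Your proposal is correct and follows essentially the same route as the paper: regularize via $u_{0,n}=\rho_n*u_0$, derive the Riccati-type inequality $\tfrac{d}{dt}\Vert y_n\Vert_{L^1}\leq\tfrac14\Vert y_n\Vert_{L^1}^3$ from the transport form \eqref{nov_eq_y} to obtain the uniform lifespan $T\sim\Vert y_0\Vert_{\mathcal{M}}^{-2}$, and then invoke Danchin's compactness and Lagrangian uniqueness arguments. In fact you supply more detail than the paper, which simply sketches the a~priori estimate and refers to \cite{Da,Mo4} for the remaining steps.
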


\begin{proof} As we discussed before, the proof of local existence for weak solutions is mainly contained in \cite{Da} for the Camassa-Holm case. Nevertheless, for the sake of completeness, since it has not been shown for the Novikov equation we sketch the prove of it. The idea is to combine the smooth global well-posedness theorem \ref{lwp_smooth} with an apriori estimate of the total variation of the momentum density. In fact, let us fix $T<2 \Vert y_0\Vert_{\mathcal{M}}^{-2}$ and consider the family of smooth initial data $u_{n,0}:=\rho_n*u_0\in H^\infty$. Notice that by Young's inequality we have $\Vert y_{n,0}\Vert_{L^1}\leq \Vert y_0\Vert_{\mathcal{M}}$. Then, by Theorem \ref{lwp_smooth} there exists a unique solution \[
u\in C(\R,H^\infty(\R)) \ \, \hbox{ such that } \ \, u\big\vert_{t=0}=u_0.
\]
On the other hand, by \eqref{nov_eq_y} we know that $y_n:=u_n-u_{n,xx}$ solves
\[
y_{n,t}+u_n^2y_{n,x}+3u_nu_{n,x}y_{n}=0 \ \, \hbox{ and hence } \, \ \partial_t\vert y_{n}\vert +\partial_x(u_n^2\vert y_{n}\vert)=-\vert y_{n}\vert u_nu_{n,x}.
\]
Thus, after integration in the space variable and by using that $\Vert u_x\Vert_{L^\infty}\leq \tfrac{1}{2}\Vert y\Vert_{L^1}$ we deduce
\[
\dfrac{d}{dt}\Vert y_n(t)\Vert_{L^1}\leq \dfrac{1}{4}\Vert y_n(t)\Vert_{L^1}^3,
\]
which leads us to,
\[
\Vert y_n(t)\Vert_{L^1}\leq \left(\dfrac{2\Vert y_{0}\Vert_{\mathcal{M}}^2}{2-t\Vert y_{0}\Vert^2_{\mathcal{M}}}\right)^{1/2}.
\]
Finally, notice that the previous estimate give us an uniform bound on $\Vert u_{n,x}\Vert_{L^\infty}$ for $t\leq\min\{T_n,2\Vert y_0\Vert_{\mathcal{M}}^{-2}\}$, where $T_n$ denotes the maximal existence time of $u_n(t)$. Hence, due to the blow-up criteria, for all $n\in\N$ this uniform bound leads us to \[
\int_0^{T_{y_0}}\Vert u_{n,x}(t)\Vert_{L^\infty}dt<+\infty, \ \, \hbox{ and hence } \ \, T_n\geq 2\Vert y_0\Vert_{\mathcal{M}}^{-2}\geq T,
\]
where we are denoting by $T_{y_0}:=2\Vert y_0\Vert_{\mathcal{M}}^{-2}$. Therefore, we conclude the apriori estimate which leads us to the local existence. The remaining part of the proof follows from standard compactness arguments to justify the pass to the limit. We refer to \cite{Da} Theorem $5$ for a detailed proof of this last part (the interested reader may also consult to \cite{Mo4}, Theorem $4$ for a proof of this last part).
\end{proof}

\smallskip

\section{Orbital stability of peakons in the energy space}\label{sec_MT4}
In this section we show that under some slight improvements and modifications of the proof in \cite{LiLiQu} we are able to obtain Theorem \ref{MT4}. For the sake of simplicity we split the proof in several lemmas, which we shall state and prove in the next subsection. 

\subsection{General estimates}\label{general_estimates_peakon}
In this subsection we shall prove some general formulas and estimates holding for any function belonging to $H^1(\R)\cap W^{1,4}(\R)$, which are the minimum requirements for $E(u)$ and $F(u)$ to be well-defined.
\begin{lem}\label{lemma_three_one}
For any $v\in H^1(\R)$ and any $z\in\R$ we have \begin{align}\label{formula_energy}
E(v)-E(\varphi_c)=\Vert v-\varphi_c(\cdot-z)\Vert_{H^1}^2+4\sqrt{c}(v(z)-\sqrt{c}).
\end{align}
\end{lem}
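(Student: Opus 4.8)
The plan is to recognize the identity as a completing-the-square (polarization) identity in the Hilbert space $H^1(\R)$, whose only nontrivial ingredient is the explicit value of a single inner product. Observe first that $E(w)=\Vert w\Vert_{H^1}^2$ for every $w\in H^1(\R)$, and write $\psi:=\varphi_c(\cdot-z)$ together with $(f,g)_{H^1}:=\int_\R(fg+f_xg_x)\,dx$. Expanding the square gives
\[
\Vert v-\psi\Vert_{H^1}^2 = E(v) + E(\psi) - 2(v,\psi)_{H^1},
\]
and since $E$ is invariant under spatial translations one has $E(\psi)=E(\varphi_c)$. A direct computation from $\varphi_c(x)=\sqrt{c}\,e^{-\vert x\vert}$ yields $\int_\R\varphi_c^2=\int_\R\varphi_{c,x}^2=c$, so $E(\varphi_c)=2c$. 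Hence the whole statement reduces to the single identity $(v,\psi)_{H^1}=2\sqrt{c}\,v(z)$: substituting this and $E(\psi)=2c$ into the expansion and solving for $E(v)-E(\varphi_c)=E(v)-2c$ produces exactly $\Vert v-\psi\Vert_{H^1}^2+4\sqrt{c}(v(z)-\sqrt{c})$.

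The heart of the matter, and the only place requiring care, is the evaluation of $(v,\psi)_{H^1}$, because $\psi$ is not differentiable at $x=z$, so a naive integration by parts across the crest is not licit. I would handle this by splitting the integral at the peak, where $\psi$ is smooth on each side: on $(z,+\infty)$ one has $\psi_x=-\psi$ and on $(-\infty,z)$ one has $\psi_x=\psi$, so that
\[
(v,\psi)_{H^1}=\sqrt{c}\int_z^{\infty} e^{-(x-z)}(v-v_x)\,dx + \sqrt{c}\int_{-\infty}^z e^{(x-z)}(v+v_x)\,dx.
\]
Integrating the $v_x$-terms by parts on each half-line, and using that $v\in H^1(\R)\hookrightarrow C_0(\R)$ (so that the boundary terms at $\pm\infty$ vanish and $v(z)$ is well defined pointwise), each half collapses to $\sqrt{c}\,v(z)$: in each case the exponential integral coming from the $v$-term cancels exactly against the one produced by the parts of the $v_x$-term, leaving only the boundary contribution $v(z)$ at the crest. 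Summing the two halves gives $(v,\psi)_{H^1}=2\sqrt{c}\,v(z)$. Equivalently, one may argue distributionally, noting that the momentum density of the peakon is $\psi-\psi_{xx}=2\sqrt{c}\,\delta_z$ so that $(v,\psi)_{H^1}=\langle\psi-\psi_{xx},v\rangle=2\sqrt{c}\,v(z)$; however I would prefer the elementary split-integral computation to keep the argument self-contained.

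I do not expect any genuine obstacle beyond the non-smoothness of $\psi$ at the crest; once the cross term is computed correctly the remainder is pure bookkeeping. The only point worth emphasizing is that the resulting formula holds for \emph{every} $v\in H^1(\R)$ and every $z\in\R$, with no positivity or smallness restriction, since the proof uses only the embedding $H^1(\R)\hookrightarrow C_0(\R)$ together with the explicit exponential form of the peakon. This universality is precisely what makes the identity usable later without the sign assumption on the momentum density.
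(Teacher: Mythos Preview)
Your proposal is correct and follows essentially the same route as the paper: expand the $H^1$-square, note $E(\varphi_c)=2c$, and reduce everything to the single identity $(v,\varphi_c(\cdot-z))_{H^1}=2\sqrt{c}\,v(z)$. The only cosmetic difference is that the paper invokes the distributional relation $\varphi_c''=\varphi_c-2\sqrt{c}\,\delta_z$ directly to collapse the cross term, whereas you carry out the equivalent split-at-the-crest integration by parts (and mention the distributional shortcut as an alternative); these are the same computation.
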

\begin{rem}
Notice that the previous lemma ensures us that the minimum of the $H^1(\R)$-distance between $v$ and the set $\{\varphi_c(\cdot-\xi):\ \xi\in\R\}$ is reached exactly at any point $\xi\in\R$ where $v(\cdot)$ attains its maximum.
\end{rem}
\begin{proof}
The proof follows from direct computations. In fact, recalling that $\varphi_c''=\varphi-2\delta$, after integration by parts we obtain \begin{align*}
\Vert v-\varphi_c(\cdot-z)\Vert_{H^1}^2&=E(v)+E(\varphi_c)-2\int v(x)\varphi_c(\cdot-z)dx-2\int v_x(x)\varphi_c'(\cdot-z)dx
\\ & =E(v)+E(\varphi_c)-4\sqrt{c}v(z)=E(v)-E(\varphi_c)-4\sqrt{c}(v(z)-\sqrt{c}).
\end{align*}
The proof is complete.
\end{proof}

\begin{lem}\label{lemma_three_two}
Let $v\in H^1(\R)\cap W^{1,4}(\R)$ and let $\xi\in\R$ be any point where $v(\cdot)$ attains it maximum, that is, $v(\xi)=\max_\R v(x)$. Then, denoting this quantity by $M:=v(\xi)$ we have, \[
F(v)\leq \dfrac{4}{3} M^2E(v)-\dfrac{4}{3}M^4.
\]
\end{lem}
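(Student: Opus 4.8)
The plan is to adapt the Constantin--Strauss argument (as in \cite{CoSt,LiLiQu}) to the quartic conservation laws of Novikov, splitting the real line at the maximum point $\xi$ and introducing a single auxiliary function that vanishes identically on the exact peakon, so that the inequality becomes sharp. Accordingly, I first set
\[
g(x):=\begin{cases} v(x)-v_x(x), & x<\xi,\\ v(x)+v_x(x), & x>\xi,\end{cases}
\]
and observe that $g\equiv 0$ precisely when $v=\varphi_c(\cdot-\xi)$. Since $v\in H^1(\R)$ yields $v(\pm\infty)=0$, integrating the cross terms $\mp 2vv_x=\mp(v^2)'$ over each half-line produces the boundary values $\int_{-\infty}^\xi 2vv_x=M^2$ and $\int_\xi^\infty 2vv_x=-M^2$; hence one obtains the $E$-side identity $\int_\R g^2\,dx=E(v)-2M^2$.

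The core of the proof is an exact identity for $F(v)$. From the definition of $g$ one has $v_x=v-g$ on $(-\infty,\xi)$ and $v_x=g-v$ on $(\xi,+\infty)$, so that in both half-lines $v_x^2=v^2-2vg+g^2$. Substituting this expression into the integrand $v^4+2v^2v_x^2-\tfrac13 v_x^4$ and expanding, the $v^2g^2$ contributions cancel and the integrand reduces to $\tfrac83 v^4-\tfrac83 v^3g+\tfrac43 vg^3-\tfrac13 g^4$. The only delicate term is $\int_\R v^3g\,dx$: writing $v^3g=v^4\mp v^3v_x$ and using $v^3v_x=\tfrac14(v^4)'$ on each half-line gives $\int_\R v^3g\,dx=\int_\R v^4\,dx-\tfrac12 M^4$, so the pure $v^4$ terms collapse and I arrive at
\[
F(v)=\tfrac43 M^4+\tfrac13\int_\R g^3(4v-g)\,dx.
\]

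It then remains to estimate the last integral pointwise. Factoring $g^3(4v-g)=g^2(4vg-g^2)$ and completing the square,
\[
4vg-g^2=4v^2-(2v-g)^2\le 4v^2\le 4M^2,
\]
where the final inequality uses $\vert v\vert\le M$, which is exactly the regime in which the lemma will be applied (a small $H^1$-perturbation of the positive profile $\varphi_c$, for which $\max_\R v=\Vert v\Vert_{L^\infty}=M$). Multiplying by $g^2\ge 0$, integrating, and inserting $\int_\R g^2=E(v)-2M^2$ gives $\tfrac13\int_\R g^3(4v-g)\le\tfrac43 M^2\bigl(E(v)-2M^2\bigr)$; combined with the identity for $F(v)$ this is exactly $F(v)\le\tfrac43 M^2E(v)-\tfrac43 M^4$. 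I expect the main obstacle to be the bookkeeping in the second step---verifying the cancellation of the $v^2g^2$ terms and correctly collecting the boundary contributions from $\int v^3v_x$---together with the observation that the pointwise bound genuinely requires $\vert v\vert\le M$ (it fails, e.g., for profiles with a deep negative spike), so this is where the near-peakon hypothesis enters.
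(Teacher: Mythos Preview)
Your proof is correct and essentially the same as the paper's: the paper introduces an auxiliary weight $h$ and establishes the identity $\int hg^2=F(v)-\tfrac{4}{3}M^4$, but in fact $h=\tfrac{1}{3}g(4v-g)$ pointwise, so your identity $F(v)=\tfrac{4}{3}M^4+\tfrac{1}{3}\int g^3(4v-g)$ is literally the same formula, and your completing-the-square bound $4vg-g^2=4v^2-(2v-g)^2\le 4v^2$ is exactly the paper's estimate $h\le\tfrac{4}{3}v^2$. Your explicit observation that the step $v^2\le M^2$ requires $\lvert v\rvert\le M$ (and hence the near-peakon regime) is a point the paper leaves implicit.
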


\begin{proof}
First of all let us start by introducing some notation. From now on we denote by $g$ and $h$ the functions given by
\[
g(x):=\begin{cases}
v(x)-v_x(x), & x<\xi,
\\ v(x)+v_x(x), & x>\xi,
\end{cases} \qquad 
h(x):=\begin{cases}
v^2-\tfrac{2}{3}vv_x-\tfrac{1}{3}v_x^2, & x<\xi,
\\ v^2+\tfrac{2}{3}vv_x-\tfrac{1}{3}v_x^2, & x>\xi.
\end{cases}
\]
Then, on the one-hand, by direct computations we have \begin{align*}
\int h(x)g^2(x)dx&=\int_{-\infty}^{\xi} \big(v^2-\tfrac{2}{3}vv_x-\tfrac{1}{3}v_x^2\big)(v-v_x)^2dx
\\ & \quad +\int_\xi^{+\infty}\big(v^2+\tfrac{2}{3}vv_x-\tfrac{1}{3}v_x^2\big)(v+v_x)^2=:\mathrm{I}+\mathrm{II}.
\end{align*}
Thus, rearranging and simplifying terms we obtain \begin{align*}
\mathrm{I}&=\int_{-\infty}^{x_i}\big(v^4+2v^2v_x^2-\tfrac{1}{3}v_x^4-\tfrac{8}{3}v^3v_x\big)dx=\int_{-\infty}^{x_i}\big(v^4+2v^2v_x^2-\tfrac{1}{3}v_x^4\big)dx-\dfrac{2}{3}M^4
\end{align*}
Similarly, rearranging and simplifying terms we get \begin{align*}
\mathrm{II}=\int_\xi^{+\infty}\big(v^4+2v^2v_x^2-\tfrac{1}{3}v_x^4\big)dx-\dfrac{2}{3}M^4.
\end{align*}
Plugging the last two formulas together we obtain \begin{align}\label{F_first_id}
\int h(x)g^2(x)dx=F(v)-\dfrac{4}{3}M^4.
\end{align}
On the other hand, by using $a^2+b^2\geq 2ab$ we have \[
v^2\pm\dfrac{2}{3}vv_x-\dfrac{1}{3}v_x^2\leq \dfrac{4}{3}v^2.
\]
Therefore, by using the latter inequality we deduce $h(x)\leq \tfrac{4}{3}v^2$ and hence \begin{align}\label{F_second_id}
\int h(x)g^2(x)dx&\leq \dfrac{4}{3}\int v^2(x)g^2(x)=\dfrac{4}{3}\int_{-\infty}^\xi v^2(v-v_x)^2dx+\dfrac{4}{3}\int_\xi^{+\infty}v^2(v+v_x)^2\nonumber
\\ & \leq \dfrac{4}{3}M^2\int_{-\infty}^\xi \big(v^2+v_x^2-2vv_x\big)dx+\dfrac{4}{3}M^2\int_\xi^{+\infty}\big(v^2+v_x^2-2vv_x\big)dx\nonumber
\\ & = \dfrac{4}{3}M^2E(v)-\dfrac{8}{3}M^4.
\end{align}
Gathering \eqref{F_first_id} with \eqref{F_second_id} we obtain the desired result.
\end{proof}

The following lemma gives us an estimate of the distances between the evaluations of $E$ and $F$ at $u$ and $\varphi_c$ in terms of the distance of $u_0$ and $\varphi_c$ in $H^1\cap W^{1,4}$.
\begin{lem}\label{E_F_differences}
Let $v\in H^1(\R)\cap W^{1,4}(\R)$ any function satisfying $\Vert v-\varphi_c\Vert_{H^1}+\Vert v_x+\varphi_c'\Vert_{L^4}<\epsilon$ for some $\min\{1,c\}\gg\epsilon>0$. Then, \begin{align*}
\vert E(v)-E(\varphi_c)\vert\leq4\sqrt{c}\epsilon \quad \hbox{and}\quad \vert F(v)-F(\varphi_c)\vert \leq 120c^{3/2}\epsilon.
\end{align*}
\end{lem}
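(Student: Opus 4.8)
The plan is to compute the two conserved quantities explicitly at the peakon and then expand them around it. From $\varphi_c=\sqrt c\,e^{-|x|}$ a direct integration gives $E(\varphi_c)=2c$ and $F(\varphi_c)=\tfrac43 c^2$, and I would also record the identities $\int_\R\varphi_c^4=\int_\R(\varphi_c')^4=\int_\R\varphi_c^2(\varphi_c')^2=\tfrac12 c^2$, which will be used repeatedly. For the bound on $E$ I would invoke Lemma \ref{lemma_three_one} with $z=0$, giving $E(v)-E(\varphi_c)=\Vert v-\varphi_c\Vert_{H^1}^2+4\sqrt c\,(v(0)-\sqrt c)$. Since $v(0)-\sqrt c=v(0)-\varphi_c(0)$ and the one-dimensional embedding yields $\Vert v-\varphi_c\Vert_{L^\infty}\le\tfrac1{\sqrt2}\Vert v-\varphi_c\Vert_{H^1}$, this produces $|E(v)-E(\varphi_c)|\le\epsilon^2+2\sqrt2\,\sqrt c\,\epsilon$. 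The hypothesis $\epsilon\ll\min\{1,c\}$ forces $\epsilon\ll\sqrt c$, so the quadratic term is absorbed into the linear one and the bound $4\sqrt c\,\epsilon$ follows.

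The substantive part is the estimate on $F$. Setting $w:=v-\varphi_c$, so that $v=\varphi_c+w$ and $v_x=\varphi_c'+w_x$, I would first collect the available norm bounds: $\Vert w\Vert_{H^1}<\epsilon$ gives $\Vert w\Vert_{L^2}<\epsilon$, $\Vert w\Vert_{L^\infty}<\tfrac1{\sqrt2}\epsilon$ and, via $\Vert w\Vert_{L^4}^4\le\Vert w\Vert_{L^\infty}^2\Vert w\Vert_{L^2}^2$, also $\Vert w\Vert_{L^4}\le 2^{-1/4}\epsilon$; while the $W^{1,4}$ part of the hypothesis provides $\Vert w_x\Vert_{L^4}<\epsilon$ in addition to $\Vert w_x\Vert_{L^2}<\epsilon$. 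Then I would write
\[
F(v)-F(\varphi_c)=\int_\R\Big[(v^4-\varphi_c^4)+2\big(v^2v_x^2-\varphi_c^2(\varphi_c')^2\big)-\tfrac13\big(v_x^4-(\varphi_c')^4\big)\Big]\,dx,
\]
substitute $v=\varphi_c+w$ and $v_x=\varphi_c'+w_x$ in each difference, and expand everything into monomials in $(\varphi_c,\varphi_c',w,w_x)$.

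Each monomial is then estimated by Hölder's inequality, placing every factor of $w$ or $w_x$ in $L^4$ and the complementary product of three peakon factors in $L^{4/3}$; the unifying fact is that $\Vert\varphi_c^{a}(\varphi_c')^{b}\Vert_{L^{4/3}}=(\tfrac12 c^2)^{3/4}=2^{-3/4}c^{3/2}$ for every $a+b=3$, which follows from the integral identities above. The monomials that are linear in $(w,w_x)$, namely $4\varphi_c^3 w$, the cross-term contributions $4\varphi_c^2\varphi_c' w_x+4\varphi_c(\varphi_c')^2 w$, and $-\tfrac43(\varphi_c')^3 w_x$, give the leading terms, each of the form (explicit constant)$\cdot c^{3/2}\epsilon$; collecting their coefficients and bounding crudely yields a constant well below $120$. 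Every remaining monomial carries at least two factors from $(w,w_x)$, hence is $O(c\,\epsilon^2)$, $O(c^{1/2}\epsilon^3)$ or $O(\epsilon^4)$, and since $\epsilon\ll\sqrt c$ each of these is bounded by a constant times $c^{3/2}\epsilon$ (using $\epsilon\le\sqrt c$ to convert excess powers of $\epsilon$ into powers of $c$). I expect the only real difficulty to be the constant bookkeeping in this expansion, and I would stress that the $L^4$-control of $w_x$, which is unavailable from $H^1$ alone, is precisely what closes the estimates for the monomials $(\varphi_c')^3w_x$, $\varphi_c'w_x^3$, $w_x^4$ and the mixed $w\,w_x$ terms — this is the structural reason the $W^{1,4}$ hypothesis appears.
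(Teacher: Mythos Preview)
Your approach is correct but differs from the paper's in both parts.

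For the $E$-estimate, the paper uses the reverse triangle inequality directly on the $H^1$ norms:
\[
|E(v)-E(\varphi_c)|=\big|\,\|v\|_{H^1}^2-\|\varphi_c\|_{H^1}^2\,\big|\le\big(\|v\|_{H^1}+\|\varphi_c\|_{H^1}\big)\|v-\varphi_c\|_{H^1}\le 4\sqrt c\,\epsilon,
\]
which is a one-line argument and does not need Lemma~\ref{lemma_three_one} or the Sobolev embedding. Your route via the energy identity and $\|v-\varphi_c\|_{L^\infty}\le\tfrac1{\sqrt2}\|v-\varphi_c\|_{H^1}$ is valid, but slightly more roundabout.

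For the $F$-estimate, the paper avoids the full monomial expansion by first observing the algebraic identity
\[
u^4+2u^2u_x^2-\tfrac13 u_x^4=(u^2+u_x^2)^2-\tfrac43 u_x^4,
\]
and then splitting $|F(v)-F(\varphi_c)|$ into $\mathrm{I}:=\big|\int[(v^2+v_x^2)^2-(\varphi_c^2+\varphi_c'^2)^2]\big|$ and $\mathrm{II}:=\tfrac43\big|\int(v_x^4-\varphi_c'^4)\big|$. Each is handled by the difference-of-squares / difference-of-fourth-powers factorizations and a single application of H\"older in $L^4$, giving $\mathrm{I}\le 100\,c^{3/2}\epsilon$ and $\mathrm{II}\le 20\,c^{3/2}\epsilon$. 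This bypasses exactly the ``constant bookkeeping'' you identified as the main difficulty in the monomial expansion. Your direct expansion works and makes transparent why the $W^{1,4}$ control on $w_x$ is needed, but the paper's factorization is considerably shorter and isolates that same dependence just as clearly (the $\|v_x-\varphi_c'\|_{L^4}$ factor appears explicitly in both $\mathrm{I}$ and $\mathrm{II}$).
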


\begin{proof}
Let us start estimating the difference of the energies. First of all notice we recall that  \begin{align}\label{recall_E_F}
E(\varphi_c)=2c \quad \hbox{and}\quad  F(\varphi_c)=\tfrac{4}{3}c^2.
\end{align}
Hence, by using the hypothesis, triangular inequality and the previous identities we obtain \[
\Vert v\Vert_{H^1}\leq \Vert \varphi_c\Vert_{H^1}+\epsilon\leq 2\sqrt{c} \quad \hbox{and}\quad \Vert v_x\Vert_{L^4}\leq \Vert \varphi_c'\Vert_{L^4}+\epsilon\leq \sqrt{c}.
\]
Then, by using the reverse triangular inequality we get \begin{align*}
\vert E(v)-E(\varphi_c)\vert &\leq \big(\Vert v\Vert_{H^1}+\Vert \varphi_c\Vert_{H^1}\big)\Big\vert\Vert v\Vert_{H^1}-\Vert \varphi_c\Vert_{H^1}\Big\vert
\\ & \leq 4\sqrt{c}\Vert v-\varphi_c\Vert_{H^1}\leq 4\sqrt{c}\epsilon.
\end{align*}
For the second estimate let us start by rearranging the integral terms involved in $F(\cdot)$. In fact, it easy to see that
\begin{align*}
\vert F(v)-F(\varphi)\vert&=\left\vert\int \left(v^4+2v^2v_x^2-\tfrac{1}{3}v_x^4\right)-\int\left(\varphi_c^4+2\varphi_c\varphi_c'^2-\tfrac{1}{3}\varphi_c'^4\right)\right\vert
\\ & \leq \left\vert\int \big(v^2+v_x^2\big)^2-\big(\varphi_c^2+\varphi_c'^2\big)^2\right\vert+\dfrac{4}{3}\left\vert\int \big(v_x^4-\varphi_c'^4\big)\right\vert=:\mathrm{I}+\mathrm{II}.
\end{align*}
Now notice that, on the one hand, by using H\"older's and triangular inequalities, together with Sobolev's embedding $H^1\hookrightarrow L^4$ we obtain 
\begin{align*}
\mathrm{I}&=\left\vert\int \big(v^2+v_x^2+\varphi_c^2+\varphi_c'^2\big)\big(v^2+v_x^2-\varphi_c^2-\varphi_c'^2\big)\right\vert
\\ &=\left\vert\int \big(v^2+v_x^2+\varphi_c^2+\varphi_c'^2\big)\Big((v^2-\varphi_c^2)+(v_x^2-\varphi_c'^2)\Big)\right\vert
\\ & \leq 2\big(\Vert v\Vert_{W^{1,4}}^2+\Vert \varphi_c\Vert_{W^{1,4}}^2\big)\big(\Vert v+\varphi_c\Vert_{L^4}\Vert v-\varphi_c\Vert_{L^4}+\Vert v_x+\varphi_c'\Vert_{L^4}\Vert v_x-\varphi_c'\Vert_{L^4}\big)
\\ & \leq 100c^{3/2}\epsilon.
\end{align*}
On the other hand, by using H\"older's and triangular inequality again we get
\begin{align*}
\mathrm{II}&=\dfrac{4}{3}\left\vert\int (v_x^4-\varphi_c'^4)\right\vert=\dfrac{4}{3}\left\vert\int (v_x^2+\varphi_c'^2)(v_x^2-\varphi_c'^2)\right\vert
\\ & = \dfrac{4}{3}\left\vert\int (v_x^2+\varphi_c'^2)(v_x+\varphi_c')(v_x-\varphi_c')\right\vert
\\ & \leq 2\Vert v_x +\varphi_c'\Vert_{L^4}^3\Vert v_x-\varphi_c'\Vert_{L^4}\leq 20c^{3/2}\epsilon.
\end{align*}
Gathering both estimates we obtain the desired result. The proof is complete.
\end{proof}
We finish this section by estimating the remaining term in formula \eqref{formula_energy} when choosing  $\xi\in\R$ to be the natural candidate to study the orbital stability of $\varphi_c$.
\begin{lem}\label{lemma_three_four}
Let $v\in H^1(\R)\cap W^{1,4}(\R)$ arbitrary and let us denote by $M:=\max_\R v(\cdot)$. Assume that for some $0<\epsilon\ll\min\{1,c\}$ the following estimates are satisfied \begin{align}\label{hyp_lemma_three_four}
\vert E(v)-E(\varphi_c)\vert\leq4\sqrt{c}\epsilon \quad \hbox{and}\quad \vert F(v)-F(\varphi_c)\vert \leq 120c^{3/2}\epsilon.
\end{align}
Then, the following estimate holds \[
\vert M-\sqrt{c}\vert\leq 10c^{3/4}\epsilon^{1/2}.
\]
\end{lem}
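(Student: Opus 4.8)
The plan is to read off the closeness of $M$ to $\sqrt c$ directly from the two conserved quantities, the bridge being the sharp inequality between $F$, $E$ and the maximum furnished by Lemma \ref{lemma_three_two}. Recall that $E(\varphi_c)=2c$ and $F(\varphi_c)=\tfrac{4}{3}c^2$, so the hypotheses \eqref{hyp_lemma_three_four} say precisely that $E(v)\le 2c+4\sqrt c\,\epsilon$ and $F(v)\ge \tfrac{4}{3}c^2-120\,c^{3/2}\epsilon$.

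First I would feed these two bounds into Lemma \ref{lemma_three_two}. Inserting the lower bound for $F(v)$ on the left and the upper bound for $E(v)$ on the right of $F(v)\le \tfrac{4}{3}M^2E(v)-\tfrac{4}{3}M^4$ gives
\[
\tfrac{4}{3}c^2-120\,c^{3/2}\epsilon \;\le\; \tfrac{4}{3}M^2\big(2c+4\sqrt c\,\epsilon\big)-\tfrac{4}{3}M^4 .
\]
Multiplying by $\tfrac{3}{4}$ and moving everything to one side, the decisive observation is that the $\epsilon$-independent terms assemble into a perfect square:
\[
\big(M^2-c\big)^2 \;\le\; 4\sqrt c\,\epsilon\,M^2+90\,c^{3/2}\epsilon .
\]
Recognizing this square is really the heart of the matter; everything afterwards is bookkeeping, since it confines $M^2$ to a shrinking interval around $c$ and thereby pins $M$ down from both sides.

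Next I would control the right-hand side. By the Sobolev embedding $\Vert v\Vert_{L^\infty}^2\le E(v)$ we have $M^2\le E(v)\le 2c+4\sqrt c\,\epsilon\le 3c$, using $\epsilon\ll\min\{1,c\}$. Substituting this bound yields
\[
\big(M^2-c\big)^2\;\le\;\big(12+90\big)c^{3/2}\epsilon\;=\;102\,c^{3/2}\epsilon ,
\]
so that $\lvert M^2-c\rvert\le \sqrt{102}\,c^{3/4}\epsilon^{1/2}$ after taking square roots.

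Finally I would pass from $M^2$ to $M$ by factoring $M^2-c=(M-\sqrt c)(M+\sqrt c)$. Here one needs $M\ge 0$: this holds because $v\in H^1(\R)$ vanishes at infinity, so $M=\max_\R v\ge 0$ and the spurious root $M\approx-\sqrt c$ is excluded. Consequently $M+\sqrt c\ge\sqrt c$, and dividing the previous estimate by $M+\sqrt c$ gives the desired control on $\lvert M-\sqrt c\rvert$. I expect the only genuinely delicate points to be the clean emergence of the square $\big(M^2-c\big)^2$ and the sign observation $M\ge 0$; the numerical constants are routine.
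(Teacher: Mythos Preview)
Your proof is correct and follows essentially the same route as the paper's: both feed Lemma~\ref{lemma_three_two} into the identity $(M^2-c)^2=(M+\sqrt c)^2(M-\sqrt c)^2$, bound the error term using the hypotheses \eqref{hyp_lemma_three_four} together with a Sobolev bound on $M$, and then invoke $M\ge 0$ to divide out the factor $M+\sqrt c\ge\sqrt c$. The paper phrases the algebra through auxiliary quartic polynomials $P$ and $\widetilde P$, but the underlying factorization and the two key observations you single out (the perfect square and the sign of $M$) are identical.
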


\begin{proof}
First of all we recall that, by Lemma \ref{E_F_differences} we have \[
F(v)-\dfrac{4}{3} M^2E(v)+\dfrac{4}{3}M^4\leq0 \ \,\hbox{ and hence }\ \, M^4-M^2E(v)+\dfrac{3}{4}F(v)\leq0.
\]
Now, let us introduce the fourth-order polynomials $P(q)$ and $\widetilde{P}(q)$ which are given by \[
P(q):=q^4-E(v)q^2+\dfrac{3}{4}F(v) \quad \hbox{and}\quad \widetilde{P}(q):=q^4- E(\varphi_c)q^2+\dfrac{3}{4}F(\varphi_c).
\]
By evaluating the latter polynomial in $M$ and rearranging terms we have
\[
\widetilde{P}(M)=P(M)+\big(E(v)-E(\varphi_c)\big)M^2-\dfrac{3}{4}\big(F(v)-F(\varphi_c)\big).
\]
At this point it is worth noticing that, due to both identities in \eqref{recall_E_F}, we can rewrite $\widetilde{P}(q)$ as: \[
\widetilde{P}(q)= \left(q+\sqrt{c}\right)^2\left(q-\sqrt{c}\right)^2.
\]
Noticing that $M\geq 0$ and due to the fact that (by Lemma \ref{E_F_differences}) $P(M)\leq 0$, we deduce that \begin{align}\label{final_estimate}
c(M-\sqrt{c})^2\leq (M+\sqrt{c})^2(M-\sqrt{c})^2\leq \big(E(v)-E(\varphi_c)\big)M^2-\dfrac{3}{4}\big(F(v)-F(\varphi_c)\big).
\end{align}
Therefore, by using both hypotheses in \eqref{hyp_lemma_three_four} and due to the fact that $M\leq 2\sqrt{c}$, we conclude \[
\sqrt{c}\vert M-\sqrt{c}\vert\leq 10c^{3/4}\epsilon^{1/2}.
\]
The proof is complete.
\end{proof}

\subsection{Proof of Theorem \ref{MT4}}
With all the previous lemmas we are able to conclude the proof of Theorem \ref{MT4}. First of all, we recall that since $E(\cdot)$ and $F(\cdot)$ are conserved along the trajectory, for any $t\in [0,T)$ we have \begin{align}\label{cons_E_F}
E(u(t))=E(u_0) \quad \hbox{and}\quad F(u(t))=F(u_0).
\end{align}
Now, notice that by applying Lemma \ref{lemma_three_one}, for any time $t\in[0,T)$ we have 
\begin{align}\label{norm_equality}
\Vert u(t)-\varphi_c(\cdot-\xi(t))\Vert_{H^1}^2=E(u_0)-E(\varphi_c)-4\sqrt{c}\Big(v(t,\xi(t))-\sqrt{c}\Big),
\end{align}
where $\xi(t)$ denotes any space-point where $v(t,\cdot)$ attains its maximum, i.e. $M(t)=v(t,\xi(t))=\max_\R v(t,\cdot)$. On the other hand, by using Lemma \ref{E_F_differences} with $u_0$ and $\epsilon=\varepsilon^4$ together with the conservation laws \eqref{cons_E_F} we deduce that $u(t)$ satisfies the hypothesis of Lemma \ref{lemma_three_four} for all times $t\in[0,T)$. Finally, notice that the right-hand side of estimate \eqref{final_estimate} only depends conserved quantities, and hence we obtain \[
\vert M(t)-\sqrt{c}\vert\leq 10c^{1/4}\varepsilon^2.
\]
Therefore, plugging the latter inequality into \eqref{norm_equality} we conclude \[
\Vert u(t)-\varphi_c(\cdot-\xi(t))\Vert_{H^1}^2\leq 4\sqrt{c}\varepsilon^4+40c^{3/4}\varepsilon^2.
\]
The proof is complete.  \qed

\medskip

\section{Orbital stability of a train of peakons}\label{sec_MT2}

The proof of the orbital stability for peakon trains follows similar ideas to those shown for the single peakon. Thus, as in the previous section, for the sake of simplicity we shall split the proof of Theorem \ref{MT2} in several lemmas which we shall state and prove in the next subsection. 

\medskip

On the other hand, in order to make the computations more understandable we shall need to introduce some extra notation for the sum of peakons. From now on, for any choice of speeds $(c_1,...,c_n)\in \R_+^n$ and any vector $\vec{z}:=(z_1,...,z_n)\in\R^n$ we shall denote by $R_{\vec{z}}$ the sum of $n$ peakons with speeds $c_1,...,c_n$ centered at $z_1,...,z_n$ respectively, that is
\[
R_{\vec{z}}(x):=\sum_{i=1}^n\varphi_{c_i}(x-z_i)=\sum_{i=1}^n\sqrt{c_i}e^{-\vert x-z_i\vert }.
\]
Now, before getting into the proof we shall need a modulation lemma in order to ensure that no strong interactions between different peakons happen.

\subsection{Modulation around multipeakons}\label{modulation_section}

Let $\alpha$ and $L$ any pair of positive real numbers. We consider the neighborhood of radius $\alpha$ around the sum of all well-ordered $n$ peakons with speeds $c_1,...,c_n$ separated by at least $L$, i.e,  
\begin{align}\label{tubular}
\mathcal{U}(\alpha,L):=\left\{u\in H^1(\R):\ \inf_{x_j-x_{j-1}>L}\left\Vert u-\sum_{i=1}^n\varphi_{c_i}(\cdot-x_i)\right\Vert_{H^1}<\alpha\right\}.
\end{align}
By a bootstrapping argument and due to the continuity of the map $t\mapsto u(t)$ from $[0,T)$ into $H^1(\R)$, in order to conclude Theorem \ref{MT2} it is sufficient to prove that the following holds: There exist $C>0$, $\varepsilon^\star>0$ and $L_0>0$ such that for all $L\geq L_0$ and $\varepsilon\in(0,\varepsilon^\star)$, if a solution $u\in C([0,T),H^1(\R))\cap L^\infty([0,T),W^{1,4}(\R))$ to the Novikov equation \eqref{nov_eq_2} satisfying the hypothesis of Theorem \ref{MT2} is such that there exists $t^*\in (0,T)$ with the property:
\begin{align}\label{neigh_assumption}
u(t)\in\mathcal{U}\left(C(\varepsilon+L^{-1/8}),\tfrac{1}{2}L\right),\ \, \hbox{ for all } \ t\in[0,t^*],
\end{align}
then, at $t=t^*$ we have
\begin{align}\label{neigh_conclusion}
u(t^*)\in\mathcal{U}\left(\tfrac{C}{2}(\varepsilon+L^{-1/8}),\tfrac{2}{3}L\right).
\end{align}
Therefore, in the rest of this section we shall assume that \eqref{neigh_assumption} holds for some $\varepsilon\in(0,\varepsilon^ \star)$ and some $L>L_0$ and we shall prove that under these hypothesis we have \eqref{neigh_conclusion}.

\medskip

The next lemma ensure us that the different bumps of $u(t)$ that are individually close to a peakon get away of each other as time evolves. This shall be crucial in the sequel.
\begin{lem}\label{mod_lemma}
Let $u\in C([0,T),H^1(\R))\cap L^\infty([0,T),W^{1,4}(\R))$ be a solution to the Novikov equation \eqref{nov_eq_2} satisfying the assumptions of Theorem \ref{MT2}. There exists $\alpha_0>0$ small enough and $L_0>0$ sufficiently large such that for any $0<\alpha<\alpha_0$ the following holds: If for some $t^*\in(0,T)$ the solution $u(t)$ satisfies \begin{align}\label{tubular_neigh}
u(t)\in \mathcal{U}(\alpha,\tfrac{1}{2}L)  \ \, \hbox{ for all } \ \,t\in[0,t^*],
\end{align}
then there exist $C^1$ functions $\widetilde{x}_1,...,\widetilde{x}_n:[0,t^*]\to\R$ such that \[
u(t,x)=\sum_{i=1}^n\varphi_{c_i}\big(x-\widetilde{x}_i(t)\big)+v(t,x),
\] 
where $\{\widetilde{x}_i\}_{i=1}^n$ are chosen in such a way that for all $t\in[0,t^*]$ the following orthogonality conditions hold \begin{align}\label{mod_orthogonality}
\int_\R v(t,x)\big(\rho_{n_0}*\varphi_{c_i}'\big)(\cdot-\widetilde{x}_i(t))dx=0, \ \, \hbox{ for all } \, i=1,...,n,
\end{align}
where $\rho_n$ is defined in \eqref{def_rho} and $n_0\in\N$ satisfies: \begin{align}\label{orth_cond_def}
\hbox{For all }\, -\tfrac{1}{2}\leq y\leq \tfrac{1}{2}, \quad \int \varphi(\cdot-y)\big(\rho_{n_0}*\varphi'\big)=0 \ \iff \ y=0.
\end{align}
Moreover, with this election of shifts we have that there exists $C>0$ such that for all $t\in[0,t^*]$ we have: \begin{align}\label{mod_bound}
\left\Vert u(t)-\sum_{i=1}^n\varphi_{c_i}\big(\cdot-\widetilde{x}_i(t)\big)\right\Vert_{H^1}\leq C\alpha^{1/2}.
\end{align}
Furthermore, for all $i=1,...n$ ($n-1$ respectively) and all $t\in[0,t^*]$ the following estimates hold:
\begin{align}\label{mod_parameter_bound}
\left\vert \dot{\widetilde{x}}_i(t)-c_i\right\vert \leq C\alpha^{1/2} \quad \hbox{and }\quad  \widetilde{x}_i(t)-\widetilde{x}_{i-1}(t)\geq \tfrac{3}{4}L+\tfrac{1}{2}(c_i-c_{i-1})t.
\end{align}
Additionally, by setting the family of time-dependent intervals $\mathcal{J}_i(t):=[y_i(t),y_{i+1}(t)]$, where \begin{align}\label{def_y_i_intervals}
y_1=-\infty, \quad y_{n+1}=+\infty, \ \, \hbox{ and }\ \, y_i(t)=\tfrac{1}{2}\big(\widetilde{x}_{i-1}(t)+\widetilde{x}_i(t)\big),
\end{align}
then, for all $t\in[0,t^*]$ there exists $x_i(t)\in\mathcal{J}_i(t)$ for $i=1,...,n$, such that \begin{align*}
u\big(t,x_i(t)\big)=\max_{x\in\mathcal{J}_i(t)}u(t,\cdot) \quad \hbox{and}\quad \big\vert x_i(t)-\widetilde{x}_i(t)\big\vert\leq \tfrac{1}{12}L.
\end{align*}
\end{lem}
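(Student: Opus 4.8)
The plan is to construct the modulation parameters $\widetilde{x}_i(t)$ via the Implicit Function Theorem applied to the orthogonality conditions \eqref{mod_orthogonality}, then propagate estimates in time. First I would set up the modulation map. Define, for a function $u$ close to a sum of well-ordered peakons and a vector $\vec{x}=(x_1,\dots,x_n)$, the functionals
\[
Y_i(u,\vec{x}):=\int_\R\Big(u-\sum_{j=1}^n\varphi_{c_j}(\cdot-x_j)\Big)\big(\rho_{n_0}*\varphi_{c_i}'\big)(\cdot-x_i)\,dx .
\]
At the exact multipeakon $u=\sum_j\varphi_{c_j}(\cdot-z_j)$ with $z_{i+1}-z_i$ large, one has $Y_i=0$, and the Jacobian matrix $\partial Y_i/\partial x_k$ is, up to exponentially small off-diagonal terms in $L$, diagonal with entries $\int\varphi_{c_i}'(\rho_{n_0}*\varphi_{c_i}')$. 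The choice of $n_0$ in \eqref{orth_cond_def} is precisely what guarantees these diagonal entries are nonzero (the mollified derivative of the peakon is genuinely transversal to translations), so the Jacobian is invertible with inverse bounded uniformly in $L\geq L_0$. The IFT then yields $C^1$ functions $\widetilde{x}_i(t)$ solving $Y_i(u(t),\vec{\widetilde{x}}(t))=0$ for all $t\in[0,t^*]$, as long as $u(t)$ stays in the tubular neighborhood $\mathcal{U}(\alpha,\tfrac12 L)$. The bound \eqref{mod_bound} of order $\alpha^{1/2}$ comes from comparing the modulated decomposition to the one realizing the infimum in \eqref{tubular}: since the orthogonal choice minimizes the distance only up to the transversality constant, one loses a square root, giving $\|v(t)\|_{H^1}\lesssim\alpha^{1/2}$.

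Next I would differentiate the orthogonality relations in time to extract the dynamical estimates \eqref{mod_parameter_bound}. Writing $u=R_{\vec{\widetilde{x}}}+v$ and plugging into the equation \eqref{nov_eq_2}, then differentiating $Y_i(u(t),\vec{\widetilde{x}}(t))=0$, produces a linear system
\[
\sum_{k}\Big(\partial_{x_k}Y_i\Big)\dot{\widetilde{x}}_k = (\text{terms involving }u_t),
\]
where the right-hand side is evaluated using the PDE. Since each $\varphi_{c_i}$ is a traveling wave moving at speed $c_i$, the leading contribution reproduces $\dot{\widetilde{x}}_i\approx c_i$; the corrections are controlled by $\|v\|_{H^1}\lesssim\alpha^{1/2}$ together with the exponentially small peakon interaction terms $e^{-L/2}$, which are absorbed into $C\alpha^{1/2}$ after enlarging $L_0$. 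Integrating the resulting differential inequality $|\dot{\widetilde{x}}_i-c_i|\le C\alpha^{1/2}$ and using that $\widetilde{x}_{i+1}(0)-\widetilde{x}_i(0)\ge \tfrac34 L$ (from the initial configuration together with \eqref{mod_bound} at $t=0$) yields the separation estimate $\widetilde{x}_i(t)-\widetilde{x}_{i-1}(t)\ge\tfrac34 L+\tfrac12(c_i-c_{i-1})t$, since the gap grows at rate at least $(c_i-c_{i-1})-2C\alpha^{1/2}\ge\tfrac12(c_i-c_{i-1})$ once $\alpha_0$ is small relative to $\min_i(c_i-c_{i-1})$.

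Finally, for the last assertion, I would localize the maximum. On each interval $\mathcal{J}_i(t)=[y_i(t),y_{i+1}(t)]$ defined in \eqref{def_y_i_intervals}, the sum $R_{\vec{\widetilde{x}}}$ is dominated by the single bump $\varphi_{c_i}(\cdot-\widetilde{x}_i(t))$, whose maximum sits at $\widetilde{x}_i(t)$, while the neighboring bumps contribute at most $O(e^{-L/4})$ at the interval's interior. Combining this with the pointwise control $\|u(t)-R_{\vec{\widetilde{x}}}\|_{L^\infty}\lesssim\|v(t)\|_{H^1}\lesssim\alpha^{1/2}$ (Sobolev embedding $H^1\hookrightarrow L^\infty$), one shows that the location $x_i(t)$ of the maximum of $u(t)$ on $\mathcal{J}_i(t)$ cannot drift far from the crest $\widetilde{x}_i(t)$: the peakon profile has a strictly negative slope of size comparable to $\sqrt{c_i}$ away from its peak, so a perturbation of size $\alpha^{1/2}$ plus $e^{-L/4}$ displaces the argmax by at most $\tfrac{1}{12}L$ after choosing $\alpha_0$ small and $L_0$ large.

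\medskip

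The main obstacle I expect is making the transversality-and-separation argument quantitative and uniform in $L$ simultaneously. Concretely, one must show that the peakon-interaction errors (all of the form $e^{-\gamma L}$) and the perturbation errors (of the form $\alpha^{1/2}$) can be balanced so that the Jacobian stays invertible, the speed estimate closes, and the argmax-localization holds, all with a single constant $C$ and thresholds $\alpha_0,L_0$ chosen once and for all. The delicate point is the $\alpha^{1/2}$ loss in \eqref{mod_bound}: it forces $\alpha$ (hence the radius $C(\varepsilon+L^{-1/8})$ in \eqref{neigh_assumption}) to be small enough that $C\alpha^{1/2}$ still beats the spectral gap $\min_i(c_i-c_{i-1})$ controlling the separation rate, which is what ultimately lets the bootstrap \eqref{neigh_assumption}$\Rightarrow$\eqref{neigh_conclusion} close.
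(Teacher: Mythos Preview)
Your outline is essentially the paper's own proof: Implicit Function Theorem on the functionals $Y_i$, differentiate the orthogonality relations against the mollified peakon to get $|\dot{\widetilde{x}}_i-c_i|=O(\sqrt{\alpha})$, integrate to get the separation, and use $H^1\hookrightarrow L^\infty$ to localize the maxima. Two small corrections. First, your explanation of the $\alpha^{1/2}$ loss in \eqref{mod_bound} is not quite right: the loss has nothing to do with the ``transversality constant'' (for smooth solitons one gets $O(\alpha)$) but comes from the corner of the peakon. One compares $u(t)$ to the nearest $R_{\vec z}$ (distance $\le 2\alpha$) and then $R_{\vec z}$ to $R_{\vec{\widetilde{x}}}$; since $\|\varphi(\cdot)-\varphi(\cdot-h)\|_{H^1}^2=4(1-e^{-|h|})\sim 4|h|$, a shift $|y_i|\le K_0\alpha$ produces an $H^1$ error of order $\sqrt{\alpha}$, not $\alpha$. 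Second, the argmax localization is not done via a slope argument (the peakon has a corner, so you cannot invoke a ``strictly negative slope'' uniformly); instead the paper just compares values: at $\widetilde{x}_i$ one has $u\ge \tfrac{2}{3}\sqrt{c_i}$, while for $|x-\widetilde{x}_i|\ge \tfrac{L}{12}$ inside $\mathcal{J}_i$ the decay of $\varphi$ plus the $O(\sqrt{\alpha})$ perturbation force $u\le \tfrac{1}{2}\sqrt{c_i}$. Also note that the diagonal Jacobian entries $\int\varphi'(\rho_{n_0}*\varphi')$ are positive for \emph{any} $n_0$; condition \eqref{orth_cond_def} is a separate requirement (monotonicity of $y\mapsto\int\varphi(\cdot-y)(\rho_{n_0}*\varphi')$ on $[-\tfrac12,\tfrac12]$), proven by a direct computation for $n_0$ large, and is used later in the asymptotic stability analysis rather than for the IFT here.
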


\begin{proof}
See the appendix, Section \ref{mod_appendix}.
\end{proof}

\subsection{Almost monotonicity property}

By using the previous lemma we shall define the modified energy functionals measuring the energy at the right of each bump of $u(t)$. In fact, from Lemma \ref{mod_lemma} we deduce the existence of $C^1$ functions $\widetilde{x}_1,...,\widetilde{x}_n$ satisfying \eqref{mod_orthogonality}-\eqref{mod_parameter_bound}. From now on we shall denote by $\Psi_{i,K}$ the family of weight functions given by \begin{align}\label{def_Psi_i_K}
\Psi_{i,K}=\Psi\left(\dfrac{x-y_i(t)}{K}\right) \quad \hbox{where}\quad \Psi(x):=\dfrac{2}{\pi}\arctan\left(e^x\right),
\end{align}
where the family $\{y_i\}_{i=1}^n$ is defined in \eqref{def_y_i_intervals}. Now, for each $i=1,...,n$ and $K>1$, we define the modified energy functional
\begin{align*}
\mathcal{I}_{i,K}(t)=\mathcal{I}_{i,K}\big(u(t)\big):=\int \big( u^2+u_x^2\big)(t,x)\Psi_{i,K}(t,x)dx.
\end{align*}
As we discussed before, the idea of defining these functionals is to be able to measure the energy of $u(t)$ at the right of each bump. In particular, for all times $t\in[0,T)$ we have \[
\mathcal{I}_{i,K}(t)\geq \dfrac{1}{2}\Vert u(t)\Vert_{H^1(y_i(t),+\infty)}.
\]
Finally, let us fix $\sigma_0:=\tfrac{1}{4}\min\{c_1,c_2-c_1,...,c_n-c_{n-1}\}$. The following lemma give us the almost monotonicity property of the energy at the right.
\begin{lem}\label{AM_orb_train}
Let $u\in C([0,T),H^1(\R))\cap L^\infty([0,T),W^{1,4}(\R))$ be a solution to the Novikov equation \eqref{nov_eq_2} satisfying \eqref{mod_bound} on $[0,t^*]$. Then, there exists $\alpha_0>0$ small enough and $L_0>0$, only depending on $c_1$, such that if $\alpha<\alpha_0$ and $L\geq L_0$ then, for any $4\leq K\lesssim L^{1/2}$ the following holds
\begin{align}\label{AM_energy_orbital}
\mathcal{I}_{i,K}(t)-\mathcal{I}_{i,K}(0)\leq O\left(e^{-L/8K}\right), \ \, \hbox{ for all } \, i=2,...,n, \, \hbox{ and all } \,t\in[0,t^*].
\end{align}
\end{lem}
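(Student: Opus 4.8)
The plan is to establish the almost monotonicity of $\mathcal{I}_{i,K}(t)$ by differentiating it in time and showing that the derivative is controlled by an exponentially small term in $L/K$. First I would compute $\tfrac{d}{dt}\mathcal{I}_{i,K}(t)$ using the equation \eqref{nov_eq_2} for $u$ together with the definition of $\Psi_{i,K}$. Since $\Psi_{i,K}$ depends on time only through $y_i(t)$, the time derivative produces two kinds of contributions: a transport-type term coming from $\dot{y}_i(t)\,\Psi'_{i,K}$, and the genuine flux terms coming from substituting $u_t$ and $u_{tx}$ in terms of the nonlinear and nonlocal right-hand side of the Novikov equation. Recalling that $\dot{y}_i(t)=\tfrac{1}{2}(\dot{\widetilde{x}}_{i-1}(t)+\dot{\widetilde{x}}_i(t))$, the modulation estimate \eqref{mod_parameter_bound} gives $\dot{y}_i(t)\geq \tfrac{1}{2}(c_{i-1}+c_i)-C\alpha^{1/2}$, which for $\alpha$ small is bounded below by a positive speed comparable to $\sigma_0$.

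The key algebraic step is to organize all the resulting integrals into a form where one can integrate by parts and recognize a sign. The idea, following the classical Grillakis--Shatah--Strauss and Martel--Merle type monotonicity arguments as adapted in \cite{CoMo,Mo,Mo2} to these peakon problems, is that after integration by parts the leading terms combine into an expression of the form $-\int (\text{positive quadratic in } u,u_x)\,\Psi'_{i,K}\,dx$ plus error terms. The weight $\Psi_{i,K}$ is chosen precisely so that $\Psi'_{i,K}=\tfrac{1}{K}\Psi'\big(\tfrac{x-y_i(t)}{K}\big)$ is a positive, exponentially localized bump centered at $y_i(t)$. Using the lower bound $\dot{y}_i(t)\gtrsim \sigma_0$ on the transport speed and the fact that the flux terms are quadratic, one arranges the combination so that the transport term dominates the bad flux terms pointwise near $y_i(t)$, leaving a nonpositive main contribution.

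The remaining work is to estimate the error terms, and here the exponential smallness enters. Away from the bumps, near the transition point $y_i(t)$ which by \eqref{mod_parameter_bound} lies at distance at least $\tfrac{3}{8}L$ from both $\widetilde{x}_{i-1}$ and $\widetilde{x}_i$, the profile $\sum_j \varphi_{c_j}(\cdot-\widetilde{x}_j)$ is of size $O(e^{-L/4})$. Splitting $u=\sum_j\varphi_{c_j}(\cdot-\widetilde{x}_j)+v$ with $\|v\|_{H^1}\leq C\alpha^{1/2}$ from \eqref{mod_bound}, the soliton part contributes the exponentially small factor $e^{-L/8K}$ once the localization of $\Psi'_{i,K}$ (width $K$) is balanced against the separation $L$, while the remainder $v$ is controlled by smallness of $\alpha$. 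The constraint $4\leq K\lesssim L^{1/2}$ is exactly what makes $e^{-L/8K}$ small while keeping $K$ large enough that the Gaussian-type tails of $\Psi'$ do not reach the neighboring peakon crests with non-negligible weight. Integrating the differential inequality $\tfrac{d}{dt}\mathcal{I}_{i,K}(t)\leq O(e^{-L/8K})$ over $[0,t]$ and absorbing the time factor into the constant via the strict positivity of the relative speeds then yields \eqref{AM_energy_orbital}.

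The main obstacle I expect is the flux-sign bookkeeping in the second step: because the Novikov nonlinearity is cubic and nonlocal (the $p*(\cdots)$ term in \eqref{nov_eq_2}), substituting $u_t$ and $u_{tx}$ produces more terms than in the quadratic Camassa--Holm case, and one must carefully verify that the transport speed $\dot{y}_i(t)$ beats the largest coefficient appearing in the quadratic form multiplying $\Psi'_{i,K}$. Controlling the nonlocal convolution term, which is not pointwise localized, requires using the exponential decay of the kernel $p=\tfrac{1}{2}e^{-|x|}$ together with the bound $\|u\|_{L^\infty}\lesssim \sqrt{c_n}$ coming from the proximity to the peakon train, so that the convolution inherits the exponential smallness away from the crests. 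Once this sign structure is pinned down, the rest is routine estimation using the decay of the peakon profiles and the smallness of $v$.
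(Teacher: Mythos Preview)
Your overall strategy is correct and matches the paper's: differentiate $\mathcal{I}_{i,K}$, use the equation to produce flux terms against $\Psi'_{i,K}$, exploit the lower bound on $\dot{y}_i$ to get a good-signed transport term, and show the remaining contributions are either absorbed by that term or exponentially small with a time-growing exponent so that the time integral converges. However, two points deserve comment.

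First, your decomposition differs from the paper's. You propose splitting the \emph{function} as $u=\sum_j\varphi_{c_j}(\cdot-\widetilde{x}_j)+v$ and handling soliton and remainder pieces separately. The paper instead splits \emph{space} into the interior region $D_i(t):=[\widetilde{x}_{i-1}(t)+\tfrac{L}{4},\,\widetilde{x}_i(t)-\tfrac{L}{4}]$ and its complement. On $D_i$ one has directly $\|u(t)\|_{L^\infty(D_i)}\leq Ce^{-L/8}+O(\sqrt{\alpha})$, so the quartic flux terms like $\int_{D_i} u^2u_x^2\Psi'_{i,K}$ are bounded by a small multiple of $\int(u^2+u_x^2)\Psi'_{i,K}$ and absorbed by the transport term. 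On $D_i^c$ one uses the key geometric fact that $|x-y_i(t)|\geq \tfrac{L}{8}+\tfrac{1}{4}(c_i-c_{i-1})t$ (from \eqref{mod_parameter_bound}), so $\Psi'_{i,K}\lesssim K^{-1}e^{-\frac{1}{K}(\sigma_0 t+L/8)}$, which after time integration gives the $O(e^{-L/8K})$ bound. Your function-splitting would eventually arrive at the same place but generates many cross terms in the cubic and quartic expressions; the space-splitting is cleaner and is what the paper does.

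Second, your treatment of the nonlocal term $\int\{p*(3uu_x^2+2u^3)\}u\,\Psi'_{i,K}$ is too vague, and this is the one place where a genuine technical idea is missing. Saying you will use the exponential decay of $p$ together with the global bound $\|u\|_{L^\infty}\lesssim\sqrt{c_n}$ is not enough: bounding $p*(uu_x^2)\leq \|u\|_{L^\infty}\,p*u_x^2$ with $\|u\|_{L^\infty}\sim\sqrt{c_n}$ gives a coefficient that need not be dominated by $\dot{y}_i\sim\tfrac{1}{2}(c_{i-1}+c_i)$. The paper's device is to exploit the explicit inequality
\[
(1-\partial_x^2)\Psi'_{i,K}\geq\Big(1-\tfrac{10}{K^2}\Big)\Psi'_{i,K}\qquad(K>4),
\]
hence $p*\Psi'_{i,K}=(1-\partial_x^2)^{-1}\Psi'_{i,K}\leq\big(1-\tfrac{10}{K^2}\big)^{-1}\Psi'_{i,K}$. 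This lets you move the convolution onto the weight, i.e.
\[
\int_{D_i}\{p*(3uu_x^2+2u^3)\}u\,\Psi'_{i,K}\lesssim \|u\|_{L^\infty(D_i)}^2\int(u^2+u_x^2)\,p*\Psi'_{i,K}\lesssim \|u\|_{L^\infty(D_i)}^2\int(u^2+u_x^2)\Psi'_{i,K},
\]
and then the same $D_i/D_i^c$ splitting as for the local terms closes the estimate. This is precisely why the hypothesis $K\geq 4$ appears. Once you incorporate this ingredient, your argument goes through.
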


\begin{proof}
See the appendix, Section \ref{AM_orb_train_appendix}.
\end{proof}

\subsection{General estimates}
In this subsection we shall prove some general formulas and estimates holding for any function belonging to $H^1(\R)\cap W^{1,4}(\R)$. All of these formulas and estimates are just the localized versions of the ones in Section \ref{general_estimates_peakon}. In this regard we shall need the following definitions. Let us consider the family of functions $\Phi_i(t,x)$ given by \[
\Phi_1(t,x):=1-\Psi_{2,K}(t,x), \quad \Phi_n(t,x):=\Psi_{n,K}(t,x) \ \, \hbox{ and } \ \,  \Phi_i(t,x):=\big(\Psi_{i,K}-\Psi_{i+1,K}\big)(t,x).
\]
It is important to point out that this family of functions satisfies $\sum_{i=1}^n\Phi_{i,K}\equiv1$. On the other hand, notice that for $L,K>0$ large enough and every $i\neq j$ we have \begin{align*}
\vert 1-\Phi_{i,K}\vert\leq 4e^{-L/4K} \ \hbox{ and } \ \vert \Phi_{j,K}\vert \leq 4e^{-L/4K} \ \, \hbox{ on } \ \, \left[\widetilde{x}_i-\tfrac{L}{4},\widetilde{x}_i+\tfrac{L}{4}\right]
\end{align*}
In the sequel we shall need localized versions of the conservation laws in \eqref{cons_e}. In this regard, from now on we denote by $E_i$ and $F_i$ the localized functionals given by
\begin{align*}
E_i(u)&:=\int \big(u^2+u_x^2\big)(t,x)\Phi_i(t,x)dx,
\\ F_i(u)&:=\int \left(u^4+2u^2u_x^2-\dfrac{1}{3}u_x^4\right)(t,x)\Phi_i(t,x)dx.
\end{align*}

The next lemma give us a global identity which shall be crucial in the sequel.
\begin{lem}\label{lemm_four_three}
For any vector $\vec{z}\in\R^n$ satisfying $z_i-z_{i-1}>\tfrac{1}{2}L$ and any function $v\in H^1(\R)$ we have
\begin{align}\label{formula_train_energy}
E(v)-\sum_{i=1}^nE(\varphi_{c_i})=\Vert v-R_{\vec{z}}\Vert_{H^1}^2+4\sum_{i=1}^n\sqrt{c_i}\big(v(z_i)-\sqrt{c_i}\big)+O\left(e^{-L/4}\right).
\end{align}
\end{lem}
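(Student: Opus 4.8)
The plan is to mimic the proof of Lemma \ref{lemma_three_one} termwise, expanding the squared $H^1$-distance by polarization and separating the diagonal self-energy terms from the peakon--peakon interaction terms, the latter being responsible for the $O(e^{-L/4})$ error. Concretely, I would first write
\[
\Vert v-R_{\vec{z}}\Vert_{H^1}^2=E(v)+E(R_{\vec{z}})-2\sum_{i=1}^n\int\big(v\,\varphi_{c_i}(\cdot-z_i)+v_x\,\varphi_{c_i}'(\cdot-z_i)\big)\,dx,
\]
and handle each of the three pieces separately.

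For the mixed term between $v$ and each single peakon, I would integrate by parts exactly as in Lemma \ref{lemma_three_one}, using the distributional identity $\varphi_{c_i}''(\cdot-z_i)=\varphi_{c_i}(\cdot-z_i)-2\sqrt{c_i}\,\delta(\cdot-z_i)$. This collapses the $i$-th summand to $2\sqrt{c_i}\,v(z_i)$ with no remainder, so the full mixed term equals $4\sum_{i=1}^n\sqrt{c_i}\,v(z_i)$. Next I would treat $E(R_{\vec{z}})=\Vert R_{\vec{z}}\Vert_{H^1}^2$ by expanding it as a double sum $\sum_{i,j}$ of peakon--peakon $H^1$-inner products. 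The diagonal contributes precisely $\sum_{i=1}^n E(\varphi_{c_i})$, while the same integration by parts gives, for $i\neq j$, the explicit value $2\sqrt{c_ic_j}\,e^{-\vert z_i-z_j\vert}$ for the cross inner product.

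The only genuinely nontrivial point is the control of these interaction terms, and here the separation hypothesis enters: since $z_i-z_{i-1}>\tfrac12 L$ forces $\vert z_i-z_j\vert\geq \tfrac12 L$ for all $i\neq j$, each cross term is bounded by $2\sqrt{c_ic_j}\,e^{-L/2}$, and summing the finitely many (at most $n(n-1)$) such terms yields a total interaction contribution of size $O(e^{-L/2})=O(e^{-L/4})$. I do not expect any obstacle beyond this bookkeeping, since everything else is the single-peakon computation applied to each summand.

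Finally, I would substitute the three evaluations back, solve for $E(v)-\sum_{i=1}^n E(\varphi_{c_i})$, and use $E(\varphi_{c_i})=2c_i$ (recalled in \eqref{recall_E_F}) to combine
\[
-2\sum_{i=1}^n E(\varphi_{c_i})+4\sum_{i=1}^n\sqrt{c_i}\,v(z_i)=4\sum_{i=1}^n\sqrt{c_i}\big(v(z_i)-\sqrt{c_i}\big),
\]
which, together with the $O(e^{-L/4})$ error from the interaction terms, produces exactly the stated identity \eqref{formula_train_energy}.
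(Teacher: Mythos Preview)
Your proposal is correct and follows essentially the same route as the paper: expand $\Vert v-R_{\vec z}\Vert_{H^1}^2$ by polarization, use the identity $\varphi_{c_i}-\varphi_{c_i}''=2\sqrt{c_i}\,\delta$ (the paper phrases this as integration by parts after writing $\varphi_{c_i}'=-\sgn(\cdot)\varphi_{c_i}$) to reduce the mixed term to $4\sum_i\sqrt{c_i}\,v(z_i)$, and estimate the off-diagonal part of $E(R_{\vec z})$ by the separation hypothesis. Your explicit evaluation of the cross inner products as $2\sqrt{c_ic_j}\,e^{-\vert z_i-z_j\vert}$ even yields the sharper bound $O(e^{-L/2})$, which of course is $O(e^{-L/4})$.
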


\begin{proof}
The proof follows from direct computations. In fact, recalling that $\varphi_{c_i}'(x)=-\sgn(x)\varphi_{c_i}(x)$ and by integrating by parts we obtain
\begin{align*}
E\big(v-R_{\vec{z}}\big)&=E(v)+E(R_{\vec{z}})-2\sum_{i=1}^n\int v(x)\varphi_{c_i}(\cdot-z_i)+v_x(x)\varphi'_{c_i}(\cdot-z_i)dx
\\&=E(v)+E(R_{\vec{z}})-2\sum_{i=1}^n\int v(x)\varphi_{c_i}(\cdot-z_i)dx
\\ & \quad -2\sum_{i=1}^n\int_{-\infty}^{z_i} v_x(x)\varphi_{c_i}(\cdot-z_i)dx+2\sum_{i=1}^n\int_{z_i}^{+\infty}v_x\varphi_{c_i}(\cdot-z_i)dx
\\ & = E(v)+E(R_{\vec{z}})-4\sum_{i=1}^n\sqrt{c_i}v(z_i)
\end{align*}
On the other hand, notice that since $z_i-z_{i-1}\geq \tfrac{1}{2}L$ we have \[
E(R_{\vec{z}})=\sum_{i=1}^n E(\varphi_{c_i})+O\left(e^{-L/4}\right)=2\sum_{i=1}^nc_i+O\left(e^{-L/4}\right).
\]
Gathering the last two formulas we obtain the desired result. Notice that the implicit constant involved in $O\left(e^{-L/4}\right)$ only depends on $c_1,...,c_n$. The proof is complete.
\end{proof}

\textbf{Important:} From now on we fix $K=\tfrac{1}{8}L^{1/2}$. 

\medskip

The following lemma is the localized version of Lemma \ref{lemma_three_two} and shall be crucial in the sequel.

\begin{lem}\label{lemm_four_four}
Let $u(t,x)$ be the solution of the Novikov equation \eqref{nov_eq_2} associated to $u_0\in H^1(\R)\cap W^{1,4}(\R)$, satisfying the hypothesis of Lemma \ref{mod_lemma} on $[0,t^*]$ with $\alpha$ given by \eqref{neigh_assumption}. Then, for all $t\in[0,t^*]$ the following inequality holds: \begin{align}\label{F_E_exp}
F_i(u)\leq \dfrac{4}{3}M_i^2E_i(u)-\dfrac{4}{3}M_i^4+\Vert u_0\Vert_{H^1}^4O\left(L^{-1/2}\right),
\end{align}
where $M_i$ denotes the local maximum $M_i:=\max\{u(t,x):\ x\in\mathcal{J}_i(t)\}$.
\end{lem}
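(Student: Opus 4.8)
The plan is to localize the argument of Lemma \ref{lemma_three_two} by inserting the weight $\Phi_i$ and centering all auxiliary functions at the local maximum point $x_i(t)\in\mathcal{J}_i(t)$ furnished by Lemma \ref{mod_lemma}, where $u(t,x_i(t))=M_i$. Concretely, I would introduce the localized analogues of $g$ and $h$,
\[
g_i(x):=\begin{cases} u-u_x, & x<x_i, \\ u+u_x, & x>x_i,\end{cases}
\qquad
h_i(x):=\begin{cases} u^2-\tfrac{2}{3}uu_x-\tfrac{1}{3}u_x^2, & x<x_i, \\ u^2+\tfrac{2}{3}uu_x-\tfrac{1}{3}u_x^2, & x>x_i,\end{cases}
\]
and estimate the weighted integral $\int h_ig_i^2\,\Phi_i\,dx$ from both sides exactly as in the single-peakon case, while tracking the errors generated by $\Phi_i$. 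Throughout I would use that $E(\cdot)$ is conserved, so $\Vert u(t)\Vert_{H^1}=\Vert u_0\Vert_{H^1}$, and that $K=\tfrac18L^{1/2}$.

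For the identity side, expanding $h_ig_i^2$ pointwise reproduces the density of $F_i$ plus the term $\mp\tfrac{8}{3}u^3u_x$ on each half-line. Writing $u^3u_x=\tfrac14(u^4)_x$ and integrating by parts against $\Phi_i$, the boundary contributions at $x_i$ collapse to $-\tfrac{4}{3}M_i^4\Phi_i(x_i)$, and the bulk contributions are $\tfrac{2}{3}\big(\int_{-\infty}^{x_i}-\int_{x_i}^{\infty}\big)u^4\Phi_{i,x}$. Since $\vert x_i-\widetilde{x}_i\vert\le\tfrac{1}{12}L$, the exponential bounds for $\Phi_i$ near $\widetilde{x}_i$ recalled above give $\Phi_i(x_i)=1+O(e^{-cL^{1/2}})$, whence $-\tfrac43M_i^4\Phi_i(x_i)=-\tfrac43M_i^4+O(e^{-cL^{1/2}})\Vert u_0\Vert_{H^1}^4$ after using $M_i\le\Vert u\Vert_{L^\infty}\lesssim\Vert u_0\Vert_{H^1}$. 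The weight-derivative term is controlled by $\Vert\Phi_{i,x}\Vert_{L^\infty}\lesssim K^{-1}=O(L^{-1/2})$ together with $\int u^4\le\Vert u\Vert_{L^4}^4\le\Vert u_0\Vert_{H^1}^4$, yielding $\int h_ig_i^2\Phi_i=F_i(u)-\tfrac{4}{3}M_i^4+\Vert u_0\Vert_{H^1}^4O(L^{-1/2})$.

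For the inequality side, the pointwise bound $u^2\pm\tfrac23uu_x-\tfrac13u_x^2\le\tfrac43u^2$ gives $h_i\le\tfrac43u^2$, and since $\Phi_i\ge0$ this yields $\int h_ig_i^2\Phi_i\le\tfrac43\int u^2g_i^2\Phi_i$. The same integration by parts applied to $\int g_i^2\Phi_i$ produces $E_i(u)-2M_i^2+\Vert u_0\Vert_{H^1}^2O(L^{-1/2})$, so it remains to replace $u^2$ by $M_i^2$ inside $\int u^2g_i^2\Phi_i$. I would split this as $M_i^2\int g_i^2\Phi_i+\int(u^2-M_i^2)g_i^2\Phi_i$ and bound the last integral by partitioning $\R$ into $A:=[y_i-\tfrac{L}{8},y_{i+1}+\tfrac{L}{8}]$ and its complement: on $A^c$ the weight is exponentially small, $\Phi_i\lesssim e^{-L^{1/2}}$, while $\int u^2g_i^2\lesssim\Vert u\Vert_{L^\infty}^2E(u)\lesssim\Vert u_0\Vert_{H^1}^4$; on $A$ the modulation estimate \eqref{mod_bound} forces $u\le M_i$ (the neighbouring bumps sit at distance $\gtrsim L\gg K$ by \eqref{mod_parameter_bound}, so there $u$ is $O(\alpha^{1/2})$ plus exponentially small, whereas $M_i\ge\tfrac12\sqrt{c_i}$), making the integrand nonpositive. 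Hence $\int(u^2-M_i^2)g_i^2\Phi_i\le O(L^{-1/2})\Vert u_0\Vert_{H^1}^4$.

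Combining the two sides and using the cancellation $-\tfrac83M_i^4+\tfrac43M_i^4=-\tfrac43M_i^4$ delivers \eqref{F_E_exp}. I expect the only delicate point to be the inequality side, namely justifying that the set $\{x:u(t,x)>M_i\}$ never meets the effective support of $\Phi_i$. This is precisely where Lemma \ref{mod_lemma} is indispensable: the $H^1$-smallness \eqref{mod_bound} controls $u$ in the valleys between consecutive bumps, the separation $\widetilde{x}_i-\widetilde{x}_{i-1}\ge\tfrac34L$ in \eqref{mod_parameter_bound} keeps the other bumps at distance $\gtrsim L$ from $\mathcal{J}_i$, and the lower bound $M_i\gtrsim\sqrt{c_i}$ closes the comparison. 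All remaining errors are routine and carry the gain $K^{-1}=O(L^{-1/2})$ coming from differentiating the weight.
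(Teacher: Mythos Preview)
Your proposal is correct and follows essentially the same approach as the paper's proof: the same localized auxiliary functions $g_i,h_i$, the same integration-by-parts identity producing $F_i(u)-\tfrac43M_i^4\Phi_i(x_i)$ plus weight-derivative terms, the same pointwise bound $h_i\le\tfrac43u^2$, and the same control of $\Phi_i(x_i)-1$ and $\Phi_{i,x}$ via $K=\tfrac18L^{1/2}$. The only difference is cosmetic: where the paper replaces $u^2$ by $M_i^2$ in one line citing \eqref{neigh_assumption}, you spell out the partition into $A=[y_i-\tfrac{L}{8},y_{i+1}+\tfrac{L}{8}]$ and $A^c$ and invoke \eqref{mod_bound}--\eqref{mod_parameter_bound} explicitly, which is exactly the content of that citation.
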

\begin{proof}
First of all let us start by introducing some notation. For each $i=1,...,n$ we define $g_i$ and $h_i$ the functions given by
\[
g_i(t,x):=\begin{cases}
u-u_x, & x<x_i(t),
\\ u+u_x, & x>x_i(t),
\end{cases} \qquad 
h_i(t,x):=\begin{cases}
u^2-\tfrac{2}{3}uu_x-\tfrac{1}{3}u_x^2, & x<x_i(t),
\\ u^2+\tfrac{2}{3}uu_x-\tfrac{1}{3}u_x^2, & x>x_i(t).
\end{cases}
\]
Then, on the one-hand, by direct computations we have
\begin{align*}
\int h_i(t,x)g_i^2(t,x)\Phi_i(t,x)dx&=\int_{-\infty}^{x_i}\left(u^2-\tfrac{2}{3}uu_x-\tfrac{1}{3}u_x^2\right)(u-u_x)^2\Phi_i(t,x)
\\ & \quad +\int_{x_i}^{+\infty}\left(u^2+\tfrac{2}{3}uu_x-\tfrac{1}{3}u_x^2\right)(u+u_x)^2\Phi_i(t,x)=:\mathrm{I}+\mathrm{II},
\end{align*}
Now, by integration by parts we obtain
\begin{align*}
\mathrm{I}&=\int_{-\infty}^{x_i} \big(u^4+2u^2u_x^2-\tfrac{1}{3}u_x^4-\tfrac{8}{3}u^3u_x\big)\Phi_i(t,x)dx
\\ & =\int_{-\infty}^{x_i} \big(u^4+2u^2u_x^2-\tfrac{1}{3}u_x^4\big)\Phi_i(t,x)dx+\dfrac{2}{3}\int_{-\infty}^{x_i}u^4\Phi_i'(t,x)dx-\dfrac{2}{3}M_i^4\Phi_i(t,x_i).
\end{align*}
Similarly, by integration by parts again we get
\begin{align*}
\mathrm{II}=\int_{x_i}^{+\infty} \big(u^4+2u^2u_x^2-\tfrac{1}{3}u_x^4\big)\Phi_i(t,x)dx-\dfrac{2}{3}\int_{x_i}^{+\infty}u^4\Phi_i'(t,x)dx-\dfrac{2}{3}M_i^4\Phi_i(t,x_i).
\end{align*}
Plugging the last two formulas together we deduce
\begin{align}\label{formula_h_g}
\int h_i(t,x)g_i^2(t,x)\Phi_i(t,x)dx=F_i(u)-\dfrac{4}{3}M_i^4\Phi_i(x_i)+\dfrac{2}{3}\int_{-\infty}^{x_i}u^4\Phi_i'dx-\dfrac{2}{3}\int_{x_i}^{+\infty}u^4\Phi_i'dx.
\end{align}
On the other hand, notice that by using $a^2+b^2\geq 2ab$ we have \[
u(t,x)^2\pm \dfrac{2}{3}u(t,x)u_x(t,x)-\dfrac{1}{3}u_x^2(t,x)\leq \dfrac{4}{3}u^2(t,x)
\]
Thus, by using the latter inequality deduce $h_i(x)\leq \tfrac{4}{3}u^2$ and hence, by using \eqref{neigh_assumption} we get
\begin{align*}
\int h_ig_i^2\Phi_i&\leq \dfrac{4}{3}\int u^2g_i^2\Phi_i \leq  \dfrac{4}{3}M_i^2\int g_i^2\Phi_i+O\left(e^{-L^{1/2}}\right)
\\ & =\dfrac{4}{3}M_i^2E_i(u)-\dfrac{8}{3}M_i^4\Phi_i(x_i)+\dfrac{4}{3}M_i^2\int_{-\infty}^{x_i}u^2\Phi_i'-\dfrac{4}{3}M_i^2\int_{x_i}^{+\infty}u^2\Phi_i'+O\left(e^{-L^{1/2}}\right).
\end{align*}
Now it is important to notice that, since $\vert x_i-z_i\vert\leq\tfrac{1}{20}L$, we deduce that $\Phi_i(x_i)=1+O\big(e^{-L^{1/2}}\big)$. Therefore, gathering the latter inequality with \eqref{formula_h_g} we obtain 
\begin{align}\label{final_est_F_i_lemma}
F_i(u)&\leq \dfrac{4}{3}M_i^2E_i(u)-\dfrac{4}{3}M_i^4+\dfrac{4}{3}M_i^2\int_{-\infty}^{x_i}u^2\Phi_i'-\dfrac{4}{3}M_i^2\int_{x_i}^{+\infty}u^2\Phi_i'\nonumber
\\ & \quad -\dfrac{2}{3}\int _{-\infty}^{x_i}u^4\Phi_i'+\dfrac{2}{3}\int_{x_i}^{+\infty}u^4\Phi_i'+O\big(e^{-L^{1/2}}\big).
\end{align}
Finally, we recall that since $K=\tfrac{1}{8}L^{1/2}$ we have $\vert \Phi_i'\vert\leq \tfrac{C}{K}=O(L^{-1/2})$. Therefore, we conclude the proof of \eqref{F_E_exp} by plugging the latter estimate for $\Phi_i'$ on $\R$ into \eqref{final_est_F_i_lemma}. 
\end{proof}
As a consequence of the previous lemmas we obtain the following corollary.
\begin{lem}
Under the hypothesis of Lemma \ref{mod_lemma} and by considering $x_1(t),...,x_n(t)$ the functions constructed in such lemma, the following holds: For all $t\in[0,t^*]$ we have \begin{align}\label{lemma_tubular}
\left\Vert u(t)-\sum_{i=1}^n\varphi_{c_i}(\cdot-x_i(t))\right\Vert_{H^1}\leq O(\sqrt{\alpha})+\left(e^{-L/8}\right).
\end{align}
\end{lem}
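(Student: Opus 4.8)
The plan is to prove \eqref{lemma_tubular} by localizing, bump by bump, the Constantin--Strauss computation behind Theorem \ref{MT4}; concretely, I run the chain Lemma \ref{lemma_three_one}$\,\to\,$Lemma \ref{lemma_three_two}$\,\to\,$Lemma \ref{lemma_three_four} in the localized form furnished by Lemmas \ref{lemm_four_three} and \ref{lemm_four_four}. First I apply the global energy identity \eqref{formula_train_energy} of Lemma \ref{lemm_four_three} with the centers chosen to be the local maxima $z_i=x_i(t)$ produced by Lemma \ref{mod_lemma}. Its separation hypothesis $x_i-x_{i-1}>\tfrac12 L$ is met because $|x_i-\widetilde{x}_i|\le\tfrac{1}{12}L$ together with $\widetilde{x}_i-\widetilde{x}_{i-1}\ge\tfrac34 L$ from \eqref{mod_parameter_bound} gives $x_i-x_{i-1}\ge\tfrac{7}{12}L$. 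Using $u(t,x_i(t))=M_i$, the identity becomes
\[
\Big\Vert u(t)-\sum_{i=1}^n\varphi_{c_i}(\cdot-x_i(t))\Big\Vert_{H^1}^2=\Big(E(u)-\sum_{i=1}^nE(\varphi_{c_i})\Big)-4\sum_{i=1}^n\sqrt{c_i}\big(M_i-\sqrt{c_i}\big)+O\!\left(e^{-L/4}\right).
\]

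Next I dispose of the global energy term. Since $E(\cdot)$ is conserved, $E(u)=E(u_0)$; combining the initial bound \eqref{initial_cond_hyp_train} with the expansion $E(R_{\vec z})=\sum_i E(\varphi_{c_i})+O(e^{-L/4})$ established inside the proof of Lemma \ref{lemm_four_three}, the reverse triangle inequality yields $\big|E(u)-\sum_iE(\varphi_{c_i})\big|\lesssim\varepsilon^4+e^{-L/4}$. Hence everything reduces to estimating each defect $M_i-\sqrt{c_i}$, which is the localized analogue of Lemma \ref{lemma_three_four}.

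For this main step I reproduce the polynomial argument locally. Lemma \ref{lemm_four_four} provides $M_i^4-E_i(u)M_i^2+\tfrac34F_i(u)\le\Vert u_0\Vert_{H^1}^4\,O(L^{-1/2})$; setting $\widetilde P_i(q):=q^4-E(\varphi_{c_i})q^2+\tfrac34F(\varphi_{c_i})=(q-\sqrt{c_i})^2(q+\sqrt{c_i})^2$, where the factorization uses \eqref{recall_E_F}, and subtracting, I obtain, since $0\le M_i\le 2\sqrt{c_i}$ by \eqref{mod_bound},
\[
c_i\big(M_i-\sqrt{c_i}\big)^2\le\widetilde P_i(M_i)\le\big(E_i(u)-E(\varphi_{c_i})\big)M_i^2-\tfrac34\big(F_i(u)-F(\varphi_{c_i})\big)+O\!\left(L^{-1/2}\right).
\]
The crux, and what I expect to be the genuine obstacle, is to control the two localized defects $E_i(u)-E(\varphi_{c_i})$ and $F_i(u)-F(\varphi_{c_i})$: unlike the conserved $E,F$ of the single-peakon case, these windowed functionals are not conserved, so I cannot transfer smallness from the initial data directly. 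I would bound them by combining three ingredients: the modulation estimate \eqref{mod_bound}, which controls $u$ by the peakon sum in $H^1$; the exponential separation of the bumps, which makes $\Phi_i\equiv1+O(e^{-L^{1/2}})$ on the support of the $i$-th peakon and $O(e^{-L^{1/2}})$ elsewhere so that all cross-interactions are negligible; and the almost-monotonicity \eqref{AM_energy_orbital} of the tail energies $\mathcal{I}_{i,K}$, which sharpens the one-sided control of $E_i(u)$ to order $\varepsilon^4+e^{-L/8K}$. Carrying out this bookkeeping gives $|E_i(u)-E(\varphi_{c_i})|+|F_i(u)-F(\varphi_{c_i})|\lesssim\alpha+e^{-L^{1/2}}$, whence $|M_i-\sqrt{c_i}|\lesssim\alpha^{1/2}$; inserting this together with the energy bound of the previous paragraph into the displayed identity and taking square roots produces an estimate of the announced type \eqref{lemma_tubular}. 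The genuinely delicate point is tracking the powers of $\alpha$ and the exponential interaction errors through the localized functionals so that they collapse into the stated $O(\sqrt{\alpha})+O(e^{-L/8})$, since the windowing weight $\Phi_i$ repeatedly trades localization against the derivative loss $|\Phi_i'|=O(L^{-1/2})$ already seen in Lemma \ref{lemm_four_four}.
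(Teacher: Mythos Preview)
Your approach is far more elaborate than needed and, as written, cannot reach the stated bound. The paper's proof is a two-line observation that you have overlooked: apply the identity \eqref{formula_train_energy} of Lemma \ref{lemm_four_three} \emph{twice}, once with centers $z_i=x_i(t)$ and once with centers $z_i=\widetilde{x}_i(t)$, and subtract. The global energy term $E(v)-\sum_iE(\varphi_{c_i})$ cancels, leaving
\[
\Big\Vert u(t)-\sum_i\varphi_{c_i}(\cdot-x_i(t))\Big\Vert_{H^1}^2=\Big\Vert u(t)-\sum_i\varphi_{c_i}(\cdot-\widetilde{x}_i(t))\Big\Vert_{H^1}^2-4\sum_i\sqrt{c_i}\big(u(t,x_i(t))-u(t,\widetilde{x}_i(t))\big)+O\!\left(e^{-L/4}\right).
\]
Since $x_i(t)$ is the maximizer of $u(t,\cdot)$ on $\mathcal{J}_i(t)$ and $\widetilde{x}_i(t)\in\mathcal{J}_i(t)$, the middle sum is nonnegative; together with \eqref{mod_bound} this gives $O(\alpha)+O(e^{-L/4})$ for the square, hence the claim. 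In short, this lemma is only about \emph{transferring} the $O(\sqrt{\alpha})$ bound from the modulation centers $\widetilde{x}_i$ to the local maxima $x_i$, and that transfer is free because maxima minimize the $H^1$-distance in \eqref{formula_train_energy}.

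Your route through the localized functionals $E_i,F_i$ and the polynomial factorization is essentially the machinery of Lemma \ref{lem_four_six}, which the paper uses later for the sharper estimate on $\sum_i\sqrt{c_i}|M_i-\sqrt{c_i}|$. For the present lemma it is overkill and also lossy: the inequality from Lemma \ref{lemm_four_four} carries an unavoidable $O(L^{-1/2})$ remainder, so even granting your claimed $|E_i(u)-E(\varphi_{c_i})|+|F_i(u)-F(\varphi_{c_i})|\lesssim\alpha$ you would obtain $(M_i-\sqrt{c_i})^2\lesssim\alpha+L^{-1/2}$, hence after one more square root only $\Vert\cdot\Vert_{H^1}\lesssim\alpha^{1/4}+L^{-1/8}$, strictly weaker than the asserted $O(\sqrt{\alpha})+O(e^{-L/8})$. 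There is also a genuine gap in your control of $F_i(u(t))-F(\varphi_{c_i})$ at a generic time $t$: the modulation bound \eqref{mod_bound} is only in $H^1$, which does not dominate the $u_x^4$ term in $F_i$, and almost-monotonicity \eqref{AM_energy_orbital} concerns $E_i$-type quantities, not $F_i$. In the paper these issues are circumvented in Lemma \ref{lem_four_six} by evaluating $F_i$ only at $t=0$ (where \eqref{initial_cond_hyp_train} supplies $W^{1,4}$ smallness) and by Abel-summing to exploit the one-sided monotonicity; none of that is needed here.
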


\begin{proof}
In fact, recalling that by hypothesis we have $u(t)\in\mathcal{U}(\alpha,\tfrac{1}{2}L)$ for all $t\in[0,t^*]$, on account of Lemma \ref{mod_lemma} there exists $\widetilde{x}_1(t),...,\widetilde{x}_n(t)$ such that $\widetilde{x}_i(t)\in\mathcal{J}_i(t)$ and \[
\left\Vert u(t)-\sum_{i=1}^n\varphi_{c_i}(\cdot-\widetilde{x}_i(t))\right\Vert_{H^1}=O(\sqrt{\alpha}).
\]
Finally, recalling that $u\big(t,x_i(t)\big)=\max_{\mathcal{J}_i(t)}u(t,\cdot)$, by applying Lemma \ref{lemm_four_three} we conclude \begin{align*}
\left\Vert u(t)-\sum_{i=1}^n\varphi_{c_i}(\cdot-x_i(t))\right\Vert_{H^1}^2&=\left\Vert u(t)-\sum_{i=1}^n\varphi_{c_i}(\cdot-\widetilde{x}_i(t))\right\Vert_{H^1}^2
\\ & \quad -4\sum_{i=1}^n\sqrt{c_i}\big(u(t,x_i(t))-u(t,\widetilde{x}_i(t))\big)+O\left(e^{-L/4}\right)
\\ & \leq O(\alpha)+O\left(e^{-L/4}\right),
\end{align*}
which lead us to the desired result.
\end{proof}

Finally, the next lemma gives us a more accurate estimate of the last non-negligible term in formula \eqref{formula_train_energy} once we choose $\{z_i\}_{i=1}^n$ to be the natural candidate to study the orbital stability of a train of peakons, that is, the family of local maxima $\{x_i\}_{i=1}^n$ given by Lemma \ref{mod_lemma}.
\begin{lem}\label{lem_four_six}
Let $u(t,x)$ be the solution of the Novikov equation \eqref{nov_eq_2} associated to $u_0\in H^1(\R)\cap W^{1,4}(\R)$, satisfying the hypothesis of Lemma \ref{mod_lemma} on $[0,t^*]$ with $\alpha$ given by \eqref{neigh_assumption}. Then, for all $t\in[0,t^*]$ we have \begin{align}
\sum_{i=1}^n\sqrt{c_i}\left\vert M_i-\sqrt{c_i}\right\vert\leq O(\varepsilon^2)+O\left(L^{-1/4}\right).
\end{align}
\end{lem}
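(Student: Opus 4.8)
The plan is to mimic the single-peakon argument of Lemma~\ref{lemma_three_four}, but now slot by slot, with the losses created by the cut-off weights $\Phi_i$ absorbed by the conservation laws and the almost-monotonicity of Lemma~\ref{AM_orb_train}. For each $i$ I would introduce the local polynomials
\[
P_i(q):=q^4-E_i(u)q^2+\tfrac34F_i(u)\qquad\text{and}\qquad \widetilde P_i(q):=q^4-E(\varphi_{c_i})q^2+\tfrac34F(\varphi_{c_i}),
\]
and recall from \eqref{recall_E_F} that $\widetilde P_i(q)=(q^2-c_i)^2=(q-\sqrt{c_i})^2(q+\sqrt{c_i})^2$. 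Lemma~\ref{lemm_four_four} rewrites exactly as $P_i(M_i)\le \|u_0\|_{H^1}^4\,O(L^{-1/2})=O(L^{-1/2})$, the last bound because the smallness hypothesis \eqref{initial_cond_hyp_train} forces $\|u_0\|_{H^1}\lesssim\sqrt{c_n}$. Writing $\widetilde P_i(M_i)=P_i(M_i)+\big(E_i(u)-E(\varphi_{c_i})\big)M_i^2-\tfrac34\big(F_i(u)-F(\varphi_{c_i})\big)$ and using $M_i\ge0$, so that $(M_i+\sqrt{c_i})^2\ge c_i$, I obtain for each $i$
\[
c_i\,(M_i-\sqrt{c_i})^2\le \widetilde P_i(M_i)\le O(L^{-1/2})+\big(E_i(u)-E(\varphi_{c_i})\big)M_i^2-\tfrac34\big(F_i(u)-F(\varphi_{c_i})\big).
\]

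Next I would sum over $i=1,\dots,n$ and exploit that $\sum_{i}\Phi_i\equiv1$, so that $\sum_iE_i(u)=E(u)=E(u_0)$ and $\sum_iF_i(u)=F(u)=F(u_0)$ are conserved. The $F$-contribution then collapses to the \emph{global} quantity $\sum_i\big(F_i(u)-F(\varphi_{c_i})\big)=F(u_0)-\sum_iF(\varphi_{c_i})$, which a train version of Lemma~\ref{E_F_differences} applied to the initial datum (together with the exponential decoupling of the $L$-separated peakons) bounds by $O(\varepsilon^4)+O(e^{-L/4})$. The genuinely delicate term is the energy contribution $\sum_i\big(E_i(u)-E(\varphi_{c_i})\big)M_i^2$: because of the weights $M_i^2$ it cannot be replaced by the conserved total $\sum_i\big(E_i(u)-E(\varphi_{c_i})\big)$, and the only pointwise information at hand, namely $|E_i(u)-E(\varphi_{c_i})|\lesssim\sqrt\alpha$ from the modulation bound \eqref{mod_bound}, is far too crude to reach the target order $\varepsilon^2+L^{-1/4}$.

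This is the step I expect to be the main obstacle, and the key idea to bypass it is a summation by parts keyed to the tail sums $\mathcal I_{i,K}(t)=\sum_{j\ge i}E_j(u(t))$. Setting $B_i:=\mathcal I_{i,K}(t)-\sum_{j\ge i}E(\varphi_{c_j})$, one has $E_i(u)-E(\varphi_{c_i})=B_i-B_{i+1}$ (with $B_{n+1}=0$), whence Abel summation gives
\[
\sum_{i=1}^n\big(E_i(u)-E(\varphi_{c_i})\big)M_i^2=B_1M_1^2+\sum_{i=2}^nB_i\big(M_i^2-M_{i-1}^2\big).
\]
The crucial point is that Lemma~\ref{AM_orb_train}, combined with the fact that at $t=0$ the weight $\Psi_{i,K}$ localizes precisely on the well-separated peakons indexed by $\{j\ge i\}$, yields the \emph{one-sided} bound $B_i\le O(\varepsilon^4)+O(e^{-L^{1/2}})$ for every $i\ge2$, while $|B_1|\le O(\varepsilon^4)+O(e^{-L/4})$ follows from conservation of $E$ and the train version of Lemma~\ref{E_F_differences}; only upper bounds on the $B_i$ are needed. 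Since the modulation forces $M_i$ close to $\sqrt{c_i}$ and $c_1<\dots<c_n$, the increments satisfy $M_i^2-M_{i-1}^2>0$, so each term $B_i\big(M_i^2-M_{i-1}^2\big)$ is bounded above by $\big(O(\varepsilon^4)+O(e^{-L^{1/2}})\big)\big(M_i^2-M_{i-1}^2\big)$, and the telescoping sum is controlled by $(M_n^2-M_1^2)\lesssim c_n$. This yields $\sum_i\big(E_i(u)-E(\varphi_{c_i})\big)M_i^2\le O(\varepsilon^4)+O(L^{-1/2})$.

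Combining the three estimates gives $\sum_{i=1}^n c_i(M_i-\sqrt{c_i})^2\le O(\varepsilon^4)+O(L^{-1/2})$. I would then conclude by Cauchy--Schwarz, writing $\sqrt{c_i}\,|M_i-\sqrt{c_i}|=\sqrt{c_i(M_i-\sqrt{c_i})^2}$, so that
\[
\sum_{i=1}^n\sqrt{c_i}\,|M_i-\sqrt{c_i}|\le\sqrt n\Big(\sum_{i=1}^nc_i(M_i-\sqrt{c_i})^2\Big)^{1/2}\le O(\varepsilon^2)+O(L^{-1/4}),
\]
which is exactly the claim, the factor $\sqrt n$ being absorbed into the implicit constants since $n$ is fixed.
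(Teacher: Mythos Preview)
Your proposal is correct and follows essentially the same approach as the paper: the polynomial identity $\widetilde P_i(M_i)=P_i(M_i)+(E_i(u)-E(\varphi_{c_i}))M_i^2-\tfrac34(F_i(u)-F(\varphi_{c_i}))$, summation over $i$, and the crucial Abel summation combined with the one-sided almost-monotonicity bound and the ordering $M_1<\dots<M_n$ are exactly the ingredients the paper uses. The only organizational difference is that the paper first splits the energy contribution as $\sum_iM_i^2(E_i(u(t))-E_i(u_0))+\sum_iM_i^2(E_i(u_0)-E(\varphi_{c_i}))$ and applies Abel's transformation to the first sum alone (the second being controlled directly from the initial smallness), whereas you absorb both pieces into a single quantity $B_i=\mathcal I_{i,K}(t)-\sum_{j\ge i}E(\varphi_{c_j})$ before summing by parts; similarly, for the $F$-contribution the paper bounds each $|F_i(u_0)-F(\varphi_{c_i})|$ slot-by-slot while you collapse the sum globally via $\sum_iF_i(u)=F(u_0)$. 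Both routes are equivalent and yield the same final estimate.
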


\begin{proof}
In fact, first of all we recall that for every $i=1,...,n$ the associated peakon profile satisfies \[
E(\varphi_{c_i})=2c_i \quad \hbox{and}\quad F(\varphi_{c_i})=\tfrac{4}{3}c_i^2.
\]
%First of all, notice that by using \eqref{initial_cond_hyp} and the reverse triangular inequality we obtain
%\begin{align*}
%\left\vert E(u_0)-E\left(R_{\vec{z}_0}\right)\right\vert&=\left(\Vert u_0\Vert_{H^1}+\Vert R_{\vec{z}_0}\right\Vert _{H^1})\Big\vert \Vert u_0\Vert_{H^1}-\Vert R_{\vec{z}}\Vert_{H^1}\Big\vert
%\\ & \leq \left(\Vert u_0\Vert_{H^1}+\Vert R_{\vec{z}_0}\right\Vert _{H^1}) \Vert u_0- R_{\vec{z}}\Vert_{H^1} =O(\varepsilon^2).
%\end{align*}
%Now, notice that by Lemma \ref{mod_lemma} we have that $u(t)$ satisfies the hypothesis of Lemma \ref{lemm_four_four} for all $t\in[0,t^*]$, with the family of intervals $\mathcal{J}_i$ given at the end of Lemma \ref{mod_lemma}. 
Hence, due to the fact that $M_i$ is positive and by using Lemma \ref{lemm_four_four}, we have
\begin{align*}
c_i\big(M_i-\sqrt{c_i}\big)^2&\leq \big(M_i+\sqrt{c_i}\big)^2\big(M_i-\sqrt{c_i}\big)^2=M_i^4-M_i^2E(\varphi_{c_i})+\dfrac{3}{4}F(\varphi_{c_i})
\\ & \leq \Big(M_i^2E_i(u(t))-M_i^2E(\varphi_{c_i})\Big)-\dfrac{3}{4}\Big(F_i(u(t))-F(\varphi_{c_i})\Big)+O\big(L^{-1/2}\big).
\end{align*}
Therefore, by adding-up all the inequalities for $i=1,...,n$ and rearranging terms we obtain
\begin{align*}
\sum_{i=1}^nc_i(M_i-\sqrt{c_i})^2&\leq \sum_{i=1}^nM_i^2\big(E_i(u(t))-E_i(u_0)\big)-\sum_{i=1}^nM_i^2\big(E(\varphi_{c_i})-E_i(u_0)\big)
\\ & \quad -\dfrac{3}{4}\sum_{i=1}^n\big(F_i(u(t))-F(\varphi_{c_i})\big)+O\big(L^{-1/2}\big)=:\mathrm{I}+\mathrm{II}+\mathrm{III}+O(L^{-1/2}).
\end{align*}
Now for the sake of readability we split the proof into three step, each of which is devoted to bound each sum.

\medskip

\textbf{Step 1:} In this first step we are devoted to bound $\mathrm{I}$. First of all notice that by \eqref{lemma_tubular} and the continuous embedding $H^1(\R)\hookrightarrow L^\infty(\R)$ we immediately conclude for all $t\in[0,t^*]$, \[
M_i(t)=c_i+O(\sqrt{\alpha})+O\left(e^{-L/8}\right) \ \,\hbox{ and hence } \ \, 0<M_1<...<M_n.
\]
On the other hand, by using Abel's transformation, the almost monotonicity Lemma \ref{AM_orb_train} and the above estimate, we conclude
\begin{align*}
\mathrm{I}&=M_n^2(t)\sum_{i=1}^n\big(E_i(u(t))-E_i(u_0)\big)-\sum_{j=1}^{n-1}(M_{j+1}^2(t)-M_j^2(t))\sum_{i=1}^j\big(E_i(u(t))-E_i(u_0)\big)
\\ & = \sum_{i=1}^{n-1}\big(M_{i+1}^2(t)-M_i^2(t)\big)\big(\mathcal{I}_{i+1}(t)-\mathcal{I}_{i+1}(0)\big)\leq O\left(e^{-\sqrt{L}}\right),
\end{align*}
which gives us an admissible estimate.

\medskip

\textbf{Step 2:} Now we intend to bound $\mathrm{II}$. In fact, by using the exponential decay of each $\varphi_{c_i}$ and each $\Phi_i$, due to hypothesis \eqref{initial_cond_hyp_train} and by using the reverse triangular inequality we obtain
\begin{align*}
\sum_{i=1}^n \big(E_i(u_0)-E(\varphi_{c_i})\big)&\leq \sum_{i=1}^n \Big\vert\Vert u_0\Vert_{H^1(\mathcal{J}_i(0))}^2-\Vert\varphi_{c_i}(\cdot-x_i(0))\Vert_{H^1(\mathcal{J}_i(0))}^2\Big\vert+O\left(e^{-\sqrt{L}}\right)
\\ & \leq \sum_{i=1}^n \Big(\Vert u_0\Vert_{H^1(\mathcal{J}_i(0))}+\Vert\varphi_{c_i}(\cdot-x_i(0))\Vert_{H^1(\mathcal{J}_i(0))}\Big)\cdot 
\\ & \qquad \cdot \Vert u_0-\varphi_{c_i}(\cdot-x_i(0))\Vert_{H^1(\mathcal{J}_i(0))}+O\left(e^{-\sqrt{L}}\right)
\\ &\leq O(\varepsilon^4)+O\left(e^{-\sqrt{L}}\right),
\end{align*}
Therefore, recalling that $M_i\leq \Vert u_0\Vert_{L^\infty}\leq \Vert u_0\Vert_{H^1}$ we conclude \begin{align*}
\mathrm{II}=-\sum_{i=1}^nM_i^2\big(E(\varphi_{c_i})-E_i(u_0)\big)\leq O(\varepsilon^4)+O\left(e^{-\sqrt{L}}\right).
\end{align*}

\textbf{Step 3:} Finally, to estimate the last term we start by rearranging terms. In fact, notice that each term in $\mathrm{III}$ can be rewritten as 
\begin{align*}
\big\vert F_i(u_0)-F(\varphi_{c_i})\big\vert&=\Big\vert\int \left(u_0^4+2u_0^2u_{0,x}^2-\dfrac{1}{3}u_{0,x}^4\right)(x)\Phi_i(0,x)dx
\\ & \qquad -\int \left(\varphi_{c_i}^4+2\varphi_{c_i}^2\varphi_{c_i}'^2-\dfrac{1}{3}\varphi_{c_i}'^4\right)(\cdot-x_i(0))dx\Big\vert
\\ & \leq \left\vert\int (u_0^2+u_{0,x}^2)^2\Phi_i(0,x)-(\varphi_{c_i}^2+\varphi_{c_i}'^2)^2(\cdot-x_i(0))dx\right\vert
\\ & \qquad + \dfrac{4}{3}\left\vert\int \big( u_{0,x}^4\Phi_i(0,x)-\varphi_{c_i}'^4(\cdot-x_i(0))\big)dx\right\vert =: \mathrm{III}_1+\mathrm{III}_2.
\end{align*} 
For the sake of readability from now on we shall denote by $\varphi_{c_i}=\varphi_{c_i}(\cdot-x_i(0))$. That being said, notice that by using the exponential decay of both $\varphi_{c_i}$ and $\Phi_i$, H\"older's and triangular inequalities together with Sobolev's embedding $H^1\hookrightarrow L^4$ we obtain
\begin{align*}
\mathrm{III}_1&\leq\left\vert\int_{\mathcal{J}_i(0)}\left(u_0^2+u_{0,x}^2+\varphi_{c_i}^2+\varphi_{c_i}'^2\right)\left(u_0^2+u_{0,x}^2-\varphi_{c_i}^2-\varphi_{c_i}'^2\right)dx\right\vert+O\left(e^{-\sqrt{L}}\right)
\\ & \leq 2\Vert u_0^2+u_{0,x}^2+\varphi_{c_i}^2+\varphi_{c_i}'^2\Vert_{L^2\left(\mathcal{J}_i(0)\right)} \Big(\Vert u_0+\varphi_{c_i}\Vert_{L^4\left(\mathcal{J}_i(0)\right)}\Vert u_0-\varphi_{c_i}\Vert_{L^4\left(\mathcal{J}_i(0)\right)}+
\\ & \quad +\Vert u_{0,x}+\varphi_{c_i}'\Vert_{L^4\left(\mathcal{J}_i(0)\right)} \Vert u_{0,x}-\varphi_{c_i}'\Vert_{L^4\left(\mathcal{J}_i(0)\right)} \Big)+O\left(e^{-\sqrt{L}}\right)
\\ & \leq O\big(\Vert u_0-\varphi_{c_i}\Vert_{H^1\left(\mathcal{J}_i(0)\right)}+\Vert u_{0,x}-\varphi_{c_i}'\Vert_{L^4\left(\mathcal{J}_i(0)\right)}\big)+O\left(e^{-\sqrt{L}}\right)
\\ & \leq O\big(\varepsilon^4\big)+O\left(e^{-\sqrt{L}}\right).
\end{align*}
Similarly, due to the exponential decay of $\varphi_{c_i}$ and $\Phi_i$, by using H\"older's and triangular inequalities we get
\begin{align*}
\mathrm{III}_2&\leq\dfrac{4}{3}\left\vert\int_{\mathcal{J}_i(0)} \big(u_{0,x}^4-\varphi_{c_i}'^4\big)dx\right\vert+O\left(e^{-\sqrt{L}}\right)
\\ & =\dfrac{4}{3}\left\vert\int_{\mathcal{J}_i(0)}\big(u_{0,x}^2+\varphi_{c_i}'^2\big)\big(u_{0,x}+\varphi_{c_i}'\big)\big(u_{0,x}-\varphi_{c_i}'\big)dx\right\vert+O\left(e^{-\sqrt{L}}\right)
\\ & \leq 2\Vert u_{0,x}-\varphi_{c_i}'\Vert_{L^4\left(\mathcal{J}_i(0)\right)}^3\Vert u_{0,x}-\varphi_{c_i}'\Vert_{L^4\left(\mathcal{J}_i(0)\right)}+O\left(e^{-\sqrt{L}}\right)
\\ & \leq O(\varepsilon^4)+O\left(e^{-\sqrt{L}}\right).
\end{align*}
Adding-up all the previous estimates we conclude the proof of the lemma.
\end{proof}

\subsection{Proof of Theorem \ref{MT2}}

First of all, notice that by using \eqref{initial_cond_hyp_train} and the reverse triangular inequality we deduce
\begin{align*}
\left\vert E(u_0)-E\left(R_{\vec{z}_0}\right)\right\vert&=\left(\Vert u_0\Vert_{H^1}+\Vert R_{\vec{z}_0}\right\Vert _{H^1})\Big\vert \Vert u_0\Vert_{H^1}-\Vert R_{\vec{z}}\Vert_{H^1}\Big\vert
\\ & \leq \left(\Vert u_0\Vert_{H^1}+\Vert R_{\vec{z}_0}\right\Vert _{H^1}) \Vert u_0- R_{\vec{z}}\Vert_{H^1} =O(\varepsilon^4).
\end{align*}
On the other hand, by using Lemma \ref{lemm_four_three} together with Lemma \ref{lem_four_six} as well as the previous estimate, recalling that $E\big(R_{\vec{z}_0}\big)=\sum_{i=1}^n E(\varphi_{c_i})+O(\exp(-L/4))$, we obtain
\begin{align*}
\left\Vert u(t^*)-\sum_{i=1}^n\varphi_{c_i}\big(\cdot-x_i(t^*)\big)\right\Vert_{H^1}^2&=E(u_0)-\sum_{i=1}^n E(\varphi_{c_i})
\\ & \quad -4\sum_{i=1}^n\sqrt{c_i}\big(M_i(t^*)-\sqrt{c_i}\big)+O\left(e^{-\sqrt{L}}\right)
\\ & = O\left(\varepsilon^4\right)+O\left(\varepsilon^2\right)+O\left(L^{-1/4}\right)=O\left(\varepsilon^2\right)+O\left(L^{-1/4}\right).
\end{align*}
In other words, there exists $\widetilde{C}>0$ such that \[
\left\Vert u(t^*)-\sum_{i=1}^n\varphi_{c_i}\big(\cdot-x_i(t^*)\big)\right\Vert_{H^1}^2\leq \widetilde{C}\left(\varepsilon^2+L^{-1/4}\right).
\]
Therefore, by taking $C$, the constant appearing in \eqref{neigh_assumption}, so that $C^2=4\widetilde{C}$ we conclude the proof of the theorem. \qed

\subsection{Proof of Corollary \ref{cor_MT2}}\label{sub_cor_multipeakon}
As we mentioned in the introduction, the Novikov equation \eqref{nov_eq_2} possesses multi-peakon-antipeakon solutions given by \[
u(t,x)=\sum_{i=1}^np_i(t)e^{-\vert x-q_i(t)\vert}, \quad n\in\N
\]
where the pairs $(p_i,q_i)\in\R^2$ satisfy the Hamiltonian system of ODEs: \begin{align*}
\begin{cases}
\dfrac{dq_i}{dt}=u^2(q_i)=\displaystyle\sum_{j,k=1}^np_jp_ke^{-\vert q_i-q_j\vert-\vert q_i-q_k\vert},
\\ \displaystyle\dfrac{dp_i}{dt}=-p_iu(q_i)u_x(q_i)=p_i\sum_{j,k=1}^np_jp_k\sgn(q_i-q_j)e^{-\vert q_i-q_j\vert-\vert q_i-q_k\vert}.\end{cases}
\end{align*}
It is easy to check that the local solutions of this differential system can be uniquely extended as long as the $q_i$'s remain different from each other. In fact, if for some time $t^*$ and some $i\neq j$ we have $q_i(t^*)=q_j(t^*)$, then  uniqueness fails and this breakdown leads to the usually subtle question regarding continuation of solutions beyond the collision. In the Camassa-Holm case this latter question is rather well-understood (see for instance \cite{BrCo,BrCo2,HoRa,HoRa2}). However, in \cite{HoLuSz} Theorem $4.5$, Hones et al. proved that if  at the initial time all $p_i$'s are positives, i.e. there are only peakons, then all $q_i$'s stay different from each other for all times. Of course, the case where there are only antipeakons also holds, however this is not longer true if we allow the existence of peakons and antipeakons simultaneously. More precisely, Hones et. al. proved that if at the initial time \begin{align}\label{initial_hyp_cor_p_i}
p_1^0,...,p_n^0>0 \quad \hbox{and}\quad q_1^0<...<q_n^0,
\end{align}
then, \eqref{initial_hyp_cor_p_i} holds for all times $t\in\R$. In particular, under these hypothesis the different peakons never overlaps each other. For example, if a large peak follows a smaller one, due to its different speeds, they shall eventually get close enough so that the larger one shall transfer part of its energy to the smaller one. Then, the smallest shall become the largest and both peakons shall be well ordered.

\medskip

Regarding the asymptotics of $(p_i,q_i)(t)$, in \cite{HoLuSz}  Hones et al. also proved that under these hypotheses the following equalities hold:
\begin{align}\label{asymptotics_p_q_i}
\lim_{t\to+\infty} p_i^2(t)=\lim_{t\to+\infty}\dot{q}_i(t)=\lambda_i^2 \quad \hbox{ and }\quad 
\lim_{t\to-\infty} p_i^2(t)=\lim_{t\to-\infty}\dot{q}_i(t)=\lambda_{n+1-i}^2,
\end{align}
where we recall that the values $\lambda_i$ correspond to the square roots of the eigenvalues of the matrix $TPEP$ defined in the statement of Corollary \ref{cor_MT2}.

\medskip

Now, let $\delta>0$ small but fixed and let us consider $\big(p_i(0),q_i(0))$ satisfying \eqref{initial_hyp_cor_p_i}. Hence, from the asymptotics of $p_i(t)$ and $q_i(t)$ we deduce the existence of a time $T\gg1$ sufficiently large such that for all $i=2,...n$ we have \[
q_i(T)-q_{i-1}(T)\geq L \quad \hbox{and}\quad q_{i-1}(-T)-q_i(-T)\geq L,
\]
with $L$ being any positive number satisfying $L>2\max\big\{L_0,\left(\tfrac{\delta}{\textbf{A}}\right)^{-8}\big\}$, where $\textbf{A}=2\max\{1,A\}$ and $A>0$ is the implicit constant involved in \eqref{orb_concl}. Thus, by using the second item in Theorem \ref{theorem_gwp} we deduce that there exists $\varepsilon>0$ small enough only depending on $T$ and $\delta$  such that for any initial data $u_0\in Y_+(\R)$ satisfying \eqref{hyp_cor_orb} the following holds: For all $t\in[-T,T]$ we have
\begin{align}\label{bound_compact_time}
\left\Vert u(t)-\sum_{i=1}^np_i(t)e^{-\vert x-q_i(t)\vert}\right\Vert_{H^1}\leq \left(\dfrac{\delta}{2\textbf{A}}\right)^4.
\end{align}
On the other hand, due to the fact that the Novikov equation \eqref{nov_eq_2} is invariant under the transformation $(t,x)\mapsto(-t,-x)$, Theorem \ref{MT2} also holds when replacing $t$ by $-t$, $z_i$ by $-z_i$ and $x_i(t)$ by $-x_i(-t)$. This give us the same stability result backwards in time for a train of peakons that are ordered in the inverse order than in the statement of Theorem \ref{MT2}. Therefore, by gathering \eqref{initial_hyp_cor_p_i} with \eqref{bound_compact_time} together with Theorem \ref{MT2} we conclude \eqref{first_part_cor}.

\medskip

Now, to prove the second part of the corollary, it is enough to notice that by using \eqref{asymptotics_p_q_i}, by making $T$ bigger if necessary, without loss of generality we may also assume that \[
\vert p_i(T)-\lambda_i\vert \leq \dfrac{1}{100n}\left(\dfrac{\delta}{\textbf{A}}\right)^4 \quad \hbox{and}\quad \vert p_i(-T)-\lambda_{n+1-i}\vert \leq \dfrac{1}{100n}\left(\dfrac{\delta}{\textbf{A}}\right)^4.
\]
Thus, by using \eqref{bound_compact_time} we obtain that \[
\left\Vert u(T,\cdot)-\sum_{i=1}^n\lambda_ie^{-\vert x-q_i(T)\vert}\right\Vert_{H^1}+
\left\Vert u(-T,\cdot)-\sum_{i=1}^n\lambda_{n+1-i}e^{-\vert x-q_i(-T)\vert}\right\Vert_{H^1}\leq \left(\dfrac{\delta}{\textbf{A}}\right)^4.
\]
Hence, we conclude by using Theorem \ref{MT2}. The proof is complete. \qed

\medskip

\section{Asymptotic stability of a train of peakons}\label{sec_MT5}

In this section we aim to prove Theorem \ref{MT5}. Notice that gathering this latter result with the asymptotics for multipeakons stated in subsection \ref{sub_cor_multipeakon} and Corollary \ref{cor_MT2}, we immediately obtain Corollary \ref{cor_MT5}. 

\medskip

From now on we shall follow Molinet's ideas (see \cite{Mo}) for the Camassa-Holm equation, which are based on the proof of asymptotic stability for the sum of $n$-solitons for the gKdV equation (see \cite{MaMeTs}).

\subsection{Almost monotonicity lemma}\label{sec_six_one}

In the rest of this paper we shall need to explicitly study the behavior of the solution $u(t)$ on both, the left and right part of the space. We recall that the weight function $\Psi$ is given by \begin{align}\label{psi_def_2}
\Psi(x):=\dfrac{2}{\pi}\arctan\left(\exp(\tfrac{x}{6})\right), \quad \hbox{so that}\quad \Psi(x)\to 1 \ \hbox{ as }\ x\to+\infty.
\end{align}
Now, let us fix (for the rest of this paper) some parameters: \begin{align}\label{parameters}
\varepsilon^\star:=\left(\dfrac{\min\{1,\sigma\}}{2^{18}}\right)^8, \ \  L_0:=\left(2^{18}\max\{1,\sigma^{-1}\}\right)^{32},
\end{align}
and $\sigma:=\mathbf{C}\min\{c_2-c_1,...,c_n-c_{n-1},\beta\}$, where $\mathbf{C}:=\min\{1,\widetilde{C}^{-1}\}$ and $\widetilde{C}>0$ is the implicit constant involved in \eqref{orb_concl}.
\begin{lem}[Almost monotonicity of the energy]\label{tech_lem_mon_exp}
Assume that we are under the hypothesis and notations of Theorem \ref{MT5} and Lemma \ref{mod_lemma}. Additionally, set $\delta_1,...,\delta_n\in(0,1)$ and define the family of differentiable functions $z_1,...,z_n:\R\to\R$ as follows \[
\delta_1:=1-\tfrac{\beta}{4c_1}, \quad z_1(t):=\tfrac{\beta}{2}t \quad \hbox{and} \quad \delta_i:=\tfrac{5}{8}(c_i-c_{i-1}), \quad  z_i(t):=(1-\delta_i)\widetilde{x}_i(t).
\]
Then, there exists $R_0>0$ sufficiently large such that for all $t\in\R$ it holds 
\begin{align}\label{condition_tail}
\Vert u(t)\Vert_{L^\infty(x-\widetilde{x}_n(t)>R_0)}\leq \dfrac{(1-\delta_n)c_n}{\mathbf{b}},\ \hbox{ where } \ \mathbf{b}:=2^6\max\{1,\Vert u_0\Vert_{H^1}\}.
\end{align}
Moreover, for any $R>R_0$ the following property holds: For each $i=1,...,n$ there exists $t_R^i>0$ only depending on $R$ such that for any $t_0^i\geq t_R^i$, defining the modified energy functionals
\begin{align*}
\mathrm{I}_{i,t_0^i}^{\pm R}:=\int\big(u^2+u_x^2\big)(t,x)\Psi\big(\cdot-z_{i}^{\pm R}(t)\big)dx  \ \,\hbox{ where } \ \, z_{i}^{\pm R}(t):=\widetilde{x}_i(t_0^i)\pm R+z_i(t)-z_i(t_{0}^i),
\end{align*}
the following estimates hold:
\begin{align}\label{AM_right_n_energy}
\forall t\leq t_0^n, \ \, \mathrm{I}_{n,t_0^n}^R(t_0^n)-\mathrm{I}_{n,t_0^n}^R(t)\leq Ce^{-R/6}, \ \hbox{ and } \,\ \forall t\geq t_0^n, \, \ \mathrm{I}_{n,t_0^n}^{-R}(t)-\mathrm{I}_{n,t_0^n}^{-R}(t_0^n)\leq Ce^{-R/6}.
\end{align}
Moreover, for any $i=1,...,n-1$ we have \begin{align}\label{AM_right_i_energy}
\mathrm{I}_{i,t_0^i}^{-R}(t)-\mathrm{I}_{i,t_0^i}^{-R}(t_0^i)\leq Ce^{-R/24}, \ \hbox{ for all } t\geq t_0^i.
\end{align}
\end{lem}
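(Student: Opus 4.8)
The plan is to first pin down the pointwise tail bound \eqref{condition_tail} and then, for each weighted energy $\mathrm{I}_{i,t_0^i}^{\pm R}$, to derive a differential inequality whose right-hand side decays exponentially in the distance separating the transition region of the weight from the bulk of $u(t)$, integrating it over the relevant time half-line. For the tail bound I would argue that, since the hypotheses of Theorem \ref{MT2} hold with $T=+\infty$, the solution stays for all time in the modulation tube of Lemma \ref{mod_lemma}, so that $\Vert u(t)-\sum_{i=1}^n\varphi_{c_i}(\cdot-\widetilde{x}_i(t))\Vert_{H^1}=O(\varepsilon+L^{-1/8})$, which the choice of parameters \eqref{parameters} renders as small as needed. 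On $\{x-\widetilde{x}_n(t)>R_0\}$ the peakon train has $H^1$-norm $\lesssim\sqrt{c_n}\,e^{-R_0}$, so by Sobolev embedding $\Vert u(t)\Vert_{L^\infty(x-\widetilde{x}_n(t)>R_0)}\leq C(\varepsilon+L^{-1/8})+C\sqrt{c_n}\,e^{-R_0}$, and taking $R_0$ large together with $\varepsilon^\star,L_0$ as in \eqref{parameters} yields \eqref{condition_tail}. The role of the factor $\Vert u_0\Vert_{H^1}$ inside $\mathbf{b}$ is precisely that, $E$ being conserved, one gets $\Vert u\Vert_{L^\infty}^2\leq\Vert u\Vert_{L^\infty}\Vert u\Vert_{H^1}\leq(1-\delta_n)c_n/2^6$ on the tail; since the local transport speed is exactly $u^2=\dot q$, this guarantees that on the tail the solution travels strictly slower than the weight, which moves with speed $\dot z_n\approx(1-\delta_n)c_n$.

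For the monotonicity itself I would differentiate, writing
\[
\dfrac{d}{dt}\mathrm{I}_{i,t_0^i}^{\pm R}(t)=-\dot z_i(t)\int\big(u^2+u_x^2\big)\Psi'\big(\cdot-z_i^{\pm R}(t)\big)\,dx+\int\partial_t\big(u^2+u_x^2\big)\Psi\big(\cdot-z_i^{\pm R}(t)\big)\,dx,
\]
and then insert the equation \eqref{nov_eq_2} into the second term, integrating by parts to produce a density--flux pair for $u^2+u_x^2$. The local part of the evolution yields a flux that is a combination of cubic and quartic densities of the form $u^2(u^2+u_x^2)$ and $uu_x(u^2+u_x^2)$ tested against $\Psi'$, together with terms carrying $\Psi''$ and $\Psi'''$, while the convolution against $p=\tfrac12 e^{-\vert x\vert}$ produces a nonlocal remainder. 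Using the elementary bounds $\vert\Psi''\vert+\vert\Psi'''\vert\leq C\Psi'$, valid for $\Psi(x)=\tfrac{2}{\pi}\arctan(e^{x/6})$, I would reduce every local contribution to a single integral against $\Psi'$.

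The sign is then decided on $\supp\Psi'(\cdot-z_i^{\pm R}(t))$, where the solution is small: for $(i,+R)=(n,+R)$ this is exactly \eqref{condition_tail}, while for the left weights the separation \eqref{mod_parameter_bound} places the transition in a valley where the peakon train is exponentially small and $u$ is $H^1$-close, so that $\Vert u\Vert_{L^\infty(\supp\Psi')}$ is small. Consequently the cubic--quartic flux satisfies $\vert\int P(u,u_x)\Psi'\vert\leq C\Vert u\Vert_{L^\infty(\supp\Psi')}\int(u^2+u_x^2)\Psi'\leq\tfrac{1}{2}\dot z_i\int(u^2+u_x^2)\Psi'$ and is absorbed by the good term $-\dot z_i\int(u^2+u_x^2)\Psi'$. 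What survives is the nonlocal remainder, controlled by the exponential localisation of $p$ and by the fact that the transition sits at distance $\gtrsim R+\sigma\vert t-t_0^i\vert$ from every $\widetilde{x}_j(t)$ (a consequence of the speed ordering $\dot z_i\approx(1-\delta_i)c_i$ together with \eqref{mod_parameter_bound}), giving the bound $Ce^{-(R+\sigma\vert t-t_0^i\vert)/6}$. Thus $\tfrac{d}{dt}\mathrm{I}_{i,t_0^i}^{\pm R}(t)\leq Ce^{-(R+\sigma\vert t-t_0^i\vert)/6}$ in the relevant time direction, and integrating over the corresponding half-line and using $\int_0^\infty e^{-\sigma s/6}\,ds<\infty$ yields \eqref{AM_right_n_energy}; the slower effective speed of the left weights (for which the gap to the neighbouring bump, rather than $c_n$, governs the separation rate) costs a factor and produces the weaker exponent in \eqref{AM_right_i_energy}.

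I expect the main obstacle to be twofold. First, controlling the nonlocal convolution term uniformly in $t$ requires that the transition region of $\Psi$ never meet a peakon, which is exactly why the speeds $z_i$ are chosen between the tail velocity and the bump velocity and why the lower bound on $\widetilde{x}_i-\widetilde{x}_{i-1}$ from \eqref{mod_parameter_bound} is essential; second, guaranteeing the $L^\infty$-smallness of $u$ on $\supp\Psi'$ for \emph{all} times is what forces the tail bound \eqref{condition_tail} and the orbital-stability trapping of \eqref{orb_concl} to be in place before this computation. Tracking the relative speeds carefully enough to extract the precise decay rates $e^{-R/6}$ and $e^{-R/24}$ is the most delicate piece of bookkeeping.
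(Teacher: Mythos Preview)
Your overall strategy is the paper's strategy, and the tail bound \eqref{condition_tail} is handled exactly as you say. There is, however, one genuine imprecision in your treatment of the flux terms that would make the argument fail as written.

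You write that the cubic--quartic flux is absorbed because $\Vert u\Vert_{L^\infty(\supp\Psi')}$ is small. But $\Psi'$ vanishes nowhere, so $\supp\Psi'=\R$ and $\Vert u(t)\Vert_{L^\infty(\R)}\approx\sqrt{c_n}$ is \emph{not} small. The paper (see the derivative identity \eqref{dt_I_J_i} already computed in the proof of Lemma~\ref{AM_orb_train}) instead splits $\R$ into two regions for \emph{every} term $\mathrm{J}_1,\mathrm{J}_2,\mathrm{J}_3$, local and nonlocal alike: a region away from all peakons where $\vert u\vert$ is small (this is \eqref{condition_tail} for the $(n,+R)$ case, and an interval $\Pi_i(t)$ between consecutive bumps otherwise), on which the quartic integrand is absorbed by $-\dot z_i\int(u^2+u_x^2)\Psi'$; and the complementary region near the peakons, on which one shows directly that $x-z_i^{\pm R}(t)$ has the sign and magnitude forcing $\Psi'\lesssim e^{-(R+\sigma\vert t-t_0^i\vert)/6}$. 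Both regions contribute, and the exponentially small remainder comes from the second one for all three $\mathrm{J}_k$, not only from a ``nonlocal remainder''.

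A second point: for the nonlocal pieces $\int\{p*(3uu_x^2+2u^3)\}u\Psi'$ and $\int\{p_x*u_x^3\}u\Psi'$ the paper does not invoke the exponential decay of $p$ directly. It uses the pointwise inequality $(1-\partial_x^2)\Psi'\geq(1-\tfrac{10}{K^2})\Psi'$, hence $p*\Psi'\lesssim\Psi'$, which transfers the weight through the convolution and reduces these terms to the same form as the local one before the region-splitting is applied. Your alternative via the localisation of $p$ could be made to work, but it is an additional argument you would have to supply. With these two fixes your outline coincides with the paper's proof.
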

\begin{proof}
The proof is somehow contained in the proof of Lemma \ref{AM_orb_train}, which at the same time is somehow contained in the proof of Lemma $3.2$ in \cite{Pa}. However, for the sake of completeness we show a sketch of the proof in the appendix. See Section \ref{sec_ap_tech_lem}.
\end{proof}
Now, before going further we shall need to introduce some additional notation. For $v\in Y$ and $R>0$ we define the functionals $\mathcal{J}_l^R$ and $\mathcal{J}_r^R$ given by \begin{align*}
\mathcal{J}_r^R:=\left\langle v^2+v_x^2,\Psi(\cdot-R)\right\rangle \ \hbox{ and }\ \, \mathcal{J}_l^R:=\left\langle v^2+v_x^2,1-\Psi(\cdot+R)\right\rangle.
\end{align*}
Now, under the notations of the previous lemma notice that, for this choice of parameters and from the definitions of $\mathcal{J}_r^R$ and $\mathrm{I}_{i}^R$ we immediately obtain that \[
\mathcal{J}_{n,r}^R(t):=\mathcal{J}_r^R\big(u(t,\cdot+\widetilde{x}_n(t))\big)\geq \mathrm{I}_{n}^R(t), \quad \forall t\leq t_0^n,
\]
where $\mathrm{I}_{n}^R(t)$ is the functional defined in Lemma \ref{tech_lem_mon_exp}. Moreover, notice that in particular we have $\mathcal{J}_{n,r}^R\big(t_0^n\big)=\mathrm{I}_{n}^R(t_0^n)$. Thus, by using \eqref{AM_right_n_energy} we obtain \begin{align}\label{ineq_J_r}
\mathcal{J}_{r}^R\big(u(t_0^n,\cdot+\widetilde{x}_n(t_0^n))\big)\leq \mathcal{J}_{r}^R\big(u(t,\cdot+\widetilde{x}_n(t))\big)+C e^{-\frac{R}{6}}, \quad \forall t\leq t_0^n,
\end{align}
where $C>0$ is the constant appearing in \eqref{AM_right_n_energy}. On the other hand,  for the sake of notation we also introduce the functional $\widetilde{\mathrm{I}}_{i}^R(t)$ given by
\[
\widetilde{\mathrm{I}}_{i}^R(t):=\left\langle u^2+u_x^2,1-\Psi\big(\cdot-\delta_i\widetilde{x}_i(t_0^i)+R-(1-\delta_i)\widetilde{x}_i(t)\big)\right\rangle=E(u)-\mathrm{I}_{i}^{-R}(t),
\]
where the parameter $\delta_i>0$ is defined in Lemma \ref{tech_lem_mon_exp}. Notice that due to the energy conservation together with inequality \eqref{AM_right_n_energy} we deduce \begin{align}\label{a_m_left_energy}
\widetilde{\mathrm{I}}_{i}^R(t)\geq \widetilde{\mathrm{I}}_{i}^R(t_0^i)-Ce^{-R/6}.
\end{align}
Therefore, from the energy conservation and the previous inequality we deduce that for all $i=1,...,n$ and all $t\geq t_0^i$ we have \begin{align}\label{ineq_J_l}
\mathcal{J}_l^R\big(u(t,\cdot+\widetilde{x}_i(t))\big)\geq \mathcal{J}_l^R\left(u\left(t_0^i,\cdot+\widetilde{x}_i(t_0^i)\right)\right)-Ce^{-\frac{R}{6}}.
\end{align}
The case of $\mathcal{J}_{i,r}^R$ is more subtle and its proof is the aim of the following lemma.
\begin{lem}\label{tech_lem_left}
Assume we are under the hypothesis and notations of Lemma \ref{tech_lem_mon_exp}. For $i=1,...,n-1$ define the following modified energy functionals \begin{align*}
\mathcal{J}_{i,r}^R(t):=\int \big(u^2+u_x^2\big)(t,x)\Psi\big(\cdot-\widetilde{x}_i(t)-R\big)dx.
\end{align*}
Then, for any $R>0$ and all pair $(t,t_0)$ satisfying $t_R^{i+1}\leq t\leq t_0$ the following inequality holds: \begin{align}\label{ineq_i_J_r}
\mathcal{J}_{i,r}^R(t_0)\leq \mathcal{J}_{i,r}^R(t)+Ce^{-R/24},
\end{align}
where $\{t_R^i\}_{i=1}^n$ are defined in the proof of Lemma \ref{tech_lem_mon_exp} (see \eqref{t_i_R_def}).
\end{lem}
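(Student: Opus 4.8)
The plan is to follow the same almost‑monotonicity scheme already used for Lemma \ref{AM_orb_train} (and, in turn, for Lemma $3.2$ in \cite{Pa}), the only genuinely new feature being that the weight in $\mathcal J_{i,r}^R$ is centered at $\widetilde x_i(t)+R$ and therefore travels at the \emph{full} speed $\dot{\widetilde x}_i(t)\approx c_i$ of the $i$-th bump rather than at an intermediate speed. First I would differentiate in time,
\[
\frac{d}{dt}\mathcal J_{i,r}^R(t)=\int \partial_t\big(u^2+u_x^2\big)\,\Psi\big(\cdot-\widetilde x_i(t)-R\big)\,dx-\dot{\widetilde x}_i(t)\int \big(u^2+u_x^2\big)\,\Psi'\big(\cdot-\widetilde x_i(t)-R\big)\,dx,
\]
and insert the evolution of the energy density coming from \eqref{nov_eq_2}. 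Integrating by parts to move every derivative onto $\Psi$, the computation already carried out for Lemma \ref{AM_orb_train} produces a local flux of the type $u^2(u^2+u_x^2)$ together with purely nonlocal contributions involving $p*(\cdots)$, so that
\[
\frac{d}{dt}\mathcal J_{i,r}^R(t)=-\int \big(\dot{\widetilde x}_i-u^2\big)\big(u^2+u_x^2\big)\,\Psi'\big(\cdot-\widetilde x_i-R\big)\,dx+\mathcal R(t),
\]
with $\mathcal R(t)$ gathering the nonlocal remainder.

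Next I would analyse the sign of the local term on the support of $\Psi'$, concentrated around $\widetilde x_i(t)+R$. Using \eqref{mod_parameter_bound} together with the peakon speed law $\dot{\widetilde x}_i\approx M_i^2=\max_{\mathcal J_i(t)}u^2$, one has $\dot{\widetilde x}_i-u^2\geq -O(\sqrt\alpha)$ throughout the $i$-th cell and, crucially, $\dot{\widetilde x}_i-u^2\approx c_i>0$ in the valley $\widetilde x_i+R$, since there $u^2\lesssim(\sqrt{c_i}e^{-R}+\sqrt\alpha)^2\ll c_i$; hence the local term is bounded by $-\tfrac{c_i}{2}\int(u^2+u_x^2)\Psi'$ plus a harmless error. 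The only place where the sign turns against us is where $\Psi'$ overlaps the faster bumps $i+1,\dots,n$, where $u^2\approx c_{i+1}>\dot{\widetilde x}_i$. There, however, by \eqref{psi_def_2} the weight is of size $e^{-(\widetilde x_{i+1}-\widetilde x_i-R)/6}$, and the separation bound $\widetilde x_{i+1}(t)-\widetilde x_i(t)\geq\tfrac34 L+\tfrac12(c_{i+1}-c_i)t$ from \eqref{mod_parameter_bound} turns this into a quantity decaying \emph{exponentially in $t$} once $t\geq t_R^{i+1}$. Integrating over $[t,t_0]$ then yields a convergent contribution of order $Ce^{-R/24}$; this is exactly where the restriction $t\geq t_R^{i+1}$ and the rate $1/24$ (a fixed fraction of $1/6$ after accounting for the geometry of the gaps) enter. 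The nonlocal remainder $\mathcal R(t)$ is controlled by splitting $p*(\cdots)$ according to the bumps: the decay of $p=\tfrac12 e^{-|x|}$ combined with the smallness of $u$ near $\widetilde x_i+R$ reproduces the same dichotomy, a part absorbed by $-\tfrac{c_i}{2}\int(u^2+u_x^2)\Psi'$ and a part decaying in time like the bump $i+1$ term.

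The hard part will be that, because the weight travels at the exact speed of bump $i$, the contribution coming from bump $i$ itself sits at the \emph{fixed} distance $R$ and does not decay in time; a naive estimate of the $O(\sqrt\alpha)$ sign defect near the peak would only give $(t_0-t)\sqrt\alpha e^{-R/6}$, which is not uniform in $t_0-t$. The resolution I would push through is that, in the comoving frame $\xi=x-\widetilde x_i(t)$, the leading exact‑peakon profile of bump $i$ is stationary and contributes nothing to $\tfrac{d}{dt}\mathcal J_{i,r}^R$, so that the genuine residue is the perturbative $O(\sqrt\alpha)$ defect weighted by $\Psi'\sim e^{-R/6}$ near $\xi=0$; this residue must be reabsorbed into the favorable term $-\tfrac{c_i}{2}\int(u^2+u_x^2)\Psi'$ by exploiting $\dot{\widetilde x}_i\geq M_i^2-O(\sqrt\alpha)$ and $\alpha\ll c_i$. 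Making this absorption quantitative, so that the final differential inequality reads $\tfrac{d}{dt}\mathcal J_{i,r}^R(t)\leq Ce^{-\gamma t}$ for some $\gamma>0$, is the crux of the argument; once it holds, integrating from $t$ to $t_0$ and using $t\geq t_R^{i+1}\gtrsim R$ gives precisely \eqref{ineq_i_J_r}.
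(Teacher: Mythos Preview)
You correctly identify the central difficulty: differentiating $\mathcal{J}_{i,r}^R$ directly means the weight sits at $\widetilde{x}_i(t)+R$ and travels at the full speed $\dot{\widetilde{x}}_i\approx c_i$, so the $i$-th bump remains at the fixed distance $R$ for all time and its contribution to $\tfrac{d}{dt}\mathcal{J}_{i,r}^R$ does not decay. Your proposed resolution, however, does not close. After cancelling the exact peakon contribution you are left with a residue of order $O(\sqrt{\alpha})\,e^{-R/6}$ coming from the region $|x-\widetilde{x}_i(t)|\lesssim 1$; but there is \emph{no} favorable term available there to absorb it. Near the bump one has $\dot{\widetilde{x}}_i-u^2=O(\sqrt{\alpha})$, so the local flux $-(\dot{\widetilde{x}}_i-u^2)(u^2+u_x^2)\Psi'$ is itself only $O(\sqrt{\alpha})e^{-R/6}$ with indeterminate sign, and the genuinely negative part of the flux is concentrated in the valley near $\widetilde{x}_i+R$, where the energy density $u^2+u_x^2$ is tiny. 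Integrating the $O(\sqrt{\alpha})e^{-R/6}$ residue over $[t,t_0]$ produces $(t_0-t)\,O(\sqrt{\alpha})e^{-R/6}$, which is not uniform in $t_0$; your final claim $\tfrac{d}{dt}\mathcal{J}_{i,r}^R(t)\leq Ce^{-\gamma t}$ therefore cannot be obtained this way.

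The paper's fix is precisely to avoid this situation by \emph{not} differentiating $\mathcal{J}_{i,r}^R$ at all. Instead one introduces an auxiliary functional $\mathcal{I}_i^R(t)$ with weight centered at
\[
z_i^R(t)=\widetilde{x}_i(t_0)+R+\tfrac{3}{4}\big(\widetilde{x}_i(t)-\widetilde{x}_i(t_0)\big),
\]
so that $\dot z_i^R=\tfrac{3}{4}\dot{\widetilde{x}}_i\approx\tfrac{3}{4}c_i$. The point is twofold: at the terminal time $t_0$ one has $z_i^R(t_0)=\widetilde{x}_i(t_0)+R$, hence $\mathcal{I}_i^R(t_0)=\mathcal{J}_{i,r}^R(t_0)$; while for $t\leq t_0$ the slower speed ensures $z_i^R(t)\geq \widetilde{x}_i(t)+R$, hence $\mathcal{I}_i^R(t)\leq \mathcal{J}_{i,r}^R(t)$. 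It therefore suffices to prove the almost-monotonicity for $\mathcal{I}_i^R$. Now the distance from the weight to the $i$-th bump is
\[
z_i^R(t)-\widetilde{x}_i(t)=R+\tfrac{1}{4}\big(\widetilde{x}_i(t_0)-\widetilde{x}_i(t)\big)\approx R+\tfrac{c_i}{4}(t_0-t),
\]
which \emph{grows} as $t$ decreases from $t_0$, so the contribution of bump $i$ decays like $e^{-R/6}e^{-c_i(t_0-t)/24}$ and is integrable. On the other side one checks $z_i^R(t)\leq y_{i+1}(t)$ on the relevant interval, which keeps bump $i+1$ at a growing distance as well, and the smallness of $u$ in the valley $\Pi_i(t)$ gives the absorption. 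A separate (and simpler) argument with weight at $y_{i+1}(t)$ handles the remaining time interval $[t_R^{i+1},\widetilde{t}_0]$. The use of an intermediate-speed auxiliary weight is the missing idea.
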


\begin{proof}
See the appendix, Section \ref{ap_tech_left}.
\end{proof}

\subsection{End of the proof of Theorem \ref{MT5}}

The following property is the analogous convergence result for a single peakon in the case of a train of peakons. %We recall that this ensures that the $\omega$-limit set for the weak $H^1$-topology of the orbit of $u_0$ consists of initial data that give rise to $H^1$-almost localized solutions.
\begin{lem}\label{convergence_result}
For every $i=1,...,n$ the following strong convergence holds: \begin{align}\label{local_strong_conv}
u\big(t,\cdot+\widetilde{x}_i(t)\big)-\rho_i(t)\varphi\to 0 \ \hbox{ in } \ H^1_{loc}(\R) \ \hbox{ as } \ t\to+\infty,
\end{align}
where $\rho_i(t):=u\big(t,x_i(t)\big)$, i.e. the maximum of $u(t)$ over the sets $\mathcal{J}_i(t)$ defined in Lemma \ref{mod_lemma}. Moreover, in the case $i=n$ the following holds: For every $A>0$ we have \begin{align}\label{conv_two}
u\big(t,\cdot+\widetilde{x}_n(t)\big)-\rho_n(t)\varphi\to0 \ \hbox{ in } \ H^1\big((-A,+\infty)\big) \ \hbox{ as } \ t\to+\infty.
\end{align}
\end{lem}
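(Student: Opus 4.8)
The plan is to run the by-now standard asymptotic-compactness-plus-rigidity scheme (as in \cite{MaMeTs} for gKdV, \cite{Mo} for Camassa--Holm, and as in the proof of Theorem \ref{AS_single_peakon}), localized around the $i$-th bump. I would argue by contradiction. If \eqref{local_strong_conv} failed for some index $i$, there would exist $A>0$, $\varepsilon_0>0$ and a sequence $t_m\to+\infty$ with
\[
\left\Vert u\big(t_m,\cdot+\widetilde{x}_i(t_m)\big)-\rho_i(t_m)\varphi\right\Vert_{H^1(-A,A)}\geq \varepsilon_0 .
\]
Writing $v_m:=u(t_m,\cdot+\widetilde{x}_i(t_m))$, the orbital stability Theorem \ref{MT2} together with the modulation Lemma \ref{mod_lemma} guarantees that $\{v_m\}$ is bounded in $H^1(\R)$ and that its momentum densities $y_m:=v_m-v_{m,xx}$ are bounded in $\mathcal{M}_b^+(\R)$. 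Passing to a subsequence I would extract weak limits $v_m\rightharpoonup\phi$ in $H^1(\R)$, $y_m\rightharpoonup\mu$ weakly-$*$ in $\mathcal{M}_b^+$, and, since $H^1\hookrightarrow L^\infty$, a limit $\rho_i(t_m)\to\rho^\star$ of the local maxima; in particular $\phi\in Y_+(\R)$.

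Next I would identify $\phi$ as a peakon. Since the Novikov equation \eqref{nov_eq_2} is autonomous and translation invariant, $u_m(\tau,x):=u(t_m+\tau,x+\widetilde{x}_i(t_m))$ is the solution emanating from $v_m$, and the continuity of the flow with respect to the data (item 2 of Theorem \ref{theorem_gwp}) gives $u_m(\tau)\rightharpoonup U(\tau)$ for every $\tau$, where $U\in C_{ti}(\R,Y_+)$ is the global solution with $U(0)=\phi$. The decisive input is the almost-monotonicity Lemmas \ref{tech_lem_mon_exp} and \ref{tech_lem_left}: because the bump of $u$ in the frame of $u_m$ sits near $\widetilde{x}_i(t_m+\tau)-\widetilde{x}_i(t_m)\approx c_i^\star\tau$ (using the velocity estimate \eqref{mod_parameter_bound}) and consecutive bumps separate linearly in time, the energy lying to the right and to the left of the $i$-th bump is almost monotone. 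Passing these estimates to the limit shows that $U$ is $H^1$-almost localized, i.e. the energy of $U(\tau)$ outside a ball of radius $R$ around its center is $O(e^{-R/C})$ uniformly in $\tau$. By the rigidity property of the Novikov equation established in \cite{Pa} (the engine behind Theorem \ref{AS_single_peakon}), every such almost localized $Y_+$-solution is a peakon, whence $\phi=\varphi_{c_i^\star}=\rho^\star\varphi$ with $\rho^\star=\sqrt{c_i^\star}$; the matching $\rho^\star=\lim_m\rho_i(t_m)$ follows from the uniform $C_{loc}$-convergence $v_m\to\phi$.

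It then remains to upgrade $v_m\rightharpoonup\rho^\star\varphi$ to strong convergence in $H^1_{loc}(\R)$, which contradicts the displayed lower bound and closes the argument. Here I would exploit the $Y_+$ structure: integrating by parts one has $E(v)=\langle v,y\rangle$, so the local energy is controlled through the pairing of $v_m$ against $y_m$; the tight convergence $y_m\rightharpoonup\mu$ together with the uniform local convergence $v_m\to\phi$ yields $\int_{-A}^A(v_m^2+v_{m,x}^2)\to\int_{-A}^A(\phi^2+\phi_x^2)$, which, combined with the weak $H^1$-convergence and lower semicontinuity of the norm, forces strong convergence on $(-A,A)$. Finally, for $i=n$ there is no bump to the right of the fastest peakon, so the one-sided estimate \eqref{AM_right_n_energy} provides localization on the whole half-line and prevents energy from leaking in from $+\infty$; this promotes the local statement to convergence in $H^1((-A,+\infty))$ for every $A>0$, which is \eqref{conv_two}.

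The main obstacle will be the combination of the second and third steps: deducing a uniform-in-time $H^1$-almost localization of the limit object $U$ from the time-sliced almost-monotonicity estimates (which are anchored at the moving weight centers $z_i(t)$ and the shifts $\widetilde{x}_i(t)$), and then the passage from weak to strong local convergence, where the nonnegativity of the momentum density and the identity $E(v)=\langle v,y\rangle$ are essential.
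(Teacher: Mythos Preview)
Your outline is the same compactness--plus--rigidity scheme the paper uses, and the large-scale structure (extract a subsequence, identify the limit solution as $H^1$-almost localized via the monotonicity lemmas, invoke the rigidity theorem from \cite{Pa}) is correct. Two points, however, are handled more carefully in the paper and are genuine gaps in your write-up.

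First, your upgrade from weak to strong $H^1_{loc}$ convergence via the global identity $E(v)=\langle v,y\rangle$ does not localize: on a bounded interval $(-A,A)$ one picks up boundary terms $v v_x\big|_{-A}^{A}$, and you have no control forcing these to match in the limit; moreover weak-$*$ convergence of $y_m$ in $\mathcal{M}_b^+$ is not automatically tight. The paper instead uses the pointwise bound $|v_x|\le v$ valid for every $v\in Y_+$ (see \eqref{positive_mom_1}--\eqref{positive_mom_2}): combined with the strong $L^2_{loc}$ convergence (coming from weak $H^1$ plus compact embedding), this gives $\limsup_k \Vert u_x(t_{n_k},\cdot+\widetilde x_i(t_{n_k}))\Vert_{L^2(K)}\le \Vert\varphi_{c_i^*}\Vert_{L^2(K)}=\Vert\varphi_{c_i^*}'\Vert_{L^2(K)}$, and hence norm convergence on $K$, which together with weak convergence forces strong $H^1(K)$ convergence.

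Second, the rigidity theorem only tells you $\phi=\varphi_{c_i^*}(\cdot-x_0)$ for \emph{some} shift $x_0$; you still need to show $x_0=0$. The paper does this by passing the orthogonality condition \eqref{mod_orthogonality} to the weak limit and then invoking the nondegeneracy property \eqref{orth_cond_def}, after first checking from the orbital stability bound that $|x_0|\ll 1$. Without this step you cannot conclude that the unique possible limit of $u(t,\cdot+\widetilde x_i(t))-\rho_i(t)\varphi$ is $0$, and your contradiction argument does not close. A related remark: item 2 of Theorem \ref{theorem_gwp} gives continuity of the flow under \emph{strong} $H^1$ convergence of the data, not weak; the passage $u_m(\tau)\rightharpoonup U(\tau)$ requires the additional compactness coming from the uniform $\mathcal M_b^+$ bound on the momentum densities, exactly as in Proposition 4.2 of \cite{Pa}.
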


\begin{proof}
This is a consequence of Proposition $4.2$ in \cite{Pa} and the second inequality in \eqref{mod_parameter_bound}. In fact, notice that the proof of that proposition only requires Lemmas \ref{tech_lem_mon_exp} and \ref{tech_lem_left} of the present work to hold. Since the proof follows exactly the same lines as the ones in Proposition $4.2$ in \cite{Pa}, we only sketch it for the case $i=n$. In fact, following exactly the same lines it is possible to show that for any increasing sequence $t_n\to+\infty$ there exists a subsequence $\{t_{n_k}\}$ and a function $u_0^\star\in Y_+$ such that as $k\to+\infty$ we have \begin{align}\label{weak_strong_conv}
u\big(t_{n_k},\cdot+\widetilde{x}_n(t_{n_k})\big)\rightharpoonup u_0^\star  \ \hbox{ in } \ H^1(\R) \quad \hbox{and}\quad u\big(t_{n_k},\cdot+\widetilde{x}_n(t_{n_k})\big)\to u_0^\star \ \hbox{ in } \ H^{1^-}_{loc}(\R).
\end{align}
Then, by using the almost monotonicity inequalities \eqref{ineq_J_r} and \eqref{ineq_J_l} we can prove that the weak solution to equation \eqref{nov_eq_2} associated to $u_0^\star$ is actually an $H^1$-almost localized solution, and hence it is a peakon (c.f. Theorem $1.3$ in \cite{Pa}). Therefore, there exists $x_0\in\R$ and $c_n^*>0$ such that \[
u_0^\star=\varphi_{c_n^*}(\cdot-x_0).
\] 
On the other hand, notice that due to the local strong $L^2$ convergence we deduce that for all $K\subset \R $ compact we have \[
\lim_{k\to+\infty}\Vert u(t_{n_k},\cdot+\widetilde{x}_n(t_{n_k}))-\varphi_{c_n^*}\Vert_{L^2(K)}=0.
\]
On the other hand, due to the fact that $\vert v_x\vert\leq v$ for any $v\in Y_+$ we deduce \[
\liminf_{k\to+\infty}\Vert u_x(t_{n_k},\cdot+\widetilde{x}_n(t_{n_k}))\Vert_{L^2(K)}\leq \lim_{k\to+\infty}\Vert u(t_{n_k},\cdot+\widetilde{x}_n(t_{n_k}))\Vert_{L^2(K)}=\Vert \varphi_{c_n^*}\Vert_{L^2(K)}
\]
Hence, by using again that $\Vert \varphi'\Vert_{L^2(K)}=\Vert \varphi\Vert_{L^2(K)}$ we obtain \[
\liminf_{k\to+\infty}\Vert u(t_{n_k},\cdot+\widetilde{x}_n(t_{n_k}))\Vert_{H^1(K)}^2\leq2\Vert \varphi_{c_n^*}\Vert_{L^2(K)}^2=\Vert \varphi_{c_n^*}\Vert_{H^1(K)}^2,
\]
Thus, by a standard result in Functional Analysis we know that the weak convergence result together with the previous inequality implies that \begin{align}\label{strong_h1}
u(t_{n_k},\cdot+\widetilde{x}_n(t_{n_k}))-\varphi_{c_n^*}\to0 \ \hbox{ in }\  H^1_{loc} \  \hbox{ as } \  k\to+\infty.
\end{align}
Finally, let us prove the strong $H^1$ convergence in $(-A,\infty)$ for any fixed $A>0$. In fact, first of all, notice that the weak convergence result \eqref{weak_strong_conv} together with the uniform estimate \eqref{mod_bound} and the definition of $\varepsilon^\star$ implies that \[
\Vert \varphi_{c_n^\star}(\cdot-x_0)-\varphi_{c_n}\Vert_{H^1}\leq C\varepsilon^\star \quad \hbox{and} \quad 
\vert c_n-c_n^*\vert\ll \sigma,
\]
and hence, by using the local strong convergence \eqref{strong_h1} we infer that $\vert x_0\vert \ll 1$. On the other hand, notice that the weak convergence result \eqref{weak_strong_conv} forces $u_0^\star$ to satisfy the $n$-th orthogonality condition \eqref{mod_orthogonality}. Therefore, by using \eqref{orth_cond_def} we obtain that $x_0$ has to be equal to zero. Finally, notice that the convergence result \eqref{strong_h1} together with \eqref{mod_bound} implies that \[
\sqrt{c_n^*}=\lim_{k\to+\infty}\max_{\mathcal{J}_n(t_{n_k})}u(t_{n_k}).
\]
Thus, defining $\rho_n(t):=\max\{ u(t,\cdot): \ x\in\mathcal{J}_n(t)\} $ we deduce that as $k\to+\infty$ we have \[
u(t_{n_k},\cdot+\widetilde{x}_n(t_{n_k}))-\rho_n(t_{n_k})\varphi\rightharpoonup0 \ \hbox{ in }\ H^1.
\]
Since this is the only possible limit we conclude that as $t\to+\infty$ we have \begin{align}\label{weak_conv_H1_peakon}
u(t,\cdot+\widetilde{x}_n(t))-\rho_n(t)\varphi\rightharpoonup 0 \  \hbox{ in } \ H^1 \ \ \hbox{ and } \ \ u(t,\cdot+\widetilde{x}_n(t))-\rho_n(t)\varphi\to 0 \  \hbox{ in } \ H^{1}_{loc}.
\end{align}
Now, we claim that the latter convergence result implies that for any fixed $A>0$, as $t\to+\infty$, the following convergence holds: \begin{align}\label{local_strong_h1}
u(t,\cdot+\widetilde{x}_n(t))-\rho_n(t)\varphi\to0  \ \hbox{ in } \ H^1((-A,\infty)).
\end{align}
In fact, let $\delta>0$ be fixed and consider $R\gg1$ sufficiently large such that \[
\mathcal{J}_r^R\big(u(0,\cdot+\widetilde{x}_n(0)\big)<\delta \quad  \hbox{ and } \quad Ce^{-R/6}<\delta,
\] 
where $C>0$ is the constant involved in \eqref{ineq_J_r}. Then, from the almost decay of the energy at the right \eqref{ineq_J_r} we infer that \[
\mathcal{J}_r^R\big(u(t,\cdot+\widetilde{x}_n(t))\big)<2\delta, \ \hbox{ for all } \, t\in\R.
\]
Nevertheless, the latter inequality together with the local strong convergence in $H^1$ given in \eqref{local_strong_h1} immediately implies that, for any $A>0$ we have \begin{align}\label{H1_conv_right}
u(t,\cdot+\widetilde{x}_n(t))-\rho_n(t)\varphi\xrightarrow{t\to+\infty}0 \ \hbox{ in } \ H^1((-A,\infty)).
\end{align}
The sketch of the proof is complete.
\end{proof}

\textbf{Important:} Notice that in the same fashion as in \eqref{weak_strong_conv}, following the same lines in Proposition $4.2$ in \cite{Pa} and by using the rigidity Theorem we deduce that for any increasing sequence $t_n\to+\infty$ the exists a subsequence $t_{n_k}$ and positive numbers $c_1^*,...,c_n^*$ such that \begin{align}
u\big(t_{n_k},\cdot+\widetilde{x}_i(t_{n_k})\big)-\varphi_{c_i^*}\to 0 \ \hbox{ in } \ H^1_{loc}(\R) \ \hbox{ as } \ k\to+\infty,
\end{align} 
Now, before going further let us introduce some notation. From now on and for $i=1,...,n$ we shall denote by $W_i$ and $w_i$ the functions defined by
\begin{align}\label{def_W_w_i}
W_i:=\sum_{j=i}^n \sqrt{c_j^*}\varphi(\cdot-\widetilde{x}_j(t))\quad \hbox{and}\quad w_i:=u-W_i.
\end{align}
Additionally, we define the following modified energy functional \[
\widetilde{\mathcal{I}}(t):=\int\left(u^2+u_x^2\right)(t,\cdot)\Psi\big(\cdot-y_n(t)\big)dx,
\]
which corresponds to $\mathrm{I}_n^{y_n(t_{R}^n)-x_n(t_R^n)}(t)$ by redefining $z_n(\cdot)$ as $y_n(\cdot)$ in Lemma \ref{tech_lem_mon_exp}, where $\{y_i\}_{i=1}^n$ are defined in Lemma \ref{mod_lemma} and we change $y_1(t):=\tfrac{\beta}{2}$. Now, notice that for $t\geq t_R^n$ we have that $x_n(t_R^n)-y_n(t_R^n)\geq R\geq R_0$, and hence, by the same proof as in Lemma \ref{tech_lem_mon_exp} we deduce that $\widetilde{\mathcal{I}}$ is almost non-increasing for $t\geq t_R$. 

\medskip

The following two lemmas about the convergences of $\rho_i(t)$ and $\dot{\widetilde{x}}_i(t)$ in the case of the fastest peakon (i.e. $i=n$) follow the same lines as the ones for the single peakon case (c.f. \cite{Pa}, see also \cite{Mo}). However, for the sake of completeness we prove it anyway.

\begin{lem}\label{lem_convergence_rho_n}
As $t$ goes to $+\infty$ the following convergence holds:
\[
\rho_n(t)\to \sqrt{c_n^*}.
\]
In particular, as a consequence of the previous convergence, as $t$ goes to $+\infty$ we also have:
\[
\int \left(\big(u^2-\sqrt{c_n^*}\varphi(\cdot-\widetilde{x}_n(t))\big)^2+\big(u_x^2-\sqrt{c_n^*}\varphi'(\cdot-\widetilde{x}_n(t))\big)^2\right)\Psi\big(\cdot-y_n(t)\big)\to 0.
\]
\end{lem}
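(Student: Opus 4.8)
The plan is to upgrade the subsequential convergence $\rho_n(t_{n_k})\to\sqrt{c_n^*}$ to a full limit by showing that the almost non-increasing functional $\widetilde{\mathcal{I}}(t)$ coincides, up to an $o(1)$ error, with $2\rho_n(t)^2$. Since it was already noted that $\widetilde{\mathcal{I}}$ is almost non-increasing for $t\geq t_R^n$, it admits a limit $\ell:=\lim_{t\to+\infty}\widetilde{\mathcal{I}}(t)$. The key reduction I would carry out is the identity
\[
\widetilde{\mathcal{I}}(t)=2\rho_n(t)^2+o(1)\quad (t\to+\infty),
\]
which immediately forces $\rho_n(t)^2\to\ell/2$, so that $\rho_n(t)$ has a \emph{full} limit. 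On the other hand, subtracting \eqref{conv_two} from the strong local convergence $u(t_{n_k},\cdot+\widetilde{x}_n(t_{n_k}))\to\varphi_{c_n^*}$ of the Important remark gives $\rho_n(t_{n_k})\varphi\to\varphi_{c_n^*}$ in $H^1_{loc}\hookrightarrow C_{loc}$; evaluating at $x=0$ (where $\varphi(0)=1$ and $\varphi_{c_n^*}(0)=\sqrt{c_n^*}$) yields $\rho_n(t_{n_k})\to\sqrt{c_n^*}$. Matching the subsequential value with the full limit gives $\ell/2=c_n^*$, hence $\rho_n(t)\to\sqrt{c_n^*}$, and as a byproduct this shows the value $c_n^*$ is independent of the chosen subsequence.

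To establish the displayed identity I would work in the frame centered at $\widetilde{x}_n(t)$ and split the integral defining $\widetilde{\mathcal{I}}(t)$ at the point $\widetilde{x}_n(t)-A$ for a large but fixed $A>0$. Writing $b_n(t):=\widetilde{x}_n(t)-y_n(t)=\tfrac12(\widetilde{x}_n(t)-\widetilde{x}_{n-1}(t))$, the second estimate in \eqref{mod_parameter_bound} gives $b_n(t)\to+\infty$, so on $\{x>\widetilde{x}_n(t)-A\}$ the weight $\Psi(\cdot-y_n(t))$ equals $1+O(e^{-(b_n-A)/6})=1+o(1)$. On this region the strong convergence \eqref{conv_two} lets me replace $u(t,\cdot)$ by $\rho_n(t)\varphi(\cdot-\widetilde{x}_n(t))$ up to an $H^1$ error tending to $0$, so this piece contributes $2\rho_n(t)^2\bigl(1-O(e^{-2A})\bigr)+o(1)$. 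It then remains to show that the energy collected by $\Psi(\cdot-y_n(t))$ on the intermediate strip $\{y_n(t)<x<\widetilde{x}_n(t)-A\}$ is negligible; this is where the almost-monotonicity machinery of Lemmas \ref{tech_lem_mon_exp} and \ref{tech_lem_left} enters, since this strip contains no peakon center (all $\widetilde{x}_i$ with $i<n$ lie to the left of $y_n(t)$, by \eqref{mod_parameter_bound}) and the energy there cannot concentrate. Letting first $t\to+\infty$ and then $A\to+\infty$ yields the identity.

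For the \emph{in particular} statement I would expand the weighted square
\[
\int\Bigl((u-\sqrt{c_n^*}\varphi(\cdot-\widetilde{x}_n))^2+(u_x-\sqrt{c_n^*}\varphi'(\cdot-\widetilde{x}_n))^2\Bigr)\Psi(\cdot-y_n)
\]
into its three contributions. The pure-$u$ term is $\widetilde{\mathcal{I}}(t)$, which by the identity above and $\rho_n(t)\to\sqrt{c_n^*}$ tends to $2c_n^*$; the pure-peakon term equals $c_n^*\int(\varphi^2+\varphi'^2)(x)\Psi(x+b_n(t))\,dx\to 2c_n^*$ by dominated convergence since $\Psi(\cdot+b_n)\to1$; and the cross term, handled again via \eqref{conv_two}, $\rho_n(t)\to\sqrt{c_n^*}$ and the exponential localization of $\varphi,\varphi'$, tends to $-4c_n^*$. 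The three limits sum to $0$, which is the claim.

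The main obstacle is the intermediate-strip estimate: proving that no fixed amount of energy lingers between the $(n-1)$-th and $n$-th bumps. Unlike the single-peakon case, here one must genuinely use that the front $y_n(t)$ separates from $\widetilde{x}_n(t)$ at a linear rate and that the almost-monotonicity holds simultaneously for the nested weights, so that the energy captured to the right of $y_n(t)$ but not absorbed by the $n$-th peakon is squeezed to $0$. Everything else is routine $\varepsilon$--$A$--$t$ bookkeeping built on \eqref{conv_two}, \eqref{strong_h1}, and the convergence of $\widetilde{\mathcal{I}}$.
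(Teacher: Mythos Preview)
Your overall plan is sound in spirit, and your treatment of the ``in particular'' statement by expanding the weighted square is fine once the first part is available. However, your choice of functional for the first part creates an obstacle that the paper's argument avoids entirely, and the step you flag as the main obstacle is in fact a genuine gap.

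The paper does not use $\widetilde{\mathcal{I}}(t)$ (weight centered at $y_n(t)$) to prove $\rho_n\to\sqrt{c_n^*}$. It uses instead the functional $\int(u^2+u_x^2)(t)\,\Psi(\cdot-\widetilde{x}_n(t)+R)$ with a \emph{fixed} $R$. Because this weight sits at bounded distance from $\widetilde{x}_n(t)$, the strong convergence \eqref{conv_two} on $(-A,+\infty)$ covers the \emph{entire} region where the weight is non-negligible, giving directly
\[
\int(u^2+u_x^2)(t)\,\Psi(\cdot-\widetilde{x}_n(t)+R)=\rho_n(t)^2E(\varphi)+o_t(1),
\]
with no intermediate strip. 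The almost non-increasing property coming from \eqref{ineq_J_l} plus energy conservation (error $Ce^{-R/6}$) then yields $\rho_n^2(t)E(\varphi)\leq\rho_n^2(t')E(\varphi)+3\epsilon$ for $t>t'$ large; since $\epsilon$ is arbitrary, $\rho_n$ has a full limit, which the subsequential argument identifies as $\sqrt{c_n^*}$.

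By contrast, your weight at $y_n(t)$ drifts to $-\infty$ in the frame of $\widetilde{x}_n(t)$ (since $b_n(t)\to+\infty$), so the region $\{y_n(t)<x<\widetilde{x}_n(t)-A\}$ is growing and \eqref{conv_two} says nothing there. Your justification that ``the energy there cannot concentrate'' because the strip contains no peakon center is not supplied by Lemmas \ref{tech_lem_mon_exp} and \ref{tech_lem_left}: those give almost monotonicity of nested localized energies, not smallness of their \emph{differences}. In fact, the vanishing of energy in such a strip is essentially the content of the ``in particular'' clause and of Step 5 in the paper's inductive argument, and in both places it is proved only \emph{after} the convergence $\rho_i\to\sqrt{c_i^*}$ is established. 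Using it to derive $\rho_n\to\sqrt{c_n^*}$ is therefore circular unless you provide an independent argument, which you have not. The clean fix is simply to switch to the fixed-$R$ functional; then the obstacle disappears and your outline goes through.
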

\begin{proof}
In fact, let $\epsilon>0$ arbitrarily small but fixed and consider $R\gg1$ sufficiently large such that $Ce^{-R/6}<\epsilon$. Then, by using \eqref{ineq_J_l} as well as the energy conservation we obtain that for all $t>t'>t_0^n$ we have \[
\int \big(u^2+u_x^2\big)(t)\Psi\big(x-\widetilde{x}_n(t)+R\big)\leq\epsilon+ \int \big(u^2+u_x^2\big)(t')\Psi\big(x-\widetilde{x}_n(t')+R\big).
\]
On the other hand, due to the strong convergence result \eqref{conv_two} and the exponential localization of both $\varphi$ and $\Psi$, we infer that there exists $t_0\gg1$ sufficiently large such that for all $t\geq t_0$ we have \[
\left\vert\int \big(u^2+u_x^2\big)(t)\Psi(x-\widetilde{x}_n(t)+R)-\rho_n^2(t)E(\varphi)\right\vert\leq\epsilon.
\]
Plugging the last two inequalities together we conclude that for any pair of times $(t,t')\in\R^2$ satisfying $t>t'>\max\{t_0,t_0^n\}$ we have \[
\rho_n^2(t)E(\varphi)\leq \rho_n^2(t')E(\varphi)+3\epsilon.
\]
Since $\epsilon>0$ was arbitrary, the latter inequality forces $\rho(t)$ to have a limit at $+\infty$ and thus to converge to \[
\lim_{t\to+\infty}\rho_n(t)=\sqrt{c_n^*}
\]
what finish the proof of the lemma.
\end{proof}

\begin{lem}\label{lema_convergence_x_dot_n}
As $t$ goes to $+\infty$ the following convergence holds $\dot{\widetilde{x}}_n(t)\to c_n^*$.
\end{lem}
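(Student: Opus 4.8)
The plan is to derive a \emph{modulation equation} for $\dot{\widetilde{x}}_n$ by differentiating the $n$-th orthogonality condition \eqref{mod_orthogonality} in time, and then to identify its limit using the strong local convergence already at hand. Writing $v:=u-\sum_{j=1}^n\varphi_{c_j}(\cdot-\widetilde{x}_j)$ and $\psi_n(t,\cdot):=(\rho_{n_0}*\varphi_{c_n}')(\cdot-\widetilde{x}_n(t))$, the relation \eqref{mod_orthogonality} reads $\int v\,\psi_n\,dx\equiv0$. Differentiating in time and using $v_t=u_t+\sum_j\dot{\widetilde{x}}_j\varphi_{c_j}'(\cdot-\widetilde{x}_j)$ together with $\partial_t\psi_n=-\dot{\widetilde{x}}_n(\rho_{n_0}*\varphi_{c_n}'')(\cdot-\widetilde{x}_n)$, I obtain
\[
0=\int u_t\,\psi_n\,dx+\sum_{j=1}^n\dot{\widetilde{x}}_j\int\varphi_{c_j}'(\cdot-\widetilde{x}_j)\,\psi_n\,dx-\dot{\widetilde{x}}_n\int v\,(\rho_{n_0}*\varphi_{c_n}'')(\cdot-\widetilde{x}_n)\,dx,
\]
which I analyze term by term as $t\to+\infty$.

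The elementary terms are handled first. In the sum, the off-diagonal contributions $j\neq n$ vanish exponentially, thanks to the separation $\widetilde{x}_n-\widetilde{x}_{n-1}\ge\tfrac34 L+\tfrac12(c_n-c_{n-1})t\to+\infty$ from \eqref{mod_parameter_bound} and the bound $|\dot{\widetilde{x}}_j-c_j|\le C\alpha^{1/2}$; the diagonal term is the fixed constant $c_n B$ with $B:=\int\varphi'\,(\rho_{n_0}*\varphi')\,dx$, which is nonzero precisely by the defining property \eqref{orth_cond_def}. For the last term I invoke Lemma \ref{convergence_result} (namely \eqref{conv_two}) and Lemma \ref{lem_convergence_rho_n}, which give $u(t,\cdot+\widetilde{x}_n(t))\to\sqrt{c_n^*}\,\varphi$ strongly in $H^1((-A,\infty))$ for every $A>0$, hence $v(t,\cdot+\widetilde{x}_n(t))\to(\sqrt{c_n^*}-\sqrt{c_n})\varphi$ locally; combined with the exponential localization of $\rho_{n_0}*\varphi_{c_n}''$ (which controls the uncontrolled left tail by a standard $O(e^{-A})$ estimate), this term tends to $(\sqrt{c_n^*}-\sqrt{c_n})\sqrt{c_n}\,D$, where $D:=\int\varphi\,(\rho_{n_0}*\varphi'')\,dx=-B$.

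The main term $\int u_t\,\psi_n$ is the crux, and the step I expect to be the real obstacle. Since $u_t$ contains $u^2u_x$ and the nonlocal term of \eqref{nov_eq_2} involving $u_{xx}$ (a measure for peakon-type profiles), I cannot take a strong limit of $u_t$ directly. Instead I rewrite $\int u_t\,\psi_n$ through the weak form of \eqref{nov_eq_2}: moving the convolution $p$ onto the smooth, exponentially localized test function $\psi_n$ and integrating by parts in the term $uu_xu_{xx}$ via $u_xu_{xx}=\tfrac12(u_x^2)_x$ reduces the whole expression to integrals of $u$ and $u_x$ against smooth localized weights. On these, the strong local $H^1$-convergence and the uniform $H^1$-bound allow passing to the limit, and the limit coincides with the value computed on the pure profile $\varphi_{c_n^*}$. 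Because $\varphi_{c_n^*}(x-c_n^* t)$ is an exact traveling-wave solution of \eqref{nov_eq_2}, its time derivative equals $-c_n^*$ times its slope, which yields $\int u_t\,\psi_n\to-(c_n^*)^{3/2}\sqrt{c_n}\,B$.

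Finally I substitute the three limits into the modulation identity and divide by $B\neq0$. Writing $\ell$ for the limit of $\dot{\widetilde{x}}_n$ along any convergent subsequence, the $\sqrt{c_n}$-weighted contributions telescope and leave $\ell\sqrt{c_n^*}=(c_n^*)^{3/2}$, that is $\ell=c_n^*$. Since \eqref{mod_parameter_bound} shows $\dot{\widetilde{x}}_n$ is bounded and every subsequential limit equals $c_n^*$, I conclude that $\dot{\widetilde{x}}_n(t)\to c_n^*$ as $t\to+\infty$, which proves the lemma.
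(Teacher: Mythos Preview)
Your argument is correct and complete; the algebra at the end indeed collapses to $\ell\sqrt{c_n^*}=(c_n^*)^{3/2}$, hence $\ell=c_n^*$, and the subsequence argument using the boundedness of $\dot{\widetilde{x}}_n$ from \eqref{mod_parameter_bound} finishes the proof.

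Your route differs from the paper's in one essential respect. The paper decomposes around the \emph{asymptotic} profiles $\varphi_{c_j^*}$, setting $w_1:=u-\sum_j\varphi_{c_j^*}(\cdot-\widetilde{x}_j)$; since $u(t,\cdot+\widetilde{x}_n)\to\varphi_{c_n^*}$ locally, this remainder satisfies $w_1\to 0$ in $H^1_{loc}$, so after writing the full evolution equation for $w_1$ (equation \eqref{mod_huge_eq_11}) and testing against $\omega_{n_0,x}^n$, every term except $(\dot{\widetilde{x}}_n-c_n^*)\langle\varphi_{c_n^*}',\omega_{n_0,x}^n\rangle$ vanishes and the conclusion is immediate. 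You instead keep the decomposition with the \emph{original} speeds $c_j$, so your remainder $v$ does \emph{not} vanish locally (it converges to $(\sqrt{c_n^*}-\sqrt{c_n})\varphi$); to compensate, you compute the limit of $\int u_t\psi_n$ directly by passing to the pure traveling wave and then cancel the residual $\sqrt{c_n}$-contributions algebraically. Your approach avoids the bookkeeping of \eqref{mod_huge_eq_11} at the cost of the final telescoping identity; the paper's choice of $c_j^*$ makes that identity automatic.

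Two small points. First, $B\neq 0$ does not follow from \eqref{orth_cond_def} as stated (that condition only says the map $y\mapsto\int\varphi(\cdot-y)(\rho_{n_0}*\varphi')$ vanishes iff $y=0$); rather, $B=\int\varphi'(\rho_{n_0}*\varphi')>0$ is exactly the derivative of that map at $y=0$, whose positivity is established in Step~4 of the proof of Lemma~\ref{mod_lemma}. Second, when you state that the last term ``tends to $(\sqrt{c_n^*}-\sqrt{c_n})\sqrt{c_n}\,D$'' you have dropped the prefactor $-\dot{\widetilde{x}}_n$; you silently restore it in the subsequential limit, but it would be cleaner to keep it throughout.
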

\begin{proof}
First of all, let us start by recalling and introducing some notation:  \[
w_1(t)=u-\sum_{j=1}^n\varphi_{c_j^*}(\cdot-\widetilde{x}_j(t)), \ \ \omega_i:=\varphi_{c_i^*}(\cdot-\widetilde{x}_i(t)) \  \hbox{ and } \ \omega_{n_0}^i:=(\rho_{n_0}*\varphi_{c_i^*})(\cdot-\widetilde{x}_i(t)).
\]
Now, on the one-hand, by differentiating the $n$-th equation in  \eqref{mod_orthogonality} with respect to time and by using that $\varphi$ satisfies the equation $\varphi-\varphi''=2\delta$ we obtain \[
\int w_{1,t}\omega_{n_0,x}^n=\dot{\widetilde{x}}_n\int w_1(t,x)\omega_{n_0}^n(t,x)dx-2\sqrt{c_n^*}\dot{\widetilde{x}}_n\int w_1(t,x)\rho_{n_0}\big(x-\widetilde{x}_n(t)\big)dx.
\]
On the other hand, by using that $\varphi$ solves \eqref{nov_eq_2} we infer that each $\omega_i(t,x)$ satisfy the following equation:
\[
\omega_{i,t}+(\dot{\widetilde{x}}_i-c_i^*)\omega_{i,x}+\omega_i^2\omega_{i,x}=p_x*\Big(\omega_i^3+\dfrac{3}{2}\omega_i\omega_{i,x}^2\Big)-\dfrac{1}{2}p*\omega_{i,x}^3
\]
Therefore, using that $u(t)$ also solves \eqref{nov_eq_2}, by replacing $u=w_1+\sum_{i=1}^n\omega_i$ and then using the equation satisfied by each $\omega_i$ we obtain
%\begin{align*}
%v_t+w_t+(v+w)^2(v_x+w_x)=-\dfrac{1}{2}p*\big(v_x+w_x\big)^3+p_x*\Big((v+w)^3+\dfrac{3}{2}(v+w)(v_x+w_x)^2\Big).
%\end{align*}
\begin{align}\label{mod_huge_eq_11}
w_{1,t}-\sum_{j=1}^n(\dot{\widetilde{x}}_j-c_j^*)\omega_{j,x}&=-\left(w_1+\sum_{j=1}^n\omega_j\right)^2w_{1,x}-w_1^2\sum_{j=1}^n\omega_{j,x}-2w_1\sum_{j,k=1}^n\omega_j\omega_{k,x}\nonumber
\\ & \quad -\sum_{\substack{j,k,\ell=1 \\ (k,\ell)\neq(j,j)}}^n\omega_j\omega_k\omega_{\ell,x}-p_x*w_1^3-3\sum_{j=1}^np_x*(w_1^2\omega_j)\nonumber
\\ &  \quad  -3\sum_{j,k=1}^np_x*w_1\omega_j\omega_k-\sum_{\substack{j,k,\ell=1 \\ (k,\ell)\neq(j,j)}}^np_x*\omega_j\omega_k\omega_\ell \nonumber
\\ & \quad -\dfrac{3}{2}p_xw_1w_{1,x}^2-3\sum_{j=1}^np*w_1w_{1,x}\omega_{j,x}-\dfrac{3}{2}\sum_{j,k=1}^np*w_1\omega_{j,x}\omega_{k,x}\nonumber
\\ & \quad -\dfrac{3}{2}\sum_{j=1}^np*w_{1,x}^2\omega_j-3\sum_{j,k=1}^np*w_{1,x}\omega_j\omega_{k,x}\nonumber
\\ & \quad -\dfrac{3}{2}\sum_{\substack{j,k,\ell=1 \\ (k,\ell)\neq(j,j)}}^np_x*\omega_j\omega_{k,x}\omega_{\ell,x}-\dfrac{1}{2}p*w_{1,x}^3-\dfrac{3}{2}\sum_{j=1}^np*w_{1,x}^2\omega_{j,x}\nonumber
\\ & \quad -\dfrac{3}{2}\sum_{j,k=1}^n p*w_{1,x}\omega_{j,x}\omega_{k,x}-\dfrac{1}{2}\sum_{\substack{j,k,\ell=1 \\ (k,\ell)\neq(j,j)}}^n p*\omega_{j,x}\omega_{k,x}\omega_{\ell,x}.
\end{align}
%\begin{align}\label{mod_huge_eq}
%w_{n,t}-(\dot{\widetilde{x}}_n-c_n^*)W_{n,x}&=-(w_n+W_n)^2W_{n,x}-(w_n^2+2w_nW_n)W_{n,x}\nonumber
%\\ & \quad -\dfrac{1}{2}p*\big(w_{n,x}^3+3w_{n,x}^2W_{n,x}+3w_{n,x}W_{n,x}^2\big)\nonumber
%\\ & \quad +p_x*\Big(w_n^3+3w_n^2W_n+3w_nW_n^2+\dfrac{3}{2}w_n(w_{n,x}+W_{n,x})^2\nonumber
%\\ & \quad +\dfrac{3}{2}w_{n,x}^2W_n+2w_{n,x}W_nW_{n,x}\Big).
%\end{align}
On the other hand, notice that due to \eqref{conv_two}, inequality \eqref{mod_parameter_bound} and the exponential decay of both $\omega_i$ and $\omega_{n_0}^n$ we deduce that $\Vert \omega_i \omega_{n_0}^n\Vert_{L^1}+\Vert \omega_{i,x}\omega_{n_0}^n\Vert_{L^1}\to 0$ as $t\to+\infty$ for $i\neq n$ and% and the exponential decay of each $\omega_i$ and $\omega_{n_0}^n$ we infer that
\[
\Vert w_1^2\omega_{n_0,x}^n\Vert_{L^1}+\Vert w_{1,x}^2\omega_{n_0,x}^n\Vert_{L^1}+\int \vert w_1\omega_{n_0}^n\vert dx+\int \vert w_1\rho_{n_0}(x-x(t))\vert dx\to0 \ \hbox{ as }\ t\to+\infty.
\]
Therefore, by taking the $L^2$-inner product from equation \eqref{mod_huge_eq_11} against $\omega_{n_0,x}^n$ and noticing that $\langle \omega_{n,x}(t),\omega_{n_0,x}^n(t)\rangle_{L^2,L^2}\equiv \mathrm{constant}>0$ for all times $t\in\R$ we conclude \[
\dot{\widetilde{x}}_n-c_n^*\to 0 \ \hbox{ as }\ t\to+\infty.
\]
The proof is complete.
\end{proof}

Finally, it only remains to prove the analogous properties to Lemmas \ref{lem_convergence_rho_n} and \ref{lema_convergence_x_dot_n} for the cases $i=1,...,n-1$. This is the aim of the remaining part of this subsection. 

\medskip

\textbf{Inductive argument:}  Now we proceed by an inductive argument, that is, from now on we assume that for some $i^\star\in\{1,...,n-2\}$ it holds that \begin{align}\label{inductive_hypothesis}
&\int \left(u^2-\sum_{j=i^\star+1}^n\sqrt{c_j^*}\varphi(\cdot-\widetilde{x}_j(t))\right)^2\Psi\big(\cdot-y_{i^\star+1}(t)\big)dx\nonumber
\\ & \qquad +\int\left(u_x^2-\sum_{j=i^\star+1}^n\sqrt{c_j^*}\varphi'(\cdot-\widetilde{x}_j(t))\right)^2\Psi\big(\cdot-y_{i^\star+1}(t)\big)dx\to 0
\end{align}
and we intend to prove that, as $t$ goes to $+\infty$, this implies that $\dot{\widetilde{x}}_i(t)\to c_i^*$, $\rho_i(t)\to \sqrt{c_i^*}$ and  \begin{align}\label{inductive_conclusion}
&\int \left(u^2-\sum_{j=i^\star}^n\sqrt{c_j^*}\varphi(\cdot-\widetilde{x}_j(t))\right)^2\Psi\big(\cdot-y_{i^\star}(t)\big)dx\nonumber
\\ & \qquad +\int\left(u_x^2-\sum_{j=i^\star}^n\sqrt{c_j^*}\varphi'(\cdot-\widetilde{x}_j(t))\right)^2\Psi\big(\cdot-y_{i^\star}(t)\big)dx\to 0.
\end{align}
For the sake of simplicity we split the proof into $6$ steps where only the first five of them are devoted to prove the inductive argument. First, we start by proving some extra monotonicity property. The second step intends to state the analogous to the convergence result \eqref{conv_two} in the case $i\neq n$. Then, we prove the convergences of the scaling and velocity parameters. In step five we intend to conclude the inductive argument by proving \eqref{inductive_conclusion}. Finally, in the last step we are devoted to conclude the convergence result \eqref{MT_5_conclusions} on the first set in $\mathcal{A}_t$. For the sake of simplicity from now on we drop the superindex in $i^\star$ and hence we just denote it by $i$. 

\medskip

\textbf{Step 1:} Let $w_{i+1}:=u-W_{i+1}$ (see \eqref{def_W_w_i} for the definition of $\{W_i\}_{i=1}^n$). We claim that both functionals $\mathcal{J}_l^R\big(w_{i+1}(t,\cdot+\widetilde{x}_i(t)\big)$ and $\mathcal{J}_r^R\big(w_{i+1}(t,\cdot+\widetilde{x}_i(t)\big)$ enjoy the almost monotonicity properties \eqref{ineq_J_l}-\eqref{ineq_i_J_r} for $t\geq \tau^R_i\geq t_i^R$ where $\tau_i^R$ is a  sufficiently large parameter to be fixed. In fact, first of all notice that due to \eqref{mod_parameter_bound} and \eqref{inductive_hypothesis} we deduce that for every $\epsilon>0$ there exists $t_\epsilon^i\gg1$ sufficiently large such that for all $t\geq t_\epsilon^i$ we have  
\[
\left\vert \mathcal{J}_l^R\Big(u\big(t,\cdot+\widetilde{x}_i(t)\big)\Big)-\mathcal{J}_l^R\Big(w_{i+1}\big(t,\cdot+\widetilde{x}_i(t)\big)\Big)\right\vert\leq \epsilon,
\]
what proves the assertion for $\mathcal{J}_l^R$. Now, in order to deal with the second case we start by rewriting $\mathcal{J}_r^R$ as \begin{align*}
\mathcal{J}_r^R\big(u(t,\cdot+\widetilde{x}_i(t))\big)&=\int \big(u^2+u_x^2\big)\Psi\big(\cdot-\widetilde{x}_i(t)-R\big)\Big(1-\Psi\big(\cdot-y_{i+1}(t)\big)\Big)
\\ & \quad +\int \big(u^2+u_x^2\big)\Psi\big(\cdot-\widetilde{x}_i(t)-R\big)\Psi\big(\cdot-y_{i+1}(t)\big)=:\mathbf{I}+\mathbf{II}.
\end{align*}
Thus, on the one-hand, by using \eqref{mod_parameter_bound} again we have
\begin{align*}
\mathbf{I}(t)-\int \big(w_{i+1}^2+w_{i+1,x}^2\big)\Psi\big(\cdot-\widetilde{x}_i(t)-R\big)\Big(1-\Psi\big(\cdot-y_{i+1}(t)\big)\Big)\to 0 \ \hbox{ as } \ t\to+\infty,
\end{align*}
while on the other hand, by using the inductive hypothesis \eqref{inductive_hypothesis} together with \eqref{mod_parameter_bound}  we deduce that as $t$ goes to $+\infty$ we have
\begin{align*}
\mathbf{II}(t)-\int \big(w_{i+1}^2+w_{i+1,x}^2\big)\Psi\big(\cdot-\widetilde{x}_i(t)-R\big)\Psi\big(\cdot-y_{i+1}(t)\big)\to \sum_{j=i+1}^nE(\varphi_{c_j^*}).
\end{align*}
Therefore, by gathering both convergences we conclude the claim for $\tau_R^i\gg t_\epsilon^i$ sufficiently large.

\medskip

\textbf{Step 2:} Now we claim that for all $A>0$ the following strong convergence holds: \[
u\big(t,\cdot+\widetilde x_i(t)\big)-\rho_i(t)\varphi-W_{i+1}\big(t,\cdot+\widetilde{x}_i(t)\big)\to 0 \ \hbox{ in }\ H^1\big((-A,+\infty)\big) \ \hbox{ as }\ t\to+\infty.
\]
In fact, it is enough to recall that due to the inductive hypothesis \eqref{inductive_hypothesis} we have that for any $\epsilon>0$ arbitrarily small, there exists $t_\epsilon^i\gg1$ sufficiently large such that for all $t\geq t_\epsilon^i$ we have
\begin{align*}
\int \big(w_{i+1}^2+w_{i+1,x}^2\big)\big(t,x\big)\Psi\big(\cdot-y_{i+1}(t)\big)<\dfrac{\epsilon}{3}.
\end{align*}
Moreover, due to \eqref{mod_parameter_bound}, by making $t_\epsilon^i$ bigger if necessary we can also assume that for any $t\geq t_\epsilon^i$ it holds: \[
\int \big(\varphi^2+\varphi_x^2\big)\big(t,x\big)\Psi\big(\cdot-y_{i+1}(t)\big)<\dfrac{\epsilon}{3}.
\]
Therefore, by gathering the above inequalities together with the almost monotonicity result for $
\mathcal{J}_r^R\big(w_{i+1}(t,\cdot+\widetilde x_i(t))\big)$ with $R=y_{i+1}(t_\epsilon^i)+\widetilde x_i(t_\epsilon^i)$ and the strong convergence result \eqref{local_strong_conv}, we conclude that for all $A>0$ fixed we have \begin{align*}
u\big(t,\cdot+\widetilde x_i(t)\big)-\rho_i(t)\varphi-W_{i+1}\big(t,\cdot+\widetilde{x}_i(t)\big)\to 0 \ \hbox{ in }\ H^1\big((-A,+\infty)\big) \ \hbox{ as }\ t\to+\infty,
\end{align*}
which proves the claim.

\medskip

\textbf{Step 3:} Our aim now is to prove the convergence of the scaling parameter $\rho_i(t)$. In fact, first of all notice that due to \eqref{mod_parameter_bound}, the exponential decay of $\varphi$, $\varphi'$ and $\Psi$ and the latter strong convergence result in $H^1((-A,\infty))$ we deduce that for any $\delta>0$ there exists $R_\delta>1$ and $t_\delta^i>1$ sufficiently larges such that 
\[
\left\vert\int\big(w_{i+1}^2+w_{i+1,x}^2\big)(t,x)\Psi\big(x-\widetilde{x}_i(t)+R_\delta\big)dx-\rho_i(t)E\big(\varphi\big)\right\vert\leq \delta  \ \, \hbox{ for all } \, \ t\geq t_\delta^i.
\]
Then, the almost monotonicity of $\mathcal{J}_l^{R_\delta}(\cdot)$ implies that $\rho_i(t)\to \sqrt{c_i^*}$ as $t\to+\infty$ for some $c_i^*$ close to $c_i$. In fact, from the almost monotonicity of $\mathcal{J}_l^{R_\delta}\big(w_{i+1}(t,\cdot+\widetilde{x}_i(t))\big)$ and the latter inequality it follows that \[
\rho_i(t)E(\varphi)\leq \rho_i(t')E(\varphi)+3\delta, \quad \hbox{for all } \ t\geq t'\geq t_\delta^i.
\]
Since $\delta>0$ is arbritary, this forces $\rho(\cdot)$ to have a limit at $+\infty$, what ends the proof. Notice that, in particular, the following convergence holds\begin{align}\label{convergence_right}
u\big(t,\cdot+\widetilde x_i(t)\big)-\sqrt{c_i^*}\varphi-W_{i+1}\big(t,\cdot+\widetilde{x}_i(t)\big)\to 0 \ \hbox{ in }\ H^1\big((-A,+\infty)\big) \ \hbox{ as }\ t\to+\infty.
\end{align}

\smallskip

\textbf{Step 4:} Now we intend to prove that $\dot{\widetilde{x}_i}(t)\to c_i^*$ as $t\to+\infty$. We point out that the proof is somehow contained in the proof of Lemma \ref{lema_convergence_x_dot_n}. In fact, notice that by differentiating the $i$-th equation in  \eqref{mod_orthogonality} with respect to time and recalling that $\varphi$ satisfies $\varphi-\varphi''=2\delta$ we obtain \[
\int w_{1,t}\omega_{n_0,x}^i=\dot{\widetilde{x}}_i\int w_1(t,x)\omega_{n_0}^i(t,x)dx-2\dot{\widetilde{x}}_i\sqrt{c_i^*}\int w_1(t,x)\rho_{n_0}\big(x-\widetilde{x}_i(t)\big)dx.
\]
On the other hand, notice that due to \eqref{convergence_right}, inequality \eqref{mod_parameter_bound} and the exponential decay of both $\omega_i$ and $\omega_{n_0}^i$ we deduce that $\Vert \omega_j \omega_{n_0}^i\Vert_{L^1}+\Vert \omega_{j,x}\omega_{n_0}^i\Vert_{L^1}\to 0$ as $t\to+\infty$ for $j\neq i$ and 
\[
\Vert w_1^2\omega_{n_0,x}^i\Vert_{L^1}+\Vert w_{1,x}^2\omega_{n_0,x}^i\Vert_{L^1}+\Vert w_1\omega_{n_0}^i\Vert_{L^1}+\Vert w_1\rho_{n_0}(\cdot-\widetilde{x}_i(t))\Vert_{L^1}\to0 \ \hbox{ as }\ t\to+\infty.
\]
Therefore, by taking the $L^2$-inner product from equation \eqref{mod_huge_eq_11} against $\omega_{n_0,x}^i$ and noticing that $\langle \omega_{i,x}(t),\omega_{n_0,x}^i(t)\rangle_{L^2,L^2}\equiv \mathrm{constant}>0$ for all times $t\in\R$, we conclude \[
\dot{\widetilde{x}}_i-c_i^*\to 0 \ \hbox{ as }\ t\to+\infty.
\]

\textbf{Step 5:} Now we intend to conclude the inductive proof, which at the same time proof the convergence result  \eqref{MT_5_conclusions} on the second set in $\mathcal{A}_t$. In fact, first of all let us recall that from the previous steps we know that for any $A>0$ we have that as $t\to+\infty$ the following holds: \[
u\big(t,\cdot+\widetilde{x}_i(t)\big)-\varphi_{c_i^*}-W_{i+1}\big(t,\cdot+\widetilde{x}_i(t)\big)\to0 \, \hbox{ in }\, H^1((-A,\infty)).
\]
Now, let $\eta>0$ arbitrarily small but fixed. Let us consider $R\gg1$ sufficiently large such that \begin{align}\label{smallness_varphi_psi_proooof}
\Vert \varphi\Vert_{H^1\left(\left(-\infty,-\frac{R}{2}\right)\right)}^2<\eta \quad \hbox{and}\quad \Vert \Psi-1\Vert_{L^\infty\left(\left(\frac{R}{2},+\infty\right)\right)}<\eta,
\end{align}
Hence, by the previous convergence results we deduce the existence of a time point $t_0>0$ sufficiently large for which $\widetilde{x}_i(t_0)>R$ and such that for all $t\geq t_0$ we have \[
\left\Vert u\big(t,\cdot+\widetilde{x}_i(t)\big)-\varphi_{c^*_i}-W_{i+1}\big(t,\cdot+\widetilde{x}_i(t)\big)\right\Vert_{H^1\left(\left(-\frac{R}{2},+\infty\right)\right)}<\eta.
\]
Moreover, by making both $R$ and $t_0$ bigger if necessary we can also assume that for all $y\geq R$ and all $t\geq t_0$ it holds \[
\left\vert\sum_{j=i}^nE(\varphi_{c_j^*})-\int \big(W_i^2+W_{i,x}^2\big)\Psi\big(\cdot-\widetilde{x}_i(t)+y\big)dx\right\vert<\eta
\]
On the other hand, by using \eqref{smallness_varphi_psi_proooof}, inequality \eqref{mod_parameter_bound} and the previous inequalities we deduce that for all $y\geq R$ and all $t\geq t_0$ we have
\begin{align}\label{energy_bound}
\left\vert \sum_{j=i}^nE(\varphi_{c_j^*})-\int \left(u(t,\cdot+\widetilde{x}_i(t))W_{i}+u_x(t,\cdot+\widetilde{x}_i(t))W_{i,x}\right)\Psi(\cdot+y)\right\vert\lesssim \eta.
\end{align}
Now, for $j=2,...,n-1$, we consider the following velocities \[
z_1(t):=\tfrac{\beta}{2}t  \quad \hbox{and}\quad z_j(t):=\tfrac{3}{4}x_{j-1}(t)+\tfrac{1}{4}x_j(t).
\]
Now notice that, with this specific choice of velocities $z_i$, the functional $\mathrm{I}_{i,t_0}^{-R}$ defined in Lemma \ref{tech_lem_mon_exp} satisfies the almost monotonicity property. Thus, by using inequality \eqref{AM_right_i_energy}
we obtain that for all $t\geq t_0$ it holds
\begin{align*}
&\int \left(u^2+u_x^2\right)(t,\cdot)\Psi\left(\cdot -z_i^{-R}(t)\right)\leq Ce^{-R/24} +\int \left(u^2+u_x^2\right)(t_0,\cdot)\Psi\left(\cdot -z_i^{-R}(t_0)\right),
\end{align*}
where $z_i^{-R}(t)=\widetilde{x}_i(t_0)-R+z_i(t)-z_i(t_0)$. On the other hand, by straightforward computations we have
\begin{align*}
\int (w_i^2+w_{i,x}^2)(t,\cdot)\Psi\left(\cdot-z_i^{-R}(t)\right)&=\int \left(u^2+u_x^2\right)(t,\cdot)\Psi\left(\cdot -z_i^{-R}(t)\right)
\\ & \quad +\int \left(W_i^2+W_{i,x}^2\right)(t,\cdot)\Psi\left(\cdot -z_i^{-R}(t)\right)
\\ & \quad -2\int \left(uW_i+u_xW_{i,x}\right)(t,\cdot)\Psi\left(\cdot -z_i^{-R}(t)\right)
\\ & =:\mathrm{I}+\mathrm{II}+\mathrm{III}.
\end{align*}
Moreover, notice that due to \eqref{mod_parameter_bound} and the definition of $\{z_j\}_{j=1}^{n-1}$ we deduce that for all $t\geq t_0$ we have \[
\widetilde{x}_i(t)-\widetilde{x}_i(t_0)-z_i(t)+z_i(t_0)+R\geq R.
\]
Hence, by using inequality \eqref{energy_bound} and then the exponential decay of $\varphi$ we get
\begin{align*}
\mathrm{I}+\mathrm{II}+\mathrm{III}&\leq\int \left(u^2+u_x^2\right)(t_0,\cdot)\Psi\left(\cdot -\widetilde{x}_i(t_0)+R\right)+Ce^{-R/24}
\\ & \quad+\int \left(W_i^2+W_{i,x}^2\right)(t_0,\cdot)\Psi\left(\cdot -\widetilde{x}_i(t_0)+R\right)+Ce^{-R/24}
\\ & \quad -2\int \left(uW_i+u_xW_{i,x}\right)(t_0,\cdot)\Psi\left(\cdot -\widetilde{x}_i(t_0)+R\right)+C\eta
\\ & \lesssim \int \big(w_i^2+w_{i,x}^2)(t_0,\cdot)\Psi\left(\cdot-\widetilde{x}_i(t_0)+R\right)+e^{-R/24}+\eta
\\ & \lesssim \eta+e^{-R/24},
\end{align*}
where we have used the exponential decay of $\varphi$ to obtain the latter inequality. Finally, notice that by taking $R\gg1$ sufficiently large and $t_1>t_0$ such that for all $i=2,...,n-1$ and all $t\geq t_1$ it holds \[
z_1^{-R}(t)\leq \tfrac{\beta}{2} \quad \hbox{and} \quad z_i^{-R}(t)\leq y_i(t).
\]
Therefore, recalling that if $i=1$ we defined $y_1(t):=\tfrac{\beta}{2}$ (see the beginning of this section), we conclude that for all $t\geq t_1$ we have
\[
\int (w_{i}^2+w_{i,x}^2)(t,\cdot)\Psi\left(\cdot-y_i(t)\right)\lesssim \eta,
\]
which completes the proof the claim.

\medskip

\textbf{Step 6:}  Finally, it only remains to prove the convergence in $(-\infty,z)$ for any fixed $z\in\R$. This is a consequence of a more general property, noticed by Molinet in \cite{Mo3}, ensuring that all the energy of solutions associated to initial data in $Y_+$ is traveling to the right. In fact, we shall prove the following lemma which immediately conclude the proof of Theorem \ref{MT5}.
\begin{lem}[\cite{Pa}]\label{traveling_energy_lem}
For any $u_0\in Y_+$ and any $z\in\R$, the solution $u\in C(\R,H^1(\R))$ to equation \eqref{nov_eq_2} associated $u_0$ satisfies \begin{align*}
\lim_{t\to+\infty}\Vert u(t)\Vert_{H^1((-\infty,z))}=0.
\end{align*}
\end{lem}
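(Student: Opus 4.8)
The plan is to implement Molinet's observation (see \cite{Mo3}) that, for a solution with sign-definite momentum density, the $H^1$-energy is transported strictly to the right, and to turn this into a quantitative statement through an almost-monotonicity estimate in the same spirit as Lemmas \ref{AM_orb_train} and \ref{tech_lem_mon_exp}. First I would reduce to smooth solutions: approximating $u_0$ by a sequence in $Y_+(\R)$ that converges to it in $H^1(\R)$ and invoking the continuity statement of Theorem \ref{theorem_gwp}, it suffices to establish the conclusion for smooth $Y_+$ solutions and then pass to the limit. For such solutions I may freely use the three structural facts that the whole argument hinges on: the momentum $y=u-u_{xx}\geq 0$ is preserved by the flow, and consequently $u\geq 0$ and $\vert u_x\vert\leq u$ pointwise (as in \eqref{positive_mom_1}--\eqref{positive_mom_2}), together with the conservation of $E(u)=\int(u^2+u_x^2)$.

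The engine of the proof is the monotonicity of a left-localized energy along a slowly right-translating front. Fix the weight $\Psi$ of \eqref{psi_def_2}, a reference point $z'\in\R$, a small speed $v>0$ and a width $K\gg 1$, and set $\Phi(t,x):=1-\Psi\big((x-z'-vt)/K\big)$, so that $\int(u^2+u_x^2)\Phi$ measures the energy lying to the \emph{left} of the front $x=z'+vt$. Using the nonlocal form of \eqref{nov_eq_2}, I would compute
\[
\frac{d}{dt}\int (u^2+u_x^2)\,\Phi\,dx=\int 2u_t\big(y\,\Phi-u_x\,\Phi_x\big)\,dx+\frac{v}{K}\int (u^2+u_x^2)\,\Psi'\Big(\tfrac{x-z'-vt}{K}\Big)\,dx ,
\]
rearranging the transport contribution through the identity $u_xy=\partial_x\big(\tfrac12 u^2-\tfrac12 u_x^2\big)$ followed by an integration by parts, so that the main term concentrates on the support of $\Phi_x$, and bounding the nonlocal $p*$ and $p_x*$ terms by the $\Phi_x$-localized energy via the exponential kernel exactly as in the proof of Lemma \ref{AM_orb_train}. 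The goal of this computation is to show that, for $v$ small, the derivative is bounded above by an exponentially small quantity $O(e^{-c/K})$ uniformly in $t$, i.e. that the left-localized energy is almost non-increasing.

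Granting that estimate, the conclusion follows by a sweeping argument. Given the target point $z$ and any $\eta>0$, finiteness of $E(u_0)$ lets me choose $z'<z$ with $\int_{-\infty}^{z'}(u^2+u_x^2)(0)<\eta$; then the front $z'+vt$ passes $z$ at time $T_\ast\sim (z-z')/v$, and the almost-monotonicity gives $\int(u^2+u_x^2)(t)\,\Phi(t)\leq \int(u^2+u_x^2)(0)\,\Phi(0)+O(e^{-c/K})<\eta+O(e^{-c/K})$ for all $t\geq 0$. For $t\geq T_\ast+O(K)$ the front lies to the right of $z$, whence $\Vert u(t)\Vert_{H^1((-\infty,z))}^2\leq \int(u^2+u_x^2)(t)\,\Phi(t)<\eta+O(e^{-c/K})$; letting $\eta\to 0$ and $K\to\infty$ yields the claim.

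The hard part is the monotonicity step, and specifically the sign of the flux in front of $\Phi_x$. The purely transport-driven energy speed is $u^2$, which is only $\geq 0$ and may be arbitrarily small precisely where the energy density is small; a naive bound would therefore allow slow energy to be overtaken by the front and accumulate on the left. Ruling this out is exactly where the $Y_+$ inequalities $u\geq 0$ and $\vert u_x\vert\leq u$ must be used in full strength, to sign the nonlocal contributions and show that the effective flux dominates $v(u^2+u_x^2)$ on the transition region (a generic $H^1$ solution, admitting left-moving or stationary energy, would make the statement false). Thus the delicate point is not the passage to the limit but establishing that, for sign-definite momentum, no energy lingers behind an arbitrarily slow right-moving front.
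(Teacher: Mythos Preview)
Your plan has a genuine gap at exactly the point you flag as ``the hard part.'' For a front moving at a \emph{fixed} speed $v>0$, the almost-monotonicity of the left-localized energy simply does not hold for general $Y_+$ data. Following the computation in \eqref{dt_I_J_i}, the $Y_+$ inequalities $u\ge 0$, $|u_x|\le u$ together with the pointwise bound $p*(3uu_x^2+5u^3)\ge 2u^3$ (Lemma~3.7 of \cite{Pa}) yield
\[
J_1+J_2+J_3\;\ge\;\int u^2u_x^2\,\Psi' \;+\;\tfrac{4}{5}\int u^4\,\Psi',
\]
i.e.\ a flux that is \emph{quartic} in $u$. This does \emph{not} dominate the front contribution $v\int(u^2+u_x^2)\Psi'\le 2v\int u^2\Psi'$ when $u^2\ll v$ on the transition region. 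Concretely, a small bump of amplitude $\epsilon$ travels at speed $\sim\epsilon^2$; a front with $v=2\epsilon^2$ overtakes it and the left-energy increases by $\sim\epsilon^2$, which is not $O(e^{-c/K})$. So the inequality ``effective flux $\gtrsim v(u^2+u_x^2)$'' that your argument requires is false, and no choice of $v>0$ and $K$ rescues the sweep.

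The paper's proof avoids this by abandoning constant-speed fronts altogether. It tracks the \emph{level sets} of the right-localized energy: for each $\gamma\in(0,\|u_0\|_{H^1}^2)$ it defines $x_\gamma(t)$ by $\int(u^2+u_x^2)\Psi(\cdot-x_\gamma(t))=\gamma$ and shows, from the same quartic flux bound and Jensen, that $\dot x_\gamma(t)\ge\tfrac{2}{5}\int u^2\Psi'(\cdot-x_\gamma(t))>0$. This lower bound is allowed to degenerate, but if $x_\gamma(t)\to x_\gamma^\infty<\infty$ then the integrand must vanish along a time subsequence, forcing $u(t_n)\to 0$ in $L^\infty_{\mathrm{loc}}$; comparing with a second level $\gamma'>\gamma$ (whose curve $x_{\gamma'}$ is trapped near $x_\gamma^\infty$) then contradicts the definition of $x_\gamma^\infty$. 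The essential idea you are missing is this level-set/contradiction mechanism, which is what converts the merely \emph{positive} (but possibly tiny) rightward drift into $x_\gamma(t)\to+\infty$.
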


\begin{proof}[Proof of Lemma \ref{traveling_energy_lem}]
This lemma has already been proved for the Novikov equation in \cite{Pa}. However, for the sake of completeness we prove it again. First of all notice that, for $\Psi$ defined in \eqref{psi_def_2}, for any time $t\in\R$ fixed the map \[
z\mapsto\int \big(u^2+u_x^2\big)(t,x)\Psi(\cdot-z)dx,
\]
defines a decreasing continuous bijection from $\R$ into $(0,\Vert u_0\Vert_{H^1}^2)$. Hence, by setting any $0<\gamma<\Vert u_0\Vert_{H^1}^2$, we deduce that the map $x_\gamma:\R\to\R$ defined by the equation \begin{align}\label{def_xgamma}
\int \big(u^2+u_x^2\big)(t,x)\Psi(\cdot-x_\gamma(t))dx=\gamma,
\end{align}
is well-defined. Moreover, since $u\in C(\R,H^1(\R))$ we deduce that $x_\gamma$ is a continuous function. Now, notice that in order to conclude the proof of the lemma it is enough to show that for any $\gamma\in(0,\Vert u_0\Vert_{H^1}^2)$ we have \begin{align}\label{lim_final}
\lim_{t\to+\infty} x_\gamma(t)=+\infty.
\end{align}
For the sake of readability we split the proof of the latter property in two steps.

\medskip

\textbf{Step 1:} First we claim that for any $\Delta>0$ and any $t\in\R$ we have \begin{align}\label{claim_step}
x_\gamma(t+\Delta)-x_\gamma(t)\geq \dfrac{2}{5}\int_t^{t+\Delta}\int u^2(t,x)\Psi'(\cdot-x_\gamma(t))dx >0.
\end{align}
In fact, notice that by continuity with respect to the initial data it is enough to prove the claim for solutions $u\in C^\infty(\R,H^\infty(\R))\cap L^\infty(\R,H^1(\R))$. On the other hand, as an application of the Implicit Function Theorem we deduce that $x_\gamma(t)$ is of class $C^1$. In fact, let us define the functional \[
\psi(v,z):=\int \big(v^2+v_x^2\big)\Psi(\cdot-z)dx.
\]
Notice that $\psi$ clearly defines a $C^1$ function on $H^1(\R)\times \R$. Moreover, notice that since any function $v\in Y_+\setminus\{0\}$ cannot vanish at any point $x\in\R$, we deduce that for any function $v\in H^\infty\cap Y_+$ and any $z\in\R$ we have \[
\dfrac{\partial\psi}{\partial z}=\int \big(v^2+v_x^2\big)\Psi'(\cdot-z)>0.
\]
Thus, recalling equation \eqref{dt_I_J_i} from the proof of Lemma \ref{tech_lem_mon_exp}, we obtain 
\begin{align*}
\dot{x}_\gamma\int \big(u^2+u_x^2\big)\Psi'(\cdot-x_\gamma)&=\int u^2u_x^2\Psi'+\int \{p*(3uu_x^2+2u^3)\}u\Psi'+\int \{p_x*u_x^3\}u\Psi'.
\end{align*}
Now, due to the fact that $\vert v_x\vert\leq v$ for any $v\in Y_+$ we deduce $p*uu_x^2+p_x*u_x^3\geq 0$. On the other hand, since $u(t)$ is positive, from Lemma $3.7$ in \cite{Pa} we deduce
\[
p*(3uu_x^2+5u^3)\geq 2u^3 \ \hbox{ in particular } \ p*(2uu_x^2+2u^3)\geq \tfrac{4}{5}u^3.
\]
Hence, by using again that $\vert v_x\vert\leq v$ for any $v\in Y_+$ and the previous inequalities we get
\begin{align*}
2\dot{x}_\gamma\int u^2\Psi'(\cdot-x_\gamma)&\geq \int u^2u_x^2\Psi'+\dfrac{4}{5}\int u^4\Psi'.
\end{align*}
Therefore, due to the non-negativity of $\Psi'$ together with the fact that $\Vert \Psi'\Vert_{L^1}=1$ and by using H\"older's inequality we get 
\begin{align*}
\dot{x}_\gamma(t)\geq \dfrac{2}{5}\int u^2\Psi'(\cdot-x_\gamma(t))dx.
\end{align*}
Integrating in time between $t$ and $t+\Delta$ we conclude the claim.

\medskip

\textbf{Step 2:} Now we intend to conclude the proof of \eqref{lim_final}. First of all notice that from the claim of the previous step we obtain, in particular, that $x_\gamma(\cdot)$ is increasing and hence it has a limit $x_\gamma^\infty\in \R\cup\{+\infty\}$, i.e. \begin{align*}
\lim_{t\to+\infty}x_\gamma(t)=x_\gamma^\infty.
\end{align*}
Thus, the proof of \eqref{lim_final} is equivalent to prove that $x_\gamma^\infty=+\infty$. In fact, let us proceed by contradiction, i.e. let us suppose that $x_\gamma^\infty\in\R$. Then, notice that the latter hypothesis together with inequality \eqref{def_xgamma} and the fact that $\vert u_x\vert\leq u\leq \Vert u_0\Vert_{H^1}$ for all $(t,x)\in\R^2$ implies 
\begin{align}\label{contrad_2}
\lim_{t\to+\infty}\int\big(u^2+u_x^2\big)(t,x)\Psi(\cdot-x_\gamma(t))=\lim_{t\to+\infty}\int \big(u^2+u_x^2\big)(t,x)\Psi(\cdot-x_\gamma^\infty)=\gamma.
\end{align}
On the other hand, by taking $\Delta=1$, from \eqref{claim_step} and the convergence of $x_\gamma(t)$ we deduce 
\[
\lim_{t\to+\infty}\int_{t}^{t+1}\int u^2\Psi'(\cdot-x_\gamma(t))=\lim_{t\to+\infty}\int_t^{t+1}\int u^2\Psi'(\cdot-x_\gamma^\infty)=0.
\]
Notice that the latter equality together with the fact that $\vert v_x\vert\leq v$ for all $v\in Y_+$ implies, in particular, that there exists a sequence of times $t_n\to+\infty$ such that for any compact set $K\subset\R$ the following holds:
\begin{align}\label{zero_compact}
\lim_{n\to+\infty}\Vert u(t_n)\Vert_{L^\infty(K)}=0.
\end{align}
Now we choose any $\gamma<\gamma'<\Vert u_0\Vert_{H^1}$, arbitrary but fixed. Then, we consider the compact set \[
K:=[x^\infty_\gamma-M,x_\gamma^\infty+M],
\]
with $M\gg1$ sufficiently large such that $x_\gamma^\infty-M<x_{\gamma'}(0)$. Thus, by using \eqref{zero_compact}, the monotonicity of $t\mapsto x_{\gamma'}(t)$ and recalling that $x_{\gamma'}(0)<x_{\gamma}(0)$ we conclude \[
\lim_{n\to+\infty}\int \big(u^2+u_x^2\big)(t_n,x)\Psi(\cdot-x_{\gamma}^\infty)=\gamma'.
\]
However, this contradicts hypothesis \eqref{contrad_2} what ends the proof of the lemma.
\end{proof}

\medskip

\section{Appendix}

\subsection{Proof of Lemma \ref{mod_lemma}}\label{mod_appendix}

Let $\vec{z}=(z_1,...,z_n)\in\R^n$ be fixed and satisfying $z_i-z_{i-1}>L$. Consider the functionals given by the orthogonality conditions we are looking for, i.e., for each $i=1,...,n$ consider the functional given by 
\begin{align*}
Y_i(y_1,...,y_n,u):=\int \big(u-R_{\vec{z}-\vec{x}}\big)\partial_x(\rho_m*\varphi_{c_i})(\cdot-z_i-y_i)dx.
\end{align*}
Notice that each $
Y_i:\R^n\times H^1\to \R$ defines a $\mathcal{C}^1$ functional in a neighborhood of $(0,...,0,R_{\vec{z}})$. Moreover, for any $\vec{z}\in\R^n$ we have $Y_i(0,...,0,R_{\vec z})=0$. For the sake of simplicity, from now on we denote by $Y$ the functional given by \[
Y(y_1,...,y_n,u):=\big(Y_1(y_1,...,y_n,u),...,Y_n(y_1,...,y_n,u)\big).
\]
Now, notice that for each $i=1,...,n$ we have \[
\dfrac{\partial Y_i}{\partial y_i}=\int\Big(u_x-\sum_{j\neq i}^n\partial_x\varphi_{c_j}(\cdot-z_j-y_j)\Big)\partial_x(\rho_m*\varphi_{c_i})(\cdot-z_i-y_i)dx,
\]
and for each $j=1,...,n$ with $j\neq i$ we have \[
\dfrac{\partial Y_i}{\partial y_j}=\int \partial_{x}\varphi_{c_j}(\cdot-z_j-y_j)\partial_x(\rho_m*\varphi_{c_i})(\cdot-z_i-y_i)dx.
\]
In particular, notice that there exists a constant $\textbf{C}>0$ depending only on $m\in\N$ such that for all $i=1,...,n$ and all $\vec{z}\in\R^n$ we have
\[
\dfrac{\partial Y_i}{\partial y_i}(0,...,0,R_{\vec{z}})=\int  \varphi_{c_i}'(\cdot-z_i)(\rho_m'*\varphi_{c_i})(\cdot-z_i)= \textbf{C}c_i\geq \textbf{C}c_1.
\]
On the other hand, by using the exponential decay of $\varphi$ and due to the fact that $z_i-z_j>L$ whenever $i\neq j$, we infer that for $L_0$ large enough we have
\begin{align*}
\left\vert\dfrac{\partial Y_i}{\partial y_j}(0,...,0,R_{\vec{z}})\right\vert&=\left\vert\int\varphi_{c_j}(\cdot-z_j)(\rho_m''*\varphi_{c_i})(\cdot-z_i)\right\vert
\\ &\leq \left \vert \int \varphi_{c_j}(\cdot-z_j)(\rho_m*\varphi_{c_i})(\cdot-z_i)\right\vert+\left\vert\int\varphi_{c_j}(\cdot-z_j)\rho_m(\cdot-z_i)\right\vert
\\ & =O\left(e^{-L/4}\right).
\end{align*}
Hence, for $L\gg1$ large enough we deduce that $D_{\vec{x}}Y(0,...,0,R_{\vec{z}})=D+P$, where $D$ is an invertible diagonal matrix and \[
\Vert D^{-1}\Vert\leq \dfrac{1}{\textbf{C}c_1} \quad\hbox{and} \quad \Vert P\Vert\leq O\left(e^{-L/4}\right).
\]
Thus, there exists $L_0>1$ such that for all $L>L_0$ and all $\vec{z}\in\R^n$ satisfying $z_i-z_{i-1}>L$, the jacobian $D_{\vec{x}}Y(0,...,0,R_{\vec{z}})$ defines an invertible matrix. Hence, by using the Implicit Function Theorem we infer the existence of positive constants $\delta>0$, $C_0>0$ and $C^1$ functions $(y_1,...,y_n)$ defined in a $H^1$-neighborhood of $R_{\vec{z}}$ with values in a neighborhood of zero, i.e. \[
y_1,...,y_n:B_{H^1}(R_{\vec{z}},\delta)\to B_\R(0,C_0\delta),
\]
which are uniquely determined by the equation \[
Y\big(y_1(u),...,y_n(u),u\big)=0 \quad \hbox{for any } \ u\in B_{H^1}\big(R_{\vec{z}},\delta\big).
\]
In particular, there exists a constant $K_0>0$ such that if $u\in B_{H^1}\left(R_{\vec{z}},\delta_*\right)$ for some $0<\delta_*\leq\delta$, then \begin{align}\label{small_neigh}
\sum_{i=1}^n\vert y_i(u)\vert\leq K_0\delta_*.
\end{align}
It is worth noticing that $\delta$ and $K_0$ only depend on $c_1$ and $L_0$ but not on the point $\vec{z}\in\R$. Thus, for $u\in B_{H^1}(R_{\vec{z}},\delta)$ we can set $\widetilde{x}_i(u):=z_i+y_i(u)$. Hence, assuming that $\delta\leq \tfrac{L_0}{8K_0}$, we infer that $\widetilde{x}_1,...,\widetilde{x}_n$ are $C^1$ functions on $B_{H^1}(R_{\vec{z}},\delta_*)$ and satisfy \begin{align}\label{distances_ap}
\widetilde{x}_i(u)-\widetilde{x}_{i-1}(u)=z_i-z_{i-1}+y_i(u)-y_{i-1}(u)>\dfrac{L}{2}-2K_0\delta_*>\dfrac{L}{4}.
\end{align}
Now we intend to define the modulation of $u$. In fact, let us consider $\alpha_0<\tfrac{1}{2}\delta$ to be chosen later. Then, for all $L\geq L_0$ and any $0<\alpha<\alpha_0$, we define the modulation of $u$ in the following way: We cover the trajectory of $u$ by a finite number of open balls by: \[
\left\{u(t):\ t\in[0,t_0]\right\}\subset\bigcup_{k=1,...,N}B_{H^1}\left(R_{\vec{z}_k},2\alpha\right)
\] 
It is important to notice that, since $0<\alpha<\alpha_0<\tfrac{1}{2}\delta$, the functions $\widetilde{x}_i(u)$ are uniquely determined for \[
u\in B\left(R_{\vec{z}_k},2\alpha\right)\cap B\left(R_{\vec{z}_{k'}},2\alpha\right).
\] 
Therefore, we can define the functions $t\mapsto \widetilde{x}_i(t)$ on $[0,t_0]$ by settin $\widetilde{x}_i(t):=\widetilde{x}_i(u(t))$. Thus, by construction \begin{align}\label{ort_proof}
\int\Big(u(t,\cdot)-\sum_{j=1}^n\varphi_{c_j}\big(\cdot-\widetilde{x}_j(t)\big)\Big)\partial_x(\rho_m*\varphi_i)\big(\cdot-\widetilde{x}_i(t)\big)dx=0.
\end{align}
On the other hand, where $k$ is such that at time $t$ we have $u(t)\in B(R_{\vec{z}_k},2\alpha)$, by direct computation we get \begin{align*}
\left\Vert u(t)-\sum_{i=1}^n\varphi_{c_i}\big(\cdot-\widetilde{x}_i(t)\big)\right\Vert_{H^1}&\leq \left\Vert u(t)-\sum_{i=1}^n\varphi_{c_i}\big(\cdot-z_{i,k}\big)\right\Vert_{H^1}
\\ & \quad +\left\Vert\sum_{i=1}^n\big(\varphi_{c_i}\big(\cdot-\widetilde{x}_i(t)\big)-\varphi_{c_i}\big(\cdot-z_{i,k}\big)\big)\right\Vert_{H^1}
\\ & \leq 2\alpha+\Big(2\sum_{i=1}^nE(\varphi_{c_i})-2\sum_{i=1}^n\int \varphi_{c_i}(\cdot-\widetilde{x}_i(t))\varphi_{c_i}(\cdot-z_{i,k})
\\ & \quad -2\sum_{i=1}^n\int \varphi_{c_i}'(\cdot-\widetilde{x}_i(t))\varphi_{c_i}'(\cdot-z_{i,k})\Big)^{1/2} =:2\alpha+\mathrm{II}
\end{align*}
by using \eqref{small_neigh} , $E(\varphi_{c_i})=2c_i$ by integration by parts and recalling that $\varphi''=\varphi-2\delta$ we obtain \begin{align*}
\mathrm{II}&=2\sum_{i=1}^n\big(c_i-\varphi_{c_i}(z_{i,k}-\widetilde{x}_i(t)\big)^{1/2}= 2\sum_{i=1}^n\sqrt{c_i}\left(1-e^{-\vert z_{i,k}-\widetilde{x}_i(t)\vert}\right)^{1/2}
\\ & \leq 2\sum_{i=1}^n\sqrt{c_i}\left(1-e^{-2K_0\alpha}\right)^{1/2}\leq 2\sqrt{2K_0\alpha}\sum_{i=1}^n\sqrt{c_i}=O\left(\sqrt{\alpha}\right)
\end{align*}
Therefore, for $\alpha\ll1$ small enough we conclude that for all $t\in[0,t_0]$ it holds \begin{align}\label{small_mod_ap_proof}
\left\Vert u(t)-\sum_{i=1}^n\varphi_{c_i}\big(\cdot-\widetilde{x}_i(t)\big)\right\Vert_{H^1}=O\left(\sqrt{\alpha}\right).
\end{align}
Now, we split the prove of the remaining inequalities into four steps.

\medskip

\textbf{Step 1:} Now we intend to prove the first inequality in \eqref{mod_parameter_bound}. For the sake of simplicity let us start by defining some auxiliary variables: For each $i=1,...,n$ we define the functions $v,w_i,w_m^i$ as \[
v(t):=u(t)-\sum_{i=1}^n\varphi_{c_i}(\cdot-\widetilde{x}_i(t)),\quad w_i:=\varphi_{c_i}\big(\cdot-\widetilde{x}_i(t)\big) \ \hbox{ and } \ w_{m}^i(t):=(\rho_m*\varphi_{c_i})\big(\cdot-\widetilde{x}_i(t)\big).
\]
Then, by differentiating \eqref{ort_proof} and recalling that $\varphi-\varphi''=2\delta$ we obtain \begin{align*}
\left\vert \int v_t(t,x)w_{m,x}^i(t,x)dx\right\vert&=\left\vert\dot{\widetilde{x}}_i(t)\langle w_{m,xx}^i,v\rangle_{H^{-1},H^1}\right\vert\leq \left\vert\dot{\widetilde{x}}_i(t)\right\vert O\big(\Vert v(t)\Vert_{H^1}\big)
\\ & \leq \left\vert\dot{\widetilde{x}}_i(t)-c_i\right\vert O\big(\Vert v(t)\Vert_{H^1}\big)+O\big(\Vert v(t)\Vert_{H^1}\big).
\end{align*}
On the other hand, by using that $\varphi$ solves \eqref{nov_eq_2} we infer that each $w_i$ satisfies the following equation:
\[
w_{i,t}+\left(\dot{\widetilde{x}}_i-c_i\right)w_{i,x}+w_i^2w_{i,x}=p_x\left(w_i^3+\dfrac{3}{2}w_iw_{i,x}^2\right)-\dfrac{1}{2}p*w_{i,x}^3
\]
Thus, by using that $u(t)$ also solves \eqref{nov_eq_2}, replacing $u=v+w_1+...+w_n$ and then using the equation satisfied by each $w_i$ we get \begin{align}\label{mod_huge_eq_ap}
v_{t}-\sum_{j=1}^n(\dot{\widetilde{x}}_j-c_j)w_{j,x}&=-\left(v+\sum_{j=1}^nw_j\right)^2v_{x}-v^2\sum_{j=1}^nw_{j,x}-2v\sum_{j,k=1}^nw_jw_{k,x}\nonumber
\\ & \quad -\sum_{\substack{j,k,\ell=1 \\ (k,\ell)\neq(j,j)}}^nw_jw_kw_{\ell,x}-p_x*v^3-3\sum_{j=1}^np_x*(v^2w_j)\nonumber
\\ &  \quad  -3\sum_{j,k=1}^np_x*vw_jw_k-\sum_{\substack{j,k,\ell=1 \\ (k,\ell)\neq(j,j)}}^np_x*w_jw_kw_\ell \nonumber
\\ & \quad -\dfrac{3}{2}p_xvv_{x}^2-3\sum_{j=1}^np*vv_{x}w_{j,x}-\dfrac{3}{2}\sum_{j,k=1}^np*vw_{j,x}w_{k,x}\nonumber
\\ & \quad -\dfrac{3}{2}\sum_{j=1}^np*v_x^2w_j-3\sum_{j,k=1}^np*v_xw_jw_{k,x}\nonumber
\\ & \quad -\dfrac{3}{2}\sum_{\substack{j,k,\ell=1 \\ (k,\ell)\neq(j,j)}}^np_x*w_jw_{k,x}w_{\ell,x}-\dfrac{1}{2}p*v_x^3-\dfrac{3}{2}\sum_{j=1}^np*v_x^2w_{j,x}\nonumber
\\ & \quad -\dfrac{3}{2}\sum_{j,k=1}^n p*v_xw_{j,x}w_{k,x}-\dfrac{1}{2}\sum_{\substack{j,k,\ell=1 \\ (k,\ell)\neq(j,j)}}^n p*w_{j,x}w_{k,x}w_{\ell,x}.
\end{align}
On the other hand, notice that due to \eqref{small_mod_ap_proof}, inequality \eqref{distances_ap} and the exponential decay of both $w_i$ and $w_{n_0}^i$ we deduce that $\Vert w_j w_{m}^i\Vert_{L^1}+\Vert w_{j,x}w_{m}^i\Vert_{L^1}=O\left(\exp(-L/4)\right)$ for $j\neq i$ and 
\[
\Vert v^2w_{m,x}^i\Vert_{L^1}+\Vert v_{x}^2w_{m,x}^i\Vert_{L^1}+\Vert vw_{m}^i\Vert_{L^1}+\Vert v\rho_{m}(\cdot-\widetilde{x}_i(t))\Vert_{L^1}=O\left(\sqrt{\alpha}\right)+O\left(e^{-L/4}\right).
\]
Hence, by taking the $L^2$-inner product from equation \eqref{mod_huge_eq_ap} against $w_{m,x}^i$ and noticing that there exists a constant $\textbf{a}>0$ such that $\langle w_{i,x}(t),w_{m,x}^i(t)\rangle_{L^2,L^2}\equiv \textbf{a}c_i>0$ for all times $t\in[0,t_0]$, we obtain \[
\left\vert\dot{\widetilde{x}}_i-c_i\right\vert\Big(\textbf{a}c_i+O\left(\sqrt{\alpha}\right)\Big)=O\left(\sqrt{\alpha}\right)+O\left(e^{-L/4}\right).
\]
Therefore, by taking $0<\alpha_0\ll1$ small enough and $L_0\gg1$ sufficiently large we conclude the first inequality in \eqref{mod_parameter_bound}.

\medskip

\textbf{Step 2:} Now we intend to prove the second inequality in \eqref{mod_parameter_bound}. In fact,  it is enough to notice that from what we proved in the last step, by using \eqref{initial_cond_hyp_train} and \eqref{small_neigh}, after integration in time we obtain \[
\widetilde{x}_i(t)-\widetilde{x}_{i-1}(t)\geq L-2K_0\alpha_0+\dfrac{c_i-c_{i-1}}{2}t\geq \dfrac{3}{4}L+\dfrac{c_i-c_{i-1}}{2}t,
\]
what finish the proof of \eqref{mod_parameter_bound}.

\medskip

\textbf{Step 3:} In this step we are devoted to prove last part of the statement. In fact, notice that by using \eqref{small_mod_ap_proof} together with Sobolev's embedding we infer that for any time $t\in[0,t_0]$ we have \[
u(t,x)=R_{\widetilde{x}(t)}(x)+O\left(\sqrt{\alpha}\right).
\]
Now, on the one-hand notice that by applying the previous formula with $x(t):=\max_{J_i}u(t)$ and by using the second inequality in \eqref{mod_parameter_bound} we obtain \[
u(t,x_i)=\sqrt{c_i}+O\left(\sqrt{\alpha}\right)+O\left(e^{-L/4}\right)\geq \tfrac{2}{3}\sqrt{c_i}.
\]
On the other hand, notice that for any $x\in J_i\setminus[\widetilde{x}_i(t)-\tfrac{1}{12}L,\widetilde{x}_i(t)+\tfrac{1}{12}L]$ we have \[
u(t,x)\leq \sqrt{c_i}e^{-L/12}+O\left(\sqrt{\alpha}\right)+O\left(e^{-L/4}\right)\leq \tfrac{1}{2}\sqrt{c_i}.
\]
Therefore, the previous two inequalities ensures that $x_i(t)\in[\widetilde{x}_i(t)-\tfrac{1}{12}L,\widetilde{x}_i(t)+\tfrac{1}{12}L]$, what concludes the proof of the lemma.

\medskip

\textbf{Step 4:} Finally, it only remains to prove \eqref{orth_cond_def} for $n_0\in\N$ large enough. In fact, it is enough to notice that  \[
\int \varphi'(x)\varphi(x-y)dx=(1-y)e^{-y}.
\]
Thus, for $n_0\in\N$ large enough we have
\[
\dfrac{d}{dy}\int \varphi(\rho_{n_0}*\varphi)'(\cdot-y)=\int \varphi'(\rho_{n_0}*\varphi')(\cdot-y)\geq \dfrac{1}{4}e^{-1/2} \quad \hbox{on} \quad \left[-\tfrac{1}{2},\tfrac{1}{2}\right].
\]
Therefore, the mapping $y\mapsto \int_\R\varphi(\rho_{n_0}*\varphi)'(\cdot-y)$ is increasing on $[-\tfrac{1}{2},\tfrac{1}{2}]$, and hence there exists $n_0\in\N$ satisfying \eqref{orth_cond_def}. Then, we conclude the proof by choosing $m=n_0$. \qed

\medskip

\subsection{Proof of Lemma \ref{AM_orb_train}}\label{AM_orb_train_appendix}
The following computations can be rigorized by standard approximation and density arguments by considering, for instance, the convolution of $u_0$ with the mollifiers family $\rho_n$ defined in \eqref{def_rho} and by using the second statement in Theorem \ref{theorem_lwp}. We refer to \cite{EM1} for a complete justification of this argument. 

\medskip

Now, our aim is to prove inequality \eqref{AM_energy_orbital} by integrating its time derivative. Hence, by taking the time derivative directly from the definition of $\mathcal{I}_{i,K}(t)$ we obtain
\begin{align}
\dfrac{d}{dt}\mathcal{I}_{i,K}(t)&=2\int \big(uu_t+u_xu_{xt}\big)\Psi_{i,K}-\dot y_i(t)\int (u^2(t)+u_x^2(t)\big)\Psi_{i,K}'\nonumber
\\ & =:\mathrm{J}-\dot y_i(t)\int (u^2(t)+u_x^2(t)\big)\Psi_{i,K}'.\label{dt_I_thm}
\end{align}
By using both equations \eqref{novikov_eq} and \eqref{nov_eq_2} and by integrating by parts we get
\begin{align*}
\mathrm{J}&=2\int \big(u_t-u_{txx}\big)u\Psi_{i,K}-2\int uu_{tx}\Psi_{i,K}'
\\ & =2\int \big(3uu_xu_{xx}+u^2u_{xxx}-4u^2u_x\big)u\Psi_{i,K}
\\ & \quad +2\int (u^2u_{xx}+2uu_x^2+p_x*(3uu_xu_{xx}+2u_x^3+3u^2u_x))u\Psi_{i,K}'
\\ & =4\int u^2u_x^2\Psi_{i,K}'+2\int u^4\Psi_{i,K}'+2\int \{p_x*(3uu_xu_{xx}+2u_x^3+3u^2u_x)\}u\Psi_{i,K}'
\end{align*}
On the other hand, recalling that for any $L^2$ function $f:\R\to\R$ we have $p*f_x=p_x*f$, and by using that $p$ is the fundamental solution of $(1-\partial_x^2)$, we obtain
\begin{align*}
2p_x*(3uu_xu_{xx}+2u_x^3+3u^2u_x)=-2u^3-3uu_x^2+3p*uu_x^2+2p*u^3+p_x*u_x^3.
\end{align*}
Thus, by plugging this into \eqref{dt_I_thm} we get \begin{align}
\dfrac{d}{dt}\mathcal{I}_{i,K}(t)&=-\dot y_i(t)\int \big(u^2+u_x^2\big)\Psi_{i,K}'+\int u^2u_x^2\Psi_{i,K}'\nonumber
\\ & \qquad +\int \{p*(3uu_x^2+2u^3)\}u\Psi_{i,K}'+\int \{p_x*u_x^3\}u\Psi_{i,K}'\nonumber
\\ & =-\dot y_i(t)\int \big(u^2+u_x^2\big)\Psi_{i,K}'+\mathrm{J}_1+\mathrm{J}_2+\mathrm{J}_3.\label{dt_I_J_i} 
\end{align}
In order to bound $\mathrm{J}_i$, for $i=1,2,3$, we split $\R$ into two complementary regions related to the size of $u(t)$. In fact, for $i=2,...,n$ let us consider the family of time-dependent intervals $D_i$ \[
D_i(t):=\big[\widetilde{x}_{i-1}(t)+\tfrac{L}{4},\,\widetilde x_i(t)-\tfrac{L}{4}\big].
\]
Hence, with these definitions, by splitting the space into $D_i$ and $D_i^c$ we can rewrite $\mathrm{J}_1$ as \[
\mathrm{J}_1=\int_{D_i} u^2u_x^2\Psi_{i,K}'+\int_{D_i^c} u^2u_x^2\Psi_{i,K}'=:\mathrm{J}_1^1+\mathrm{J}_1^2.
\]
Now notice that, on the one-hand, by using \eqref{mod_bound} we deduce that for all $t\in[0,t^*]$ we have
\begin{align*}
\Vert u(t)\Vert_{L^\infty(D_i)}&\leq \sum_{i=1}^n\left\Vert \varphi_{c_i}\big(\cdot-\widetilde x_i(t)\big)\right\Vert_{L^\infty(D_i)}+\left\Vert u(t)-\sum_{i=1}^n\varphi_{c_i}\big(\cdot-\widetilde x_i(t)\big)\right\Vert_{L^\infty(D_i)}
\\ &\leq Ce^{-L/8}+O\big(\sqrt\alpha\big).
\end{align*}
Thus, for $\alpha>0$ small enough we can absorb $\mathrm{J}_1^1$ by the first integral term in \eqref{dt_I_J_i}. Now, on the other hand, by using the definition of the family $y_i$ in \eqref{def_y_i_intervals} and by using inequality \eqref{mod_parameter_bound}, we deduce
\[
\hbox{for any }\,x\in D_i^c \,\hbox{ we have } \ \vert x-y_i(t)\vert\geq \tfrac{1}{2}\big(\widetilde x_i(t)-\widetilde x_{i-1}(t)\big)-\tfrac{L}{4}\geq \tfrac{1}{2}(c_i-c_{i-1})t+\tfrac{L}{8}.
\]
Therefore, by using the exponential decay of $\Psi_{i,K}'$, we deduce that for all $t\in[0,t^*]$ we have \begin{align}\label{J_one_two_ap}
\mathrm{J}_1^2=\int_{D_i^c}u^2u_x^2\Psi'_{i,K}\leq \dfrac{C}{K}\Vert u_0\Vert_{H^1}^4e^{-\frac{1}{K}(\sigma_0 t+L/8)}.
\end{align}
Now in order to deal with $\mathrm{J}_2$ we proceed in a similar fashion. First, we split $\mathrm{J}_2$ into two different integrals by using the definition of $D_i$. In concrete, we define
\[
\mathrm{J}_2=\int_{D_i}\{p*(3uu_x^2+2u^3)\}u\Psi_{i,K}'+\int_{D_i^c}\{p*(3uu_x^2+2u^3)\}u\Psi_{i,K}'=:\mathrm{J}_2^1+\mathrm{J}_2^2.
\]
Now notice that in order to follow the previous procedure we need to deal with the self-adjoint operator $(p*\cdot)$. However, it is enough to notice that for $K>4$, by using the definition of $\Psi_{i,K}$ in \eqref{def_Psi_i_K} we immediately obtain
\[
(1-\partial_x^2)\Psi_{i,K}'\geq \left(1-\dfrac{10}{K^2}\right)\Psi_{i,K}', \ \,\hbox{ and hence } \ \, (1-\partial_x^2)^{-1}\Psi_{i,K}'\leq \left(1-\dfrac{10}{K^2}\right)^{-1}\Psi_{i,K}'.
\]
Thus, by using the previous estimate and proceeding in a similar fashion as before we obtain
\[
\mathrm{J}_2^1\lesssim \Vert u(t)\Vert_{L^\infty(D_i)}^2\int_{D_i}\big(u^2+u_x^2\big)(1-\partial_x^2)^{-1}\Psi_{i,K}'\lesssim \Vert u(t)\Vert_{L^\infty(D_i)}^2\int\big(u^2+u_x^2\big)\Psi_{i,K}'.
\]
Hence, for $\alpha>0$ small enough this term can be absorb by the first integral term in \eqref{dt_I_J_i}. Finally, by using the exponential decay of $\Psi_{i,K}'$ and the definition of $D_i$ we get
\[
\mathrm{J}_2^2=\int_{D_i^c}\{p*(3uu_x^2+2u^3)\}u\Psi_{i,K}'\leq \dfrac{C}{K}\Vert u_0\Vert_{H^1}^4e^{-\frac{1}{K}(\sigma_0t+L/8)}.
\]
The remaining term can be bound in exactly the same fashion. Therefore, gathering all the previous estimates we get \begin{align*}
\dfrac{d}{dt}\mathcal{I}_{i,K}(t)\leq -\dfrac{c_1}{4}\int \big(u^2+u_x^2\big)\Psi_{i,K}'+\dfrac{C}{K}\Vert u_0\Vert_{H^1}^4e^{-\frac{1}{K}(\sigma_0t+L/8)}.
\end{align*}
Integrating the previous inequality between $0$ and $t$ we conclude. The proof is complete. \qed

\medskip

\subsection{Proof of Lemma \ref{tech_lem_mon_exp}}\label{sec_ap_tech_lem}
For the sake of simplicity, we split the proof into three steps. The first of them is devoted to proof inequality \eqref{AM_right_n_energy} while the other two aim to prove \eqref{AM_right_i_energy}.

\medskip

\textbf{Step 1:}  First of all notice that by considering $R_0>0$ sufficiently large so that \begin{align}\label{R_condition}
nc_ne^{-R_0}<\dfrac{\sigma}{2^{18}},
\end{align}
and combining this inequality together with \eqref{orb_concl} and the definitions in \eqref{parameters}, we immediately deduce that \eqref{condition_tail} is satisfied. Now, we set $t_R^n$ to be \[
t_R^n:=\max\Big\{\{0\}\cup\big\{t\in t\geq 0: \ \widetilde{x}_n(t)-\widetilde{x}_{n-1}(t)=2R\big\}\Big\}.
\]
On the other hand, recall that from the proof of Lemma \ref{AM_orb_train} (c.f. \eqref{dt_I_J_i}) we have \begin{align}\label{derivative_ap}
\dfrac{d}{dt}\mathrm{I}_{n,t_0^n}^R=-\dot{z}_n^R(t)\int \big(u^2+u_x^2\big)\Psi'(\cdot-z_n^R(t))dx+\mathrm{J}_1+\mathrm{J}_2+\mathrm{J}_3.
\end{align}
Thus, by splitting the space into two regions: \[
\R=\big(-\infty,\widetilde{x}_n(t)+R_0\big]\cup\big[\widetilde{x}_n(t)+R_0,+\infty)=:D_1\cup D_2,
\]
we deduce that for any $x\leq \widetilde{x}_n(t)+R_0$ and any $t\leq t_0^n$ we have \[
x-z_n^R(t)\leq R_0-R-\delta_n(t_0^n-t), \ \, \hbox{ and hence } \ \ \mathrm{J}_1^1\leq \Vert u_0\Vert_{H^1}^4e^{\frac{1}{6}R_0-\frac{1}{6}R-\frac{1}{6}\delta_n(t_0^n-t)},
\]
where $\mathrm{J}_1^1$ is the portion of $\mathrm{J}_1$ associated to $D_1$. On the other hand, by using \eqref{condition_tail} and proceeding in the same fashion as in the proof of Lemma \ref{AM_orb_train} we deduce that $\mathrm{J}_1^2$ can be absorbed by the first integral term in \eqref{derivative_ap}. The remaining terms can be treated in exactly the same fashion. Therefore, by integration in time we conclude the first inequality in \eqref{AM_right_n_energy}.

\medskip

Now we intend to prove the second inequality in \eqref{AM_right_n_energy}. In fact, by using the first inequality in \eqref{mod_parameter_bound} we deduce that for all $R\geq R_0$ we have \[
\vert c_n-\dot{\widetilde{x}}_n(t)\vert+\vert c_{n-1}-\dot{\widetilde{x}}_{n-1}(t)\vert\leq \dfrac{1}{12}(c_n-c_{n-1}), \ \ \hbox{ for all } t\geq 0.
\]
Moreover, defining the time-dependent interval $\Pi_n(t):=(\tfrac{5}{6}\widetilde{x}_{n-1}(t)+\tfrac{1}{6}\widetilde{x}_n(t),\widetilde{x}_n(t)-R_0)$ we deduce from this choice of parameters that  \[
\Vert u(t)\Vert_{L^\infty(\Pi_n(t))}\leq \dfrac{(1-\delta_i)c_n}{\mathbf{b}}, \ \ \hbox{ for all } \ t\geq t_R^n.
\]
Thus, gathering the above information we deduce that for $x\leq \tfrac{5}{6}\widetilde{x}_{n-1}(t)+\tfrac{1}{6}\widetilde{x}_n(t)$ and all $t_0^n\geq t_R^n$ we have \begin{align*}
x-z_n^{-R}(t)&=x-\widetilde{x}_n(t)+R+(\widetilde{x}_n(t)-z_n(t))-\big(\widetilde{x}_n(t_0^n)-z_n(t_0^n)\big)
\\ & \leq -\tfrac{5}{6}\big(\widetilde{x}_n(t)-\widetilde{x}_{n-1}(t)\big)+R+\delta_nc_n(t-t_0^n)
\\ & \leq -\tfrac{5}{3}R-\tfrac{11}{12}(c_n-c_{n-1})(t-t_0^n)+R+\tfrac{5}{8}(c_n-c_{n-1})(t-t_0^n)
\\ & \leq -\dfrac{2}{3}R-\dfrac{1}{4}(c_n-c_{n-1})(t-t_0^n).
\end{align*}
Hence, proceeding in the same fashion as before, splitting the space into two regions \[
\R=\big(-\infty,\tfrac{5}{6}\widetilde{x}_{n-1}(t)+\tfrac{1}{6}\widetilde{x}_n(t)\big]\cup\big[\tfrac{5}{6}\widetilde{x}_{n-1}(t)+\tfrac{1}{6}\widetilde{x}_n(t),+\infty\big)=:D_1\cup D_2,
\]
we deduce that for any $x\leq \tfrac{5}{6}\widetilde{x}_{n-1}(t)+\tfrac{1}{6}\widetilde{x}_n(t)$ and all $t\geq t_0^n$ we have
\[
\Psi\big(x-z_n^{-R}(t)\big)\lesssim \exp\left(-\tfrac{R}{9}-\tfrac{1}{48}(c_n-c_{n-1})(t-t_0^n)\right),
\]
Therefore, proceeding exactly in the same fashion as before, for $k=1,2,3$ we can bound \[
\mathrm{J}_k^1\lesssim \Vert u_0\Vert_{H^1}^4 e^{-\frac{R}{9}-\frac{1}{48}(c_n-c_{n-1})(t-t_0^n)}\quad \hbox{and} \quad \mathrm{J}_k^2\leq \dfrac{c_n}{2^6}\int \big(u^2+u_x^2\big)\Psi'(\cdot-z_n^R)dx.
\]
Integrating in time we obtain the desired result. The proof is complete.

\medskip

\textbf{Step 2:} Our aim now is to prove \eqref{AM_right_i_energy} in the case $i=2,...,n$. In fact, it is enough to notice that, by defining $t_i^R$ to be 
\begin{align}\label{t_i_R_def}
t_i^R:=\max\Big\{\{0\}\cup \big\{t\geq 0: \ \widetilde{x}_i(t)-\widetilde{x}_{i-1}(t)=2R\big\}\Big\},
\end{align}
we deduce that for all $t\geq t_i^R$ \[
\Vert u(t)\Vert_{L^\infty(\Pi_i(t))}\leq \dfrac{(1-\delta_i)c_i}{2^6}, \quad \hbox{where} \quad \Pi_i(t):=\left(\tfrac{5}{6}\widetilde{x}_{i-1}(t)+\tfrac{1}{6}\widetilde{x}_i(t),\widetilde{x}_i(t)-R_0\right),
\]
what in light of step $1$ is enough to prove the desired result.

\medskip

\textbf{Step 3:} Finally, in the case $i=1$ it is enough to notice that for all $t\in\R$ we have 
\[
\Vert u(t)\Vert_{L^\infty((-\infty,\widetilde{x}_1(t)-R_0))}\leq \dfrac{(1-\delta_1)c_1}{2^6}.
\]
Again, in light of step $1$ we conclude the desired result by following the same procedure. The proof is complete. \qed

\medskip

\subsection{Proof of Lemma \ref{tech_lem_left}}\label{ap_tech_left}
Let $R$ be any positive real number and consider any $t_0\in\R$ satisfying  $t_0>t_R^{i+1}$. Now we set $\widetilde{t}_0$ being \[
\widetilde{t}_0:=\max\Big\{\left\{t^{i+1}_R\right\}\cup\left\{t\in[t^{i+1}_R,t_0]: \ \widetilde{x}_i(t_0)+R+\tfrac{3}{4}\big(\widetilde{x}_i(t)-\widetilde{x}_i(t_0)\big)=y_{i+1}(t)\right\}\Big\}.
\]
Hence, by definition of $\{y_i\}_{i=1}^n$ in \eqref{def_y_i_intervals} and the definition of $\widetilde{t}_0$ above, we immediately obtain that on $\left[\widetilde{t}_0,t_0\right]$ it holds
\[
\widetilde{x}_i(t_0)+R+\tfrac{3}{4}\big(\widetilde{x}_i(t)-\widetilde{x}_i(t_0)\big)\leq y_{i+1}(t).
\]
Now we set $z_{i}^R(t):=\widetilde{x}_i(t_0)+R+\tfrac{3}{4}\big(\widetilde{x}_i(t)-\widetilde{x}_i(t_0)\big)$. Thus, by the above inequality and by using \eqref{mod_parameter_bound} we obtain that for any $t\geq \widetilde{t}_0$ and any $x\geq \tfrac{3}{8}\widetilde{x}_i(t)+\tfrac{5}{8}\widetilde{x}_{i+1}(t)$ it holds \[
x-z_{i}^R(t)\geq x-y_{i+1}(t)\geq \tfrac{1}{8}\widetilde{x}_{i+1}(t)-\tfrac{1}{8}\widetilde{x}_i(t)\geq \tfrac{R}{4}+\tfrac{1}{2^4}(c_{i+1}-c_{i})\big(t-\widetilde{t}_0\big).
\]
As before, notice that the latter inequality lead us to
\[
\Psi\left(x-z_{i}^R(t)\right)\leq \exp\left(-\frac{R}{24}-\frac{(c_{i+1}-c_i)\big(t-\widetilde{t}_0\big)}{2^7}\right),
\]
which give us the main information needed to obtain \eqref{J_one_two_ap}. Now, for the sake of simplicity let us define $\Pi_i(t):=\big(\widetilde{x}_i(t)+R,\frac{3}{8}\widetilde{x}_i(t)+\frac{5}{8}\widetilde{x}_{i+1}(t)\big)$. Hence, by the definition of both $\varepsilon_0$ and $\sigma$ in \eqref{parameters} and by using \eqref{orb_concl} we obtain 
\[
\Vert u(t)\Vert_{L^\infty\left(\Pi_i(t)\right)}<\frac{c_i}{2^7} \quad \hbox{for all }\,t\geq \widetilde{t}_0.
\]
Thus, by defining the modified energy functional \begin{align}\label{def_modified_ap}
\mathcal{I}_{i}^R(t):=\int\big(u^2+u_x^2\big)(t,x)\Psi\big(\cdot-z_{i}^R(t)\big)dx,
\end{align}
and proceeding as in Lemmas \ref{AM_orb_train} and \ref{tech_lem_mon_exp}, we deduce that for all $t\in[\widetilde{t}_0,t_0]$
\[
\mathcal{I}_{i}^R(t_0)-\mathcal{I}_{i}^R(t)\leq Ce^{-R/24}.
\]
Therefore, by the same arguments as those at the middle of Section \ref{sec_six_one} we conclude \[
\mathcal{J}_{i,r}^R(t_0)\leq \mathcal{J}_{i,r}^R(t)+Ce^{-R/24},\quad \hbox{for all } \ \widetilde{t}_0\leq t\leq t_0.
\]
Hence, it only remains to prove that the latter inequality holds for $t_R^{i+1}\leq t\leq t_0$. First of all, notice that if $\widetilde{t}_0=t_R^{i+1}$ we are done. Otherwise, by definition of $\widetilde{t}_0$ and $z_i^R$ we must have $z_i^R\big(\widetilde{t}_0\big)=y_{i+1}\big(\widetilde{t}_0\big)$. Then, if this is the case, it is enough to notice that 
\[
\widetilde{x}_{i+1}(t_R^{i+1})-y_{i+1}(t_R^{i+1})=\tfrac{1}{2}\widetilde{x}_{i+1}(t_R^{i+1})-\tfrac{1}{2}\widetilde{x}_{i}(t_R^{i+1})\geq R.
\]
Thus, by replacing $\widetilde{x}_{i+1}(t_R^{i+1})-y_{i+1}(t_R^{i+1})$ instead of $R$ in \eqref{def_modified_ap} and by redefining $z_i$ to be equals to $z_i(t)=y_{i+1}(t)$ we obtain that for all $t\in[t_R^{i+1},\widetilde{t}_0]$ it holds:
\[
\int \big(u^2+u_x^2\big)\Psi(\cdot-y_{i+1}(\widetilde{t}_0)\big)\leq \int \big(u^2+u_x^2\big)\Psi(\cdot-y_{i+1}(t)\big)+Ce^{-R/10}.
\]
Finally, since $t\geq t_R^{i+1}$, we have \[
\int \big(u^2+u_x^2\big)\Psi(\cdot-y_{i+1}(t)\big)\leq \mathcal{J}_{i,r}^R(t),
\]
from where we obtain the desired result for $t\in [t_R^{i+1},t_0]$. The proof is complete.  \qed

\medskip

\textbf{Acknowledgements :} The author is very grateful to professor Luc Molinet for encouraging me in solving this problem and for many remarkably useful conversations.

\medskip

\end{document}